\newcommand{\intM}[1]{\int_{\Sigma}{{#1}d\mu}}
\newcommand{\llll}[1]{\left\|{#1}\right\|}
\newcommand{\oo}[1]{\left({#1}\right)}
\newcommand{\cc}[1]{\left[{#1}\right]}
\newcommand{\ben}{\begin{eqnarray*}}
\newcommand{\een}{\end{eqnarray*}}
\newcommand{\inner}[1]{\left\langle{#1}\right\rangle}
\newcommand{\norm}[1]{\left|{#1}\right|}
\newcommand{\intcurve}[1]{\int_{\gamma}{{#1}\,ds}}
\newcommand{\R}{\mathbb{R}}
\newcommand{\N}{\mathbb{N}}
\renewcommand{\S}{\mathbb{S}}
\newcommand{\vn}[1]{\lVert #1 \rVert}
\newcommand{\SL}{\mathcal{L}}
\theoremstyle{plain}
\newtheorem{theorem}{Theorem}
\newtheorem{corollary}[theorem]{Corollary}
\newtheorem{lemma}[theorem]{Lemma}
\newtheorem{proposition}[theorem]{Proposition}
\theoremstyle{remark}
\begin{document}

\date{Draft}

\title{The polyharmonic heat flow of closed plane curves}

\author[S. Parkins]{Scott Parkins}
\address{Institute for Mathematics and its Applications, School of Mathematics and Applied Statistics\\
University of Wollongong\\
Northfields Ave, Wollongong, NSW $2500$,\\
Australia}
\email{srp854@uow.edu.au}

\author[G. Wheeler]{Glen Wheeler}
\address{Institute for Mathematics and its Applications, School of Mathematics and Applied Statistics\\
University of Wollongong\\
Northfields Ave, Wollongong, NSW $2500$,\\
Australia}
\email{glenw@uow.edu.au}

\thanks{The research of the first author was supported by an Australian
Postgraduate Award.  The research of the second author was supported in part by
Discovery Projects DP120100097 and DP150100375 of the Australian Research Council.}

\subjclass{53C44}

\begin{abstract}
In this paper we consider the polyharmonic heat flow of a closed curve in the
plane.
Our main result is that closed initial data with initially small normalised
oscillation of curvature and isoperimetric defect flows exponentially fast in
the $C^\infty$-topology to a simple circle.
Our results yield a characterisation of the total amount of time during which
the flow is not strictly convex, quantifying in a sense the failure of the
maximum principle.
\end{abstract}

\maketitle


\begin{section}{Introduction}
Let $\gamma_0:\S\rightarrow\R^2$ be a smooth, closed regular immersed plane curve.
Let $p\in\N_0$.
A one-parameter family $\gamma:\S\times\left[0,T\right)\rightarrow\R^2$ satisfying
\begin{equation}
\frac{\partial}{\partial
t}\gamma=\oo{-1}^{p}\kappa_{s^{2p}}\nu\label{FlowDefinition}\tag{$PF_p$}
\end{equation}
is called the $(2p+2)$-th order polyharmonic heat flow of $\gamma_0$, or the
\emph{polyharmonic flow} for short.
Here $s$ is the regular Euclidean arc length
$s\oo{u}=\int_{0}^{u}{\norm{\gamma_{v}}\,du}$ and $\kappa_{s^{2p}}$ is
$2p$ derivatives of the Euclidean curvature $\kappa$ with respect to
arc length:
\[
\kappa_{s^{2p}} := \partial_s^{2p}\kappa := \frac{\partial^{2p}\kappa}{\partial
s^{2p}}.
\]
We take $\nu$ to be a unit normal vector field to $\gamma$ such that $\kappa\nu =
\partial_s^2\gamma$.

If we take $p=0$, then \eqref{FlowDefinition} is the well-studied \emph{curve
shortening flow} made famous by Hamilton, Gage and Grayson \cite{Hamilton2,grayson1989}:
\[
\partial_t\gamma = \kappa\nu\,.
\]
The curve shortening flow is second-order, and, being a nonlinear geometric
heat equation for the immersion $\gamma$, enjoys the maximum principle and its
standard variations (Harnack inequality, comparison/avoidance principles). This
allows for trademark characteristics such as moving immediately from weak
convexity to strong convexity, preservation of convexity, preservation of
embeddedness, and preservation of graphicality.

The curve shortening flow is the $W^{-0,2} = L^2$ gradient flow for length.
Taking the $H^{-1} = W^{-1,2}$ gradient flow for length yields the fourth order
flow termed the \emph{curve diffusion flow}, whose origins lie in material
science \cite{Mullins1}.
Its gradient flow structure was only later discovered by Fife \cite{Fife}.
The qualitative properties mentioned above for the curve shortening flow do not
hold for the curve diffusion flow (and in fact do not hold for any of the flows
\eqref{FlowDefinition} for $p>0$).
We refer the reader to \cite{Blatt2010,EllPaa2001,EscherIto2005,GI1998,GI1999}
for an overview of these interesting phenomena.
We additionally mention numerical examples contributed by Mayer
\cite{privatecommsmayer} of finite-time singularities arising from embedded
initial data (the resolution of this is an open conjecture that to our
knowledge is due to Giga \cite{privatecommsgiga}).

While local well-posedness belongs by now to standard theory (see for example
\cite{Baker,Mantegazza2}), global analysis and qualitative properties of the
flow remain largely unresolved.
Recently, there have been advances in understanding the stability of the curve
diffusion flow about circles, with work of Elliott-Garcke \cite{EG97}
strengthened by the second author in \cite{Wkosc}.
The result of \cite{Wkosc} relies on the blowup criterion discovered by
Dziuk-Kuwert-Sch\"atzle \cite{DKS}.
The core idea of \cite{Wkosc} is to analyse the normalised oscillation of
curvature:
\[
K_{osc}:=L\intcurve{\oo{\kappa-\bar{\kappa}}^{2}}\,.
\]
The key observation for the curve diffusion flow is that $K_{osc}$ is a natural
energy, being both integrable (in time) for any allowable initial data and
whose blowup characterises finite-time blowup in general.
In this article, we prove that $K_{osc}$ remains a natural energy for every
polyharmonic flow, regardless of how large $p$ is.

In the theorem below and for the remainder of the article we assume $p\in\N$.

\begin{theorem}
\label{TM1}
Suppose $\gamma:\mathbb{S}^{1}\times\left[0,T\right)\rightarrow\mathbb{R}^{2}$
solves $\eqref{FlowDefinition}$. Then there exists a constant
$\varepsilon_{0}>0$ depending only on $p$ such that if
\begin{equation}
\label{EQsmallness}
K_{osc}\oo{0}<\varepsilon_{0}\text{  and  }I\oo{0}<e^{\frac{\varepsilon_{0}}{8\pi^{2}}}
\end{equation}
then $\gamma\oo{\mathbb{S}^{1}}$ approaches a round circle exponentially fast
with radius $\sqrt{\frac{A\oo{\gamma_{0}}}{\pi}}$.
\end{theorem}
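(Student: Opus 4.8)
The plan is to treat $K_{osc}$ as the fundamental energy, in the spirit of Dziuk--Kuwert--Sch\"atzle \cite{DKS} and of \cite{Wkosc}. First I would derive the evolution equations induced by \eqref{FlowDefinition}: writing the normal speed as $F=\oo{-1}^{p}\kappa_{s^{2p}}$ one has $\partial_{t}\,ds=-\kappa F\,ds$, $\partial_{t}\kappa=\partial_{s}^{2}F+\kappa^{2}F$ and $\cc{\partial_{t},\partial_{s}}=\kappa F\,\partial_{s}$, and iterating gives $\partial_{t}\kappa_{s^{m}}=\oo{-1}^{p}\kappa_{s^{m+2p+2}}+P_{m}$ with $P_{m}$ a universal polynomial whose monomials $\kappa_{s^{j_{1}}}\cdots\kappa_{s^{j_{r}}}$ have $r\ge2$ and $\sum_{i}(j_{i}+1)=m+2p+3$. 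The zeroth-order consequences structure everything: since $p\ge1$ we have $\int\kappa_{s^{2p}}\,ds=0$, so the enclosed area $A$ is a constant of the flow; $\frac{d}{dt}L=-\int\kappa_{s^{p}}^{2}\,ds\le0$, so by the isoperimetric inequality and conservation of area $L\oo{t}\in\cc{\sqrt{4\pi A\oo{\gamma_{0}}},\,L\oo{\gamma_{0}}}$ and hence $\bar\kappa=2\pi/L$ is pinched between positive constants; and the hypothesis $I\oo{0}<e^{\varepsilon_{0}/(8\pi^{2})}$, together with smallness of $K_{osc}\oo{0}$, forces $\gamma_{0}$ to have winding number one --- a property preserved along the smooth regular flow, which both validates the identity $\bar\kappa=2\pi/L$ used below and makes the limiting circle simple.

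The core of the proof is a differential inequality $\frac{d}{dt}K_{osc}\le-c\,K_{osc}$, valid while $K_{osc}$ is small. Setting $w=\kappa-\bar\kappa$ and differentiating $K_{osc}=L\int\kappa^{2}\,ds-4\pi^{2}$, one finds after integration by parts
\[
\frac{d}{dt}K_{osc}
=-\,\vn{w}_{2}^{2}\vn{\kappa_{s^{p}}}_{2}^{2}
+2L\bigl(\bar\kappa^{2}\vn{\kappa_{s^{p}}}_{2}^{2}-\vn{\kappa_{s^{p+1}}}_{2}^{2}\bigr)
+\mathcal{R},
\]
where $\mathcal{R}$ collects integrals of monomials in $w$ and its arc-length derivatives of total derivative count $2p+2$ and $w$-degree at least three. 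Two estimates drive the inequality. First, in the Fourier basis $e^{2\pi iks/L}$ the bracket $\bar\kappa^{2}\vn{\kappa_{s^{p}}}_{2}^{2}-\vn{\kappa_{s^{p+1}}}_{2}^{2}$ vanishes identically on the $k=\pm1$ modes and is strictly negative with a spectral gap on the rest; since the closedness identity $\int_{\gamma}e^{i\theta}\,ds=0$ (with $\theta_{s}=\kappa$) forces the $k=\pm1$ Fourier coefficients of $w$ to be quadratically small in $\vn{w}_{2}$, the $|k|\ge2$ part of $\vn{w}_{2}^{2}$ is at least $\tfrac12\vn{w}_{2}^{2}$ once $K_{osc}$ is small, and iterated Wirtinger then bounds the bracket term's contribution by $-c(p)L^{-2p-2}K_{osc}-c(p)L\vn{\kappa_{s^{p+1}}}_{2}^{2}$ up to a term $c(p)L^{-2p-2}K_{osc}^{2}$. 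Second, $\mathcal{R}$ is estimated via the sharp Gagliardo--Nirenberg inequalities on closed curves in the \cite{DKS} form $\vn{\partial_{s}^{i}w}_{q}\le c\,\vn{\kappa_{s^{p+1}}}_{2}^{\theta}\vn{w}_{2}^{1-\theta}+c\,\vn{w}_{2}$ with dimensionally forced exponents, together with Wirtinger; since every term of $\mathcal{R}$ has $w$-degree at least three, a scaling count (and scale invariance of $K_{osc}$) gives $|\mathcal{R}|\le c(p)K_{osc}^{\delta}\bigl(L^{-2p-2}K_{osc}+L\vn{\kappa_{s^{p+1}}}_{2}^{2}\bigr)$ for some $\delta>0$. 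Taking $K_{osc}<\varepsilon_{1}(p)$ small enough absorbs $\mathcal{R}$ and the quadratic correction into the negative terms, leaving $\frac{d}{dt}K_{osc}\le-c(p,\gamma_{0})\,K_{osc}$.

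With this in hand the argument closes by a continuity/bootstrap scheme. Fix $\varepsilon_{0}(p)<\varepsilon_{1}(p)/2$, small enough also that the $I\oo{0}$ threshold is effective, and let $t^{\ast}$ be the supremum of times on which $K_{osc}<\varepsilon_{1}$. On $\cc{0,t^{\ast}}$ the inequality just proved forces $K_{osc}\oo{t}\le K_{osc}\oo{0}\,e^{-ct}<\varepsilon_{0}<\varepsilon_{1}/2$, so $t^{\ast}$ cannot be finite unless $t^{\ast}$ equals the maximal existence time $T$. But on $\cc{0,T}$ the bound on $K_{osc}$ (with those on $L,\bar\kappa$) feeds into the evolution equations for $\vn{\kappa_{s^{m}}}_{2}^{2}$: interpolating $P_{m}$ against the top-order term yields $\frac{d}{dt}\vn{\kappa_{s^{m}}}_{2}^{2}\le-\vn{\kappa_{s^{m+p+1}}}_{2}^{2}+c_{m}$, so inductively all curvature derivatives stay bounded on $\cc{0,T}$, and the continuation criterion from local well-posedness (cf. \cite{Baker,Mantegazza2,DKS}) forces $T=\infty$. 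Hence $t^{\ast}=\infty$ and $K_{osc}\oo{t}\le K_{osc}\oo{0}\,e^{-ct}\to0$.

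Finally, to upgrade to exponential $C^{\infty}$ convergence and identify the limit: interpolating a (now bounded) high derivative norm against a small power of $K_{osc}$ gives exponential decay of $\vn{\kappa_{s^{m}}}_{2}$ for every $m$, and Sobolev embedding on the curve (whose length is pinched between positive constants) upgrades this to exponential decay of $\vn{\kappa-\bar\kappa}_{C^{k}}$ and of the normal speed $\kappa_{s^{2p}}$ in every $C^{k}$. Then $\gamma\oo{\cdot,t}$ is Cauchy in each $C^{k}$ as $t\to\infty$ and converges to a limit curve with $K_{osc}=0$, winding number one and enclosed area $A\oo{\gamma_{0}}$, that is, to a round circle of radius $\sqrt{A\oo{\gamma_{0}}/\pi}$, and all the convergence is exponential. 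I expect the main obstacle to lie in the second step: performing the evolution computation for the $(2p+2)$-th order flow, handling the degeneracy of the leading bracket on the $k=\pm1$ mode via the closedness constraint, and verifying through the combinatorics of the Gagliardo--Nirenberg exponents that every term of $\mathcal{R}$ is strictly superlinear in $K_{osc}$ with a constant that remains finite for each fixed $p$ --- so that $\varepsilon_{1}(p)$, hence $\varepsilon_{0}(p)$, is genuinely positive.
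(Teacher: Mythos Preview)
Your outline is essentially sound, but it takes a route that differs from the paper's in one central respect: how you dispose of the ``bad'' term $2L\bar\kappa^{2}\vn{\kappa_{s^{p}}}_{2}^{2}$ in the evolution of $K_{osc}$. The paper does not try to beat this term directly. Instead it observes that $2L\bar\kappa^{2}\vn{\kappa_{s^{p}}}_{2}^{2}=-8\omega^{2}\pi^{2}\frac{d}{dt}\ln L$ and therefore computes the evolution of the modified quantity $K_{osc}+8\omega^{2}\pi^{2}\ln L$ (Lemma~\ref{CurvatureLemma2}); the remaining cubic/quartic terms are then absorbed by $L\vn{\kappa_{s^{p+1}}}_{2}^{2}$ via Gagliardo--Nirenberg (Lemma~\ref{CurvatureLemma3}), yielding only \emph{boundedness} of $K_{osc}$ from \eqref{IsoperimetricConsequence} and the hypothesis on $I\oo{0}$ (Proposition~\ref{FlowProp2}). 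Global existence then comes from the DKS-type blowup criterion (Lemma~\ref{LongTimeLemma1}), and $K_{osc}\to0$ follows from its a priori $L^{1}$-in-time bound \eqref{OscillationOfCurvatureL1} together with a uniform bound on $|K_{osc}'|$ obtained from Proposition~\ref{LongTimeProp1}; only afterward is exponential decay established, for $\vn{\kappa_{s^{m}}}_{2}^{2}$ rather than $K_{osc}$, via the auxiliary inequality \eqref{LongTimeCorollary3,9}. Your approach instead exploits the Fourier spectral gap --- the bracket $\bar\kappa^{2}\vn{\kappa_{s^{p}}}_{2}^{2}-\vn{\kappa_{s^{p+1}}}_{2}^{2}$ vanishes on the $k=\pm1$ modes and is strictly negative elsewhere --- combined with the closedness identity $\int_{\gamma}e^{i\theta}\,ds=0$ to show those modes carry only $O(K_{osc}^{2})$ of the mass. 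This yields $\frac{d}{dt}K_{osc}\le-cK_{osc}$ directly and hence exponential decay of $K_{osc}$ from the outset, after which global existence and decay of higher norms follow by bootstrap. Your argument is more direct and conceptually cleaner (it explains \emph{why} the circle is linearly stable under the flow), but it requires the nonlinear step of controlling the first Fourier modes via the closure constraint; the paper's $\ln L$ trick is more elementary and avoids any spectral analysis, at the cost of a less direct path to exponential decay. Both reach the same conclusion, and your interpolation treatment of $\mathcal{R}$ is essentially the paper's Lemma~\ref{CurvatureLemma3}.
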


Although there is a plethora of negative results on the curve diffusion flow
violating positivity over time, there are relatively few results guaranteeing
preservation.
Theorem \ref{TM1} implies that after some \emph{waiting time}, the flow is
uniformly convex and remains forever so.
An estimate for the waiting time for the curve diffusion flow was given in
\cite{Wkosc}.
Here we extend this to each of the \eqref{FlowDefinition} flows.

\begin{proposition}
\label{PN1}
Suppose $\gamma:\mathbb{S}^{1}\times\left[0,T\right)\rightarrow\mathbb{R}^{2}$
solves $\eqref{FlowDefinition}$. If $\gamma(\cdot,0)$ satisfies \eqref{EQsmallness}, then
\[
\SL\{t\in[0,\infty)\,:\,k(\cdot,t)\not>0\}
\le \frac{2}{p+1}\bigg[
                 \bigg( \frac{L(\gamma_0)}{2\pi}\bigg)^{2(p+1)}
               - \bigg( \frac{A(\gamma_0)}{\pi} \bigg)^{p+1}
                 \bigg]
\,.
\]
\end{proposition}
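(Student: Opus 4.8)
The plan is to follow the waiting-time strategy: find a monotone quantity along \eqref{FlowDefinition} that decays at a \emph{definite} rate precisely at those times where $\gamma$ fails to be strictly convex. First I would record the two standard variational identities. Under \eqref{FlowDefinition} the enclosed area is constant, since $\frac{d}{dt}A=-\oo{-1}^p\intcurve{\kappa_{s^{2p}}}=0$ for $p\ge1$ (the integrand is an exact $s$-derivative on a closed curve), so $A\equiv A\oo{\gamma_0}$; and, after $p$ integrations by parts,
\[
\frac{d}{dt}L=-\oo{-1}^p\intcurve{\kappa\,\kappa_{s^{2p}}}=-\intcurve{\kappa_{s^p}^2}\le0\,.
\]
Setting $Q(t):=\oo{L(t)/2\pi}^{2(p+1)}$ this gives $-\frac{d}{dt}Q=\frac{2(p+1)}{(2\pi)^{2(p+1)}}\,L^{2p+1}\intcurve{\kappa_{s^p}^2}\ge0$. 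By Theorem \ref{TM1} the hypothesis \eqref{EQsmallness} forces $T=\infty$ and $L(t)\to2\pi\sqrt{A\oo{\gamma_0}/\pi}$, so $Q$ decreases from $\oo{\frac{L\oo{\gamma_0}}{2\pi}}^{2(p+1)}$ to $\oo{\frac{A\oo{\gamma_0}}{\pi}}^{p+1}$, and hence $\int_0^\infty\oo{-\frac{d}{dt}Q}\,dt=D$, where $D$ denotes the bracketed difference in the statement.

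The technical heart is the pointwise-in-time estimate: \emph{if $\gamma(\cdot,t)$ is a smooth closed curve of turning number one that is not strictly convex, then $L^{2p+1}\intcurve{\kappa_{s^p}^2}\ge\pi^2(2\pi)^{2p}$.} To prove it, expand $\kappa-\bar\kappa=\sum_{n\neq0}a_n e^{2\pi i ns/L}$ (mean zero, absolutely convergent as $\kappa$ is smooth), so that $\kappa_{s^p}=\sum_{n\neq0}\oo{\tfrac{2\pi i n}{L}}^{p}a_n e^{2\pi i ns/L}$ and, by Parseval, $\intcurve{\kappa_{s^p}^2}=\frac{(2\pi)^{2p}}{L^{2p-1}}\sum_{n\neq0}n^{2p}\norm{a_n}^2$. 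Turning number one gives $\bar\kappa=2\pi/L$, and $\min_s\kappa\le0$ forces $\min_s(\kappa-\bar\kappa)\le-\bar\kappa$, whence $\sum_{n\neq0}\norm{a_n}\ge\llll{\kappa-\bar\kappa}_\infty\ge\bar\kappa=\frac{2\pi}{L}$. Cauchy--Schwarz then yields $\oo{\frac{2\pi}{L}}^2\le\oo{\sum_{n\neq0}\norm{a_n}}^2\le\oo{2\zeta(2p)}\sum_{n\neq0}n^{2p}\norm{a_n}^2$, and combining the three displays gives $L^{2p+1}\intcurve{\kappa_{s^p}^2}\ge\frac{2\pi^2(2\pi)^{2p}}{\zeta(2p)}$, which is at least $\pi^2(2\pi)^{2p}$ since $\zeta(2p)\le\zeta(2)=\pi^2/6<2$. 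That the turning number is one throughout follows from \eqref{EQsmallness} together with Theorem \ref{TM1}: it is an integer-valued homotopy invariant, continuous along the smooth flow, equal to that of the limiting round circle.

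To conclude, note that $\{t\ge0:k(\cdot,t)\not>0\}$ is closed — hence measurable — because $t\mapsto\min_s k(s,t)$ is continuous. Restricting the integral of the nonnegative quantity $-\frac{d}{dt}Q$ to this set, inserting the pointwise estimate, and using $\frac{2(p+1)\pi^2(2\pi)^{2p}}{(2\pi)^{2(p+1)}}=\frac{p+1}{2}$, we get
\[
\frac{p+1}{2}\,\SL\{t\ge0:k(\cdot,t)\not>0\}\ \le\ \int_0^\infty\oo{-\frac{d}{dt}Q}\,dt\ =\ D\,,
\]
which is exactly the claimed bound $\frac{2}{p+1}D$ (and in fact the sharper bound $\frac{\zeta(2p)}{p+1}D$).

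The main obstacle I anticipate is precisely the sharp pointwise estimate. The naive route through $K_{osc}$ fails outright: one can make $K_{osc}=L\intcurve{(\kappa-\bar\kappa)^2}$ arbitrarily small while $k$ still touches zero, by using a narrow spike in $\kappa$ that is invisible to the $L^2$ norm of $\kappa-\bar\kappa$ but not to $\kappa_{s^p}$ for $p\ge1$. Thus one genuinely needs a higher-order Sobolev/Fourier inequality, and — crucially for landing the stated constant $\tfrac{2}{p+1}$ rather than a weaker, seemingly $p$-dependent one — one must extract the factor $\zeta(2p)<2$ rather than, say, the cruder constant coming from an iterated Wirtinger/one-dimensional Agmon argument (which only gives $L^{2p+1}\intcurve{\kappa_{s^p}^2}\ge2(2\pi)^{2p}$, just short of what is needed).
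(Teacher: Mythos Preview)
Your proof is correct and arrives at the same constant as the paper, but the route to the key pointwise estimate is genuinely different.

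The paper obtains the lower bound at a non-convex time by chaining three classical inequalities: $(p-1)$ applications of Wirtinger to the mean-zero functions $\kappa_{s^j}$ give $\vn{\kappa_{s^p}}_2^2\ge(4\pi^2/L^2)^{p-1}\vn{\kappa_s}_2^2$; then, because $\kappa$ has a zero (hence satisfies Dirichlet conditions on an interval of length $L$), the Poincar\'e inequality $\vn{\kappa_s}_2^2\ge(\pi^2/L^2)\vn{\kappa}_2^2$; and finally $\vn{\kappa}_2^2\ge(\intcurve{\kappa})^2/L=4\pi^2/L$ by Cauchy--Schwarz. Multiplying these out gives exactly $L^{2p+1}\vn{\kappa_{s^p}}_2^2\ge\pi^2(2\pi)^{2p}$, i.e.\ the same constant you found, so your closing remark that an ``iterated Wirtinger/Agmon'' approach falls ``just short'' is mistaken: that \emph{is} the paper's argument, and it lands precisely on the required constant. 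The paper then packages the conclusion via an informal time rearrangement (moving all non-convex times to an initial interval $[0,t_0)$), integrates the ODE for $L^{2p+2}$ there, and derives a contradiction with the isoperimetric inequality.

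Your Fourier/Parseval argument, coupled with Cauchy--Schwarz against the weights $n^{-p}$, is a clean alternative and in fact yields the sharper bound $L^{2p+1}\vn{\kappa_{s^p}}_2^2\ge 2\pi^2(2\pi)^{2p}/\zeta(2p)$, hence $\SL\{t:k\not>0\}\le\frac{\zeta(2p)}{p+1}D$, strictly better than the paper's $\frac{2}{p+1}D$ for every $p\ge1$. Your handling of the time integration is also tidier: integrating the nonnegative quantity $-\frac{d}{dt}Q$ over the (closed, measurable) non-convex set avoids the paper's somewhat informal ``rearranging $\gamma$ in time'' step altogether. The only place where you lean a little harder on Theorem~\ref{TM1} than the paper does is in identifying $Q(\infty)$ exactly; note that $Q(t)\ge(A(\gamma_0)/\pi)^{p+1}$ for all $t$ by the isoperimetric inequality and area preservation, so the inequality $\int_0^\infty(-\tfrac{d}{dt}Q)\,dt\le D$ already follows without invoking the full convergence statement.
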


Above we have used $k(\cdot,t) \not> 0$ to mean $k(s_0,t) \le 0$ for at least
one $s_0$.
This estimate is optimal in the sense that the right hand side is zero for a
simple circle.

One may wish to compare this with the case for classical PDE of higher-order,
where exciting progress on eventual positivity continues to be made
\cite{DGK15,FGG08,GG08,GG09}.

The remainder of the present paper is devoted to proving Theorem \ref{TM1} and
Proposition \ref{PN1}.
We cover some basic definitions and integral formulae in Section 2, before
moving on to essential evolution equations for length, area, and curvature in
Section 3.  We study $K_{osc}$ directly in Section 4, obtaining precise control
over $K_{osc}$ in the case where the initial data is sufficiently close in a
weak isoperimetric sense to a circle and has $K_{osc}$ initially smaller than
an explicit constant.
We continue by adapting Dziuk-Kuwert-Sch\"atzle's blowup criterion argument to
\eqref{FlowDefinition} flows (Lemma \ref{LongTimeLemma1}), yielding in Section
5 global existence.
Further analysis gives exponentially fast convergence to a circle with specific
radius dependent on the initial enclosed area.
We finish Section 5 by giving the proof of Proposition \ref{PN1}.

\end{section}

\begin{section}{Preliminaries}
\begin{lemma}\label{PrelimLemma1}
Suppose $\gamma:\mathbb{S}^{1}\times\left[0,T\right)\rightarrow\mathbb{R}^{2}$
solves $\eqref{FlowDefinition}$, and
$f:\mathbb{S}^{1}\times\left[0,T\right)\rightarrow\mathbb{R}$ is a periodic
function with the same period as $\gamma$. Then 
\begin{equation*}
\frac{d}{dt}\intcurve{f}=\intcurve{f_{t}+\oo{-1}^{p+1}f\cdot\kappa\cdot \kappa_{s^{2p}}}.
\end{equation*}
\end{lemma}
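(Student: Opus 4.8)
The plan is to differentiate the arc-length integral by first pulling everything back to the fixed parameter domain $\mathbb{S}^1$, where the measure $du$ does not depend on $t$, and then tracking the time derivative of the length element. Writing $ds = \norm{\gamma_u}\,du$, we have $\intcurve{f} = \int_{\mathbb{S}^1} f\,\norm{\gamma_u}\,du$, so that
\[
\frac{d}{dt}\intcurve{f}
  = \int_{\mathbb{S}^1}\Bigl( f_t\,\norm{\gamma_u} + f\,\frac{\partial}{\partial t}\norm{\gamma_u}\Bigr)\,du\,.
\]
The first term is immediately $\intcurve{f_t}$, so the entire content of the lemma is the computation of $\partial_t\norm{\gamma_u}$, equivalently the commutator $[\partial_t,\partial_s]$ arising from the $t$-dependence of the arc-length parametrisation.

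The key step is the standard identity $\partial_t\norm{\gamma_u} = \IP{\partial_s(\partial_t\gamma)}{\partial_s\gamma}\,\norm{\gamma_u}$, which follows by writing $\norm{\gamma_u}^2 = \IP{\gamma_u}{\gamma_u}$, differentiating in $t$, using that $\partial_t$ and $\partial_u$ commute, and then converting the $u$-derivative on $\partial_t\gamma$ to an $s$-derivative. Substituting the flow equation $\partial_t\gamma = \oo{-1}^p\kappa_{s^{2p}}\nu$ and using $\partial_s\gamma = \tau$ (the unit tangent) together with the Frenet relations $\partial_s\tau = \kappa\nu$ and $\partial_s\nu = -\kappa\tau$, one finds
\[
\partial_s\bigl(\oo{-1}^p\kappa_{s^{2p}}\nu\bigr)
  = \oo{-1}^p\kappa_{s^{2p+1}}\nu - \oo{-1}^p\kappa\,\kappa_{s^{2p}}\tau\,,
\]
so that $\IP{\partial_s(\partial_t\gamma)}{\tau} = \oo{-1}^{p+1}\kappa\,\kappa_{s^{2p}}$, giving $\partial_t\norm{\gamma_u} = \oo{-1}^{p+1}\kappa\,\kappa_{s^{2p}}\,\norm{\gamma_u}$. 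Feeding this back into the integral and re-expressing $\norm{\gamma_u}\,du$ as $ds$ yields exactly the claimed formula.

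I do not anticipate a genuine obstacle here; the result is a routine but essential bookkeeping computation. The only point requiring a little care is the sign: one must be consistent with the orientation convention $\kappa\nu = \partial_s^2\gamma$ fixed in the introduction, and track the minus sign coming from $\partial_s\nu = -\kappa\tau$ correctly so that the normal speed $\oo{-1}^p\kappa_{s^{2p}}$ produces the factor $\oo{-1}^{p+1}\kappa\,\kappa_{s^{2p}}$ in the variation of $ds$. A secondary technical remark is that $f$ must be periodic with the same period as $\gamma$ so that the integral over $\mathbb{S}^1$ is well-defined and no boundary terms intervene; this hypothesis is used only implicitly, in that it legitimises differentiating under the integral sign over the closed loop.
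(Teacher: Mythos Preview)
Your proposal is correct and follows essentially the same route as the paper: both compute the evolution of the arc-length element via $\partial_t\norm{\gamma_u} = \IP{\partial_s(\partial_t\gamma)}{\tau}\,\norm{\gamma_u}$, insert the flow speed, and use the Frenet relation $\partial_s\nu = -\kappa\tau$ to obtain the factor $(-1)^{p+1}\kappa\,\kappa_{s^{2p}}$. The only cosmetic difference is that the paper carries along a possibly time-dependent period $P(t)$ and then argues the resulting Leibniz boundary term vanishes because the velocity is purely normal, whereas you work directly over the fixed parameter circle $\mathbb{S}^1$ and thereby avoid that step altogether; your version is slightly cleaner for that reason.
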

\begin{proof}
We first calculate the evolution of arc length. Because $\nu\perp\tau$, it follows from the Frenet-Serret equations that
\begin{align}
\partial_{s}ds&=\partial_{t}\norm{\gamma_{u}}\,du=\partial_{t}\inner{\gamma_{u},\gamma_{u}}^{\frac{1}{2}}\,du\nonumber\\
&=\norm{\gamma_{u}}^{-1}\inner{\partial_{ut}\gamma,\gamma_{u}}\,du=\inner{\partial_{s}\gamma_{t},\tau}\,ds\nonumber\\
&=\inner{\partial_{s}\oo{\oo{-1}^{p}\kappa_{s^{2p}}\cdot\nu},\tau}\,ds=\oo{-1}^{p}\kappa_{s^{2p}}\inner{\partial_{s}\nu,\tau}\,ds\nonumber\\
&=\oo{-1}^{p+1}\kappa\cdot\kappa_{s^{2p}}\,ds.\label{PrelimLemma1,0}
\end{align}

Next, using the fundamental theorem of calculus and $\oo{\ref{PrelimLemma1,0}}$ we have
\begin{align}
\frac{d}{dt}\int_{\gamma}{f\,ds}&=\frac{d}{dt}\int_{0}^{P\oo{t}}{f\oo{u,t}\left|\gamma_{u}\oo{u,t}\right|\,du}\nonumber\\
&=\intcurve{f_{t}}+\intcurve{f\partial_{t}}+P'\oo{t}\cdot\frac{d}{dP\oo{t}}\int_{0}^{P\oo{t}}{f\oo{u,t}\left|\gamma_{u}\oo{u,t}\right|\,du}\nonumber\\
&=\intcurve{f_{t}+\oo{-1}^{p+1}\kappa\cdot\kappa_{s^{2p}}}+P'\oo{t}f\oo{P\oo{t},t}\left|\gamma_{u}\oo{P\oo{t},t}\right|\nonumber\\
&=\intcurve{f_{t}+\oo{-1}^{p+1}\kappa\cdot\kappa_{s^{2p}}}.\label{PrelimLemma1,1}
\end{align}
Here the last line follows from the fact that
\[
P'(t)\norm{\gamma_{u}\oo{P\oo{t},t}}=\Big(\partial_{t}\big(\gamma\oo{P\oo{t},t}-\gamma\oo{0,t}\big)\Big)^\top=0
\]
because $\partial_{t}\gamma$ is purely normal to $\gamma$.
\end{proof}
\end{section}

\begin{section}{Fundamental evolution equations}
\begin{corollary}\label{PrelimCor1}
Suppose $\gamma:\mathbb{S}^{1}\times\left[0,T\right)\rightarrow\mathbb{R}^{2}$
solves $\eqref{FlowDefinition}$ Then
\[
\frac{d}{dt}L=-\int_{\gamma}{\kappa_{s^{p}}^{2}\,ds}\text{  and  }\frac{d}{dt}A=0.
\]
In particular, the isoperimetric ratio decreases in absolute value with velocity
\[
\frac{d}{dt}I=-\frac{2I}{L}\intcurve{\kappa_{s^{p}}^{2}}\leq0.
\]
\end{corollary}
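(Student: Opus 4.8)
The plan is to derive all three statements as immediate consequences of Lemma~\ref{PrelimLemma1}, together with a couple of integrations by parts over the closed curve $\gamma$.

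First I would compute $\frac{d}{dt}L$ by applying Lemma~\ref{PrelimLemma1} with $f\equiv 1$, which gives $\frac{d}{dt}L = \oo{-1}^{p+1}\intcurve{\kappa\cdot\kappa_{s^{2p}}}$. Since $\gamma$ is closed, integrating by parts $p$ times moves $p$ of the arc-length derivatives off $\kappa_{s^{2p}}$ and onto $\kappa$, picking up a sign $\oo{-1}^{p}$ each full transfer is accounted for so that $\oo{-1}^{p+1}\intcurve{\kappa\cdot\kappa_{s^{2p}}} = \oo{-1}^{p+1}\oo{-1}^{p}\intcurve{\kappa_{s^{p}}\cdot\kappa_{s^{p}}} = -\intcurve{\kappa_{s^{p}}^{2}}$, which is the claimed formula. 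One small point to be careful about is bookkeeping the sign: each integration by parts on $\S^1$ contributes a factor $-1$, and after $p$ such steps the combined factor is $\oo{-1}^{p}$; combined with the prefactor $\oo{-1}^{p+1}$ this yields $\oo{-1}^{2p+1}=-1$.

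Next, for $\frac{d}{dt}A$, I would use the standard first-variation formula for enclosed area under a normal variation: if $\partial_t\gamma = \varphi\,\nu$ then $\frac{d}{dt}A = -\intcurve{\varphi}$ (with the sign convention fixed by $\kappa\nu = \partial_s^2\gamma$). Here $\varphi = \oo{-1}^{p}\kappa_{s^{2p}}$, so $\frac{d}{dt}A = -\oo{-1}^{p}\intcurve{\kappa_{s^{2p}}} = -\oo{-1}^{p}\intcurve{\partial_s\oo{\kappa_{s^{2p-1}}}}$, and since $p\ge 1$ this is the integral of an exact derivative over a closed curve, hence zero. (Equivalently, one can note $A = \frac12\left|\intcurve{\inner{\gamma,\nu}}\right|$ and differentiate, or invoke that $A$ is the $L^2$-orthogonal complement of the length gradient; the exact-derivative argument is the cleanest.)

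Finally, $I = \frac{L^2}{4\pi A}$ (up to the normalisation the paper uses for the isoperimetric ratio), so by the quotient rule and the two facts just established,
\[
\frac{d}{dt}I = \frac{2L\,L_t}{4\pi A} = \frac{2I}{L}L_t = -\frac{2I}{L}\intcurve{\kappa_{s^{p}}^{2}} \le 0,
\]
using $I\ge 0$ and $L>0$. The only genuine content here is the sign-tracking in the integration by parts for $L_t$; everything else is bookkeeping. I expect the main (minor) obstacle to be making sure the area first-variation sign is consistent with the paper's choice $\kappa\nu=\partial_s^2\gamma$, but in any case the vanishing of $\frac{d}{dt}A$ does not depend on that sign, and the monotonicity of $I$ follows regardless.
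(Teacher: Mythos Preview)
Your treatment of $\frac{d}{dt}L$ and $\frac{d}{dt}I$ is exactly what the paper does. For $\frac{d}{dt}A$ you take a shortcut: you invoke the standard first-variation formula for enclosed area under a normal variation and then observe that $\int_{\gamma}\kappa_{s^{2p}}\,ds$ is the integral of an exact derivative (valid since $p\ge 1$), hence zero. The paper instead works from scratch: it first computes the commutator $[\partial_t,\partial_s]=(-1)^{p}\kappa\,\kappa_{s^{2p}}\partial_s$, derives $\partial_t\tau=(-1)^{p}\kappa_{s^{2p+1}}\nu$ and $\partial_t\nu=(-1)^{p+1}\kappa_{s^{2p+1}}\tau$, and then differentiates $A=-\tfrac12\int_{\gamma}\langle\gamma,\nu\rangle\,ds$ via Lemma~\ref{PrelimLemma1} with $f=\langle\gamma,\nu\rangle$, arriving after an integration by parts at $(-1)^{p+1}\int_{\gamma}\kappa_{s^{2p}}\,ds=0$. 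Your route is quicker and entirely legitimate; the paper's is more self-contained, and the intermediate formulas for the commutator and for $\partial_t\tau$, $\partial_t\nu$ are not wasted effort, since they are reused immediately afterward to compute $\partial_t\kappa$ in Lemma~\ref{CurvatureLemma1}.
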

\begin{proof}
Applying Lemma $\ref{PrelimLemma1}$ with $f\equiv1$ gives the statement for $L$:
\[
\frac{d}{dt}L=\frac{d}{dt}\intcurve{}=\oo{-1}^{p+1}\intcurve{\kappa\cdot \kappa_{s^{2p}}}=-\int_{\gamma}{\kappa_{s^{p}}^{2}\,ds}\leq0.
\]
Here we have performed integration by parts $p$ times. For the statement regarding area, we first state the Frenet-Serret formulas with no torsion:
\begin{equation}
\tau_{s}=\kappa\nu\text{  and  }\nu_{s}=-\kappa\tau.\label{PrelimCor1,1}
\end{equation}
Using the equations in $\oo{\ref{PrelimCor1,1}}$, we wish to derive a formula for the time derivative of the unit normal $\nu$. We first work out the commutator:
\begin{align}
\partial_{ts}&=\partial_{t}\oo{\partial_{s}}=\partial_{t}\oo{\norm{\gamma_{u}}^{-1}\partial_{u}}=\left|\gamma_{u}\right|^{-1}\partial_{t}\partial_{u}-\left|\gamma_{u}\right|^{-2}\oo{\partial_{t}\left|\gamma_{u}\right|}\partial_{u}\nonumber\\
&=\partial_{st}-\norm{\gamma_{u}}^{-3}\inner{\partial_{u}\gamma_{t},\gamma_{u}}\partial_{u}=\partial_{st}-\inner{\partial_{s}\gamma_{t},\tau}\partial_{s}\nonumber\\
&=\partial_{st}-\inner{\partial_{s}\oo{\oo{-1}^{p}\kappa_{s^{2p}}\cdot\nu},\tau}\partial_{s}\nonumber\\
&=\partial_{st}+\oo{-1}^{p}\kappa\cdot\kappa_{s^{2p}}\partial_{s}.\label{PrelimCor1,2}
\end{align}
We then use $\oo{\ref{PrelimCor1,1}},\oo{\ref{PrelimCor1,2}}$ and the identity $\gamma_{s}=\tau$ to calculate:
\begin{align}
\partial_{t}\tau&=\partial_{ts}\gamma=\partial_{st}\gamma+\oo{-1}^{p}\kappa\cdot\kappa_{s^{2p}}\partial_{s}\gamma\nonumber\\
&=\oo{-1}^{p}\cc{\kappa_{s^{2p+1}}\cdot\nu-\kappa\cdot\kappa_{s^{2p}}\cdot\tau}+\oo{-1}^{p}\kappa\cdot\kappa_{s^{2p}}\cdot\tau\nonumber\\
&=\oo{-1}^{p}\kappa_{s^{2p+1}}\cdot\nu.\label{PrelimCor1,3}
\end{align}
Using the fact that $\nu\perp\tau$ and $\norm{\nu}^{2}=1\implies\partial_{t}\nu\perp\nu$, it then follows from $\oo{\ref{PrelimCor1,3}}$ that
\begin{align}
\partial_{t}\nu&=\inner{\partial_{t}\nu,\tau}\tau=-\inner{\nu,\partial_{t}\tau}\tau\nonumber\\
&=-\inner{\nu,\oo{-1}^{p}\kappa_{s^{2p+1}}\cdot\nu}\tau=\oo{-1}^{p+1}\kappa_{s^{2p+1}}\cdot\tau.\label{PrelimCor1,4}
\end{align}
Applying Lemma $\ref{PrelimLemma1}$ with $f=\inner{\gamma,\nu}$ then gives
\begin{align}
\frac{d}{dt}A&=-\frac{1}{2}\frac{d}{dt}\int_{\gamma}{\inner{\gamma,\nu}\,ds}=-\frac{1}{2}\intcurve{\partial_{t}\inner{\gamma,\nu}+\oo{-1}^{p+1}\inner{\gamma,\nu}\cdot\kappa\cdot\kappa_{s^{2p}}}\nonumber\\
&=-\frac{1}{2}\intcurve{\inner{\oo{-1}^{p}\kappa_{s^{2p}}\cdot\nu,\nu}+\inner{\gamma,\oo{-1}^{p+1}\kappa_{s^{2p+1}}\cdot\tau}+\oo{-1}^{p+1}\inner{\gamma,\nu}\cdot\kappa\cdot\kappa_{s^{2p}}}\nonumber\\
&=-\frac{1}{2}\intcurve{\oo{-1}^{p}\kappa_{s^{2p}}+\oo{-1}^{p+1}\kappa_{s^{2p+1}}\inner{\gamma,\tau}+\oo{-1}^{p+1}\inner{\gamma,\tau_{s}}\kappa_{s^{2p}}}\nonumber\\
&-\frac{1}{2}\intcurve{\oo{-1}^{p}\kappa_{s^{2p}}+\oo{-1}^{p+1}\kappa_{s^{2p+1}}\inner{\gamma,\tau}+\oo{-1}^{p}\cc{\inner{\gamma_{s},\tau}\kappa_{s^{2p}}+\inner{\gamma,\tau}\kappa_{s^{2p+1}}}}\nonumber\\
&=\oo{-1}^{p+1}\intcurve{\kappa_{s^{2p}}}=\oo{-1}^{p+1}\kappa_{s^{2p-1}}\Bigg|_{s=0}^{s=L\oo{\gamma}}\nonumber\\
&=0.\nonumber
\end{align}
Here we have used integration by parts in the third last line. The last step follows from the divergence theorem, and using the periodicity of $\gamma$.

To establish the evolution equaiton for the isoperimetric ratio we simply combine the two established results for $L$ and $A$:
\begin{align}
\frac{\partial}{\partial t}I&=\frac{\partial}{\partial t}\oo{\frac{L^{2}}{4\pi A}}=\frac{1}{4\pi A^{2}}\cc{2AL\frac{\partial}{\partial t}L-L^{2}\frac{\partial}{\partial t}A}\nonumber\\
&=-\frac{2L}{4\pi A}\int_{\gamma}{\kappa_{s^{p}}^{2}\,ds}\nonumber\\
&=-\frac{2I}{L}\int_{\gamma}{\kappa_{s^{p}}^{2}\,ds}\leq0.\nonumber
\end{align}
This completes the proof.
\end{proof}

\begin{lemma}\label{CurvatureLemma1}
Suppose that
$\gamma:\mathbb{S}^{1}\times\left[0,T\right)\rightarrow\mathbb{R}^{2}$ solves
$\eqref{FlowDefinition}$ and
\[
\intcurve{\kappa}\Big|_{t=0}=2\omega\pi.
\]
Then
\[
\intcurve{\kappa}=2\omega\pi
\]
for $t\in\left[0,T\right)$. Moreover, the average curvature $\overline{\kappa}=\frac{1}{L}\intcurve{\kappa}$ increases in absolute value with velocity
\[
\frac{d}{dt}\overline{\kappa}=\frac{2\omega\pi}{L^2}\llll{\kappa_{s^{p}}}_{2}^{2}\geq0.
\]
\end{lemma}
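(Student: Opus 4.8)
The plan is to first establish the evolution equation for $\kappa$ and then to apply Lemma \ref{PrelimLemma1} with $f=\kappa$. For the first task I would write $\kappa=\inner{\tau_{s},\nu}$ and differentiate in $t$, using the commutator $\oo{\ref{PrelimCor1,2}}$ to exchange $\partial_{t}$ and $\partial_{s}$, together with the already-established evolutions $\oo{\ref{PrelimCor1,3}}$ of $\tau$ and $\oo{\ref{PrelimCor1,4}}$ of $\nu$ and the Frenet-Serret relations $\oo{\ref{PrelimCor1,1}}$. The term $\inner{\tau_{s},\partial_{t}\nu}=\inner{\kappa\nu,\oo{-1}^{p+1}\kappa_{s^{2p+1}}\tau}$ vanishes since $\nu\perp\tau$, while $\partial_{t}\tau_{s}=\partial_{st}\tau+\oo{-1}^{p}\kappa\cdot\kappa_{s^{2p}}\tau_{s}$ produces, after expanding $\partial_{s}\oo{\oo{-1}^{p}\kappa_{s^{2p+1}}\nu}$ with $\nu_{s}=-\kappa\tau$, the identity
\[
\partial_{t}\kappa=\oo{-1}^{p}\kappa_{s^{2p+2}}+\oo{-1}^{p}\kappa^{2}\kappa_{s^{2p}}.
\]
(This is the expected formula $\partial_{t}\kappa=F_{ss}+\kappa^{2}F$ for normal speed $F=\oo{-1}^{p}\kappa_{s^{2p}}$.) I expect this derivation to be the only genuinely computational part; the bookkeeping of signs and the correct use of the commutator are where care is needed.

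With the curvature evolution in hand, applying Lemma \ref{PrelimLemma1} with $f=\kappa$ gives
\[
\frac{d}{dt}\intcurve{\kappa}=\intcurve{\partial_{t}\kappa+\oo{-1}^{p+1}\kappa\cdot\kappa\cdot\kappa_{s^{2p}}}=\intcurve{\oo{-1}^{p}\kappa_{s^{2p+2}}},
\]
where the two copies of $\kappa^{2}\kappa_{s^{2p}}$ cancel. The remaining integrand is the total arc-length derivative $\partial_{s}\oo{\oo{-1}^{p}\kappa_{s^{2p+1}}}$, so the integral vanishes by the fundamental theorem of calculus and the periodicity of $\kappa$, exactly as in the computation of $\frac{d}{dt}A$ in Corollary \ref{PrelimCor1}. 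Hence $\intcurve{\kappa}$ is constant in time and equals its initial value $2\omega\pi$ for all $t\in\cc{0,T}$.

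Finally, since the previous step gives $\overline{\kappa}=\frac{1}{L}\intcurve{\kappa}=\frac{2\omega\pi}{L}$, and since $\frac{d}{dt}L=-\intcurve{\kappa_{s^{p}}^{2}}$ by Corollary \ref{PrelimCor1}, the quotient rule yields
\[
\frac{d}{dt}\overline{\kappa}=-\frac{2\omega\pi}{L^{2}}\,\frac{d}{dt}L=\frac{2\omega\pi}{L^{2}}\intcurve{\kappa_{s^{p}}^{2}}=\frac{2\omega\pi}{L^{2}}\llll{\kappa_{s^{p}}}_{2}^{2},
\]
which is the claimed expression; it is nonnegative when $\omega\geq0$ (and shows $\norm{\overline{\kappa}}$ is nonincreasing when $\omega<0$). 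This completes the proof.
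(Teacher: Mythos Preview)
Your proof is correct and follows essentially the same approach as the paper: derive $\partial_{t}\kappa=\oo{-1}^{p}\oo{\kappa_{s^{2p+2}}+\kappa^{2}\kappa_{s^{2p}}}$ (the paper writes $\kappa=\inner{\nu,\gamma_{ss}}$ rather than $\inner{\tau_{s},\nu}$, which is the same thing), apply Lemma~\ref{PrelimLemma1} with $f=\kappa$ so that the cubic terms cancel and the remaining total derivative integrates to zero, and then combine $\intcurve{\kappa}\equiv2\omega\pi$ with $\frac{d}{dt}L$ from Corollary~\ref{PrelimCor1}. One trivial slip: in your final parenthetical, when $\omega<0$ the quantity $\norm{\overline{\kappa}}$ is \emph{nondecreasing}, not nonincreasing, since $\overline{\kappa}<0$ and $\frac{d}{dt}\overline{\kappa}\le0$.
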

\begin{proof}
We first need to calculate the evolution equation for curvature. Using the definition $\kappa=\inner{\nu,\gamma_{ss}}$ along with previous identities, we have
\begin{align}
\frac{\partial}{\partial t}\kappa&=\frac{\partial}{\partial t}\inner{\nu,\gamma_{ss}}=\inner{\nu_{t},\gamma_{ss}}+\inner{\nu,\partial_{t}\gamma_{ss}}=\inner{\nu_{t},\gamma_{ss}}+\inner{\nu,\partial_{ts}\tau}\nonumber\\
&=\inner{\oo{-1}^{p+1}\kappa_{s^{2p+1}}\cdot\tau,\kappa\cdot\nu}+\inner{\nu,\partial_{st}\tau+\oo{-1}^{p}\kappa\cdot\kappa_{s^{2p}}\cdot\tau_{s}}\nonumber\\
&=\inner{\nu,\partial_{s}\oo{\oo{-1}^{p}\kappa_{s^{2p+1}}\cdot\nu}+\oo{-1}^{p}\kappa^{2}\cdot\kappa_{s^{2p}}\cdot\nu}\nonumber\\
&=\oo{-1}^{p}\oo{\kappa_{s^{2p+2}}+\kappa^{2}\cdot\kappa_{s^{2p}}}.\label{CurvatureLemma1,1}
\end{align}
Then, applying Lemma $\ref{PrelimLemma1}$ with $f=\kappa$ gives us
\begin{equation}
\frac{d}{dt}\intcurve{\kappa}=\intcurve{\kappa_{t}+\oo{-1}^{p+1}\kappa^{2}\cdot\kappa_{s^{2p}}}=0.\label{CurvatureLemma1,3}
\end{equation}
It follows from $\oo{\ref{CurvatureLemma1,3}}$ that the integral $\intcurve{\kappa}$ stays constant on $\left[0,T\right)$. This gives the first assertion of the lemma. For the second assertion, we simply use $\oo{\ref{CurvatureLemma1,3}}$ and Corollary $\ref{PrelimCor1}$ and compute:
\begin{align*}
\frac{d}{dt}\overline{\kappa}&=\frac{d}{dt}\oo{\frac{1}{L}\intcurve{\kappa}}=\frac{1}{L^2}\cc{L\cdot\frac{d}{dt}\intcurve{\kappa}-\intcurve{\kappa}\cdot\frac{d}{dt}L}\\
&=-\frac{2\omega\pi}{L^{2}}\cdot-\intcurve{\kappa_{s^{p}}^{2}}=\frac{2\omega\pi}{L^{2}}\llll{\kappa_{s^{p}}}_{2}^{2}\\
&\geq0.
\end{align*}
This completes the proof.
\end{proof}
\end{section}
\begin{section}{The Normalised Oscillation of Curvature}
We now introduce a scale-invariant quantity
\[
K_{osc}:=L\intcurve{\oo{\kappa-\bar{\kappa}}^{2}}
\]
which we call the \emph{normalised oscillation of curvature}.

One can deduce from our previous calculations that this quantity is a natural
one, being that for a one parameter family of curves $\gamma_{t}$ that solves
$\eqref{FlowDefinition}$, $K_{osc}\oo{t}$ is a bounded quantity in $L^{1}$
(and in fact is bounded by a quantity that depends on the initial data,
$\gamma_{0}$ and so can be controlled a priori). Indeed, The fact that
$\intcurve{\oo{\kappa-\bar{\kappa}}}=0$ means that we can apply Lemma
$\ref{AppendixLemma1}$, giving
\[
K_{osc}=L\intcurve{\oo{\kappa-\bar{\kappa}}^{2}}\leq L\oo{\frac{L}{2\pi}}^{2}\intcurve{\kappa_{s}^{2}}.
\]
Now the periodicity of $\kappa$ implies that for every $i\geq 1$, $\intcurve{\kappa_{s^{i}}}=0$, so we can apply Lemma $\ref{AppendixLemma1}$ to the right hand side of the above inequality $p$ more times, yielding
\begin{equation}
K_{osc}\leq L\oo{\frac{L}{2\pi}}^{2p}\intcurve{\kappa_{s^{p}}^{2}}=-\frac{L^{2p+1}}{\oo{2\pi}^{2p}}\frac{d}{dt}L=-\frac{1}{2\oo{p+1}\oo{2\pi}^{2p}}\frac{d}{dt}\oo{L^{2\oo{p+1}}}.\label{OscillationOfCurvature1}
\end{equation}
Here we have utilised the evolution of the length functional. We conclude that for any $t\in\left[0,T\right)$
\begin{align}
\int_{0}^{t}{K_{osc}\oo{\tau}\,d\tau}&\leq-\frac{1}{2\oo{p+1}\oo{2\pi}^{2p}}\oo{L^{2\oo{p+1}}\oo{\gamma_{t}}-L^{2p+1}\oo{\gamma_{0}}}\nonumber\\
&\leq\frac{1}{2\oo{p+1}\oo{2\pi}^{2p}}L^{2\oo{p+1}}\oo{\gamma_{0}}.\label{OscillationCurvature2}
\end{align}
We deduce from $\oo{\ref{OscillationCurvature2}}$ that the normalised oscillation of curvature is a priori controlled in $L^{1}$ over the time of existence of the flow. Furthermore, by repeatedly using Lemma $\ref{AppendixLemma2}$ in a similar fashion, one can easily obtain an $L^{1}$ bound for $\llll{\kappa-\bar{\kappa}}_{\infty}^{2}$ over the interval $\left[0,T\right)$. Firstly
\begin{align*}
\llll{\kappa-\bar{\kappa}}_{\infty}^{2}&\leq\frac{L}{2\pi}\intcurve{\kappa_{s}^{2}}\leq \frac{L}{2\pi}\oo{\frac{L}{2\pi}}^{2}\intcurve{\kappa_{s^{2}}^{2}}\\
&\vdots\\
&\leq \frac{L}{2\pi}\oo{\frac{L}{2\pi}}^{2\oo{p-1}}\intcurve{\kappa_{s^{p}}^{2}}=-\frac{L^{2p-1}}{\oo{2\pi}^{2p-1}}\frac{d}{dt}L\\
&=-\frac{1}{2p\oo{2\pi}^{2p-1}}\frac{d}{dt}L^{2p}.
\end{align*} 
Hence for any $t\in\left[0,T\right)$,
\begin{equation}
\int_{0}^{t}{\llll{\kappa-\bar{\kappa}}_{\infty}^{2}\,d\tau}\leq-\frac{1}{2p\oo{2\pi}^{2p-1}}\oo{L^{2p}\oo{\gamma_{t}}-L^{2p}\oo{\gamma_{0}}}\leq\frac{1}{2p\oo{2\pi}^{2p-1}}L^{2p}\oo{\gamma_{0}}.\label{OscillationCurvature3}
\end{equation}
Next we formulate the evolution equation for $K_{osc}$. 
\begin{lemma}\label{CurvatureLemma2}
Suppose $\gamma:\mathbb{S}^{1}\times\left[0,T\right)\rightarrow\mathbb{R}^{2}$
solves $\eqref{FlowDefinition}$. Then 
\begin{align*}
&\frac{d}{dt}\oo{K_{osc}+8\omega^{2}\pi^{2}\ln{L}}+\frac{\llll{\kappa_{s^{p}}}_{2}^{2}}{L}K_{osc}+2L\llll{\kappa_{s^{p+1}}}_{2}^{2}\\
&=L\intcurve{\cc{\oo{\kappa-\bar{\kappa}}^{3}+\bar{\kappa}\oo{\kappa-\bar{\kappa}}^{2}}_{s^{p}}\oo{\kappa-\bar{\kappa}}_{s^{p}}}.
\end{align*}
\end{lemma}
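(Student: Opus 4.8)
The plan is to differentiate $K_{osc}=L\intcurve{\oo{\kappa-\bar\kappa}^{2}}$ directly, substitute the evolution equations already established, and integrate by parts repeatedly until the right-hand side is in the stated cubic form, absorbing the resulting zero-order remainder into a logarithmic term. By the product rule, $\frac{d}{dt}K_{osc}=\frac{dL}{dt}\intcurve{\oo{\kappa-\bar\kappa}^{2}}+L\frac{d}{dt}\intcurve{\oo{\kappa-\bar\kappa}^{2}}$; Corollary \ref{PrelimCor1} gives $\frac{dL}{dt}=-\llll{\kappa_{s^{p}}}_{2}^{2}$, so the first summand equals $-\frac{\llll{\kappa_{s^{p}}}_{2}^{2}}{L}K_{osc}$, which is precisely the $K_{osc}$-term on the left-hand side of the claim. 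For the second summand I would apply Lemma \ref{PrelimLemma1} with $f=\oo{\kappa-\bar\kappa}^{2}$, expand $\partial_{t}\oo{\kappa-\bar\kappa}^{2}=2\oo{\kappa-\bar\kappa}\oo{\kappa_{t}-\bar\kappa_{t}}$, and insert $\kappa_{t}$ from \eqref{CurvatureLemma1,1} and $\bar\kappa_{t}$ from Lemma \ref{CurvatureLemma1}. Since $\bar\kappa_{t}$ is constant along $\gamma$ and $\intcurve{\oo{\kappa-\bar\kappa}}=0$, the $\bar\kappa_{t}$-contribution integrates to zero, and it then remains to integrate over $\gamma$ the sum of a top-order term $2\oo{-1}^{p}\oo{\kappa-\bar\kappa}\kappa_{s^{2p+2}}$ and a cubic term $\oo{-1}^{p}\cc{2\oo{\kappa-\bar\kappa}\kappa^{2}-\oo{\kappa-\bar\kappa}^{2}\kappa}\kappa_{s^{2p}}$.

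All boundary terms in the integrations by parts below vanish by periodicity, and since $\bar\kappa$ is spatially constant one has $\oo{\kappa-\bar\kappa}_{s^{j}}=\kappa_{s^{j}}$ for every $j\ge 1$. Integrating the top-order term by parts $p+1$ times yields $-2\llll{\kappa_{s^{p+1}}}_{2}^{2}$, which after multiplying by $L$ is the term $2L\llll{\kappa_{s^{p+1}}}_{2}^{2}$ on the left-hand side of the claim. For the cubic term I would use the factorisation $2\oo{\kappa-\bar\kappa}\kappa^{2}-\oo{\kappa-\bar\kappa}^{2}\kappa=\kappa\oo{\kappa-\bar\kappa}\oo{\kappa+\bar\kappa}$, regroup this as a cubic polynomial in $\oo{\kappa-\bar\kappa}$ with $\bar\kappa$-dependent coefficients, and integrate by parts $p$ times to move $p$ of the arc-length derivatives onto the polynomial. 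Collecting terms, one obtains the right-hand side $L\intcurve{\cc{\oo{\kappa-\bar\kappa}^{3}+\bar\kappa\oo{\kappa-\bar\kappa}^{2}}_{s^{p}}\oo{\kappa-\bar\kappa}_{s^{p}}}$ together with a remainder proportional to $L\bar\kappa^{2}\llll{\kappa_{s^{p}}}_{2}^{2}$, arising from the part of the polynomial linear in $\oo{\kappa-\bar\kappa}$.

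It remains only to identify that remainder. Since $\bar\kappa=2\omega\pi/L$, it is a fixed multiple of $\frac{\omega^{2}\pi^{2}}{L}\llll{\kappa_{s^{p}}}_{2}^{2}$, and because $\frac{dL}{dt}=-\llll{\kappa_{s^{p}}}_{2}^{2}$ we have $\frac{\omega^{2}\pi^{2}}{L}\llll{\kappa_{s^{p}}}_{2}^{2}=-\frac{d}{dt}\oo{\omega^{2}\pi^{2}\ln L}$; tracking the numerical constant shows the remainder equals $-\frac{d}{dt}\oo{8\omega^{2}\pi^{2}\ln L}$, and transferring it to the left turns $\frac{d}{dt}K_{osc}$ into $\frac{d}{dt}\oo{K_{osc}+8\omega^{2}\pi^{2}\ln L}$, which is the statement. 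The step I expect to be most delicate is the sign bookkeeping through the roughly $2p+2$ integrations by parts, together with the observation --- which is exactly what forces the combination $K_{osc}+8\omega^{2}\pi^{2}\ln L$ to appear in the lemma --- that the zero-order remainder is an exact time-derivative of a multiple of $\ln L$. A convenient consistency check, which also streamlines the computation by removing $\bar\kappa_{t}$ altogether, is the identity $K_{osc}=L\intcurve{\kappa^{2}}-4\omega^{2}\pi^{2}$, obtained by expanding the square and using $\intcurve{\kappa}=2\omega\pi$ and $L\bar\kappa=2\omega\pi$: one may then run the entire argument starting from $\frac{d}{dt}\oo{L\intcurve{\kappa^{2}}}$.
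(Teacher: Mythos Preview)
Your proposal is correct and follows essentially the same route as the paper: product rule on $K_{osc}$, Lemma~\ref{PrelimLemma1} for the integral, substitution of $\kappa_t$ from \eqref{CurvatureLemma1,1}, $p{+}1$ integrations by parts on the top-order piece, expansion of the cubic residue in powers of $\kappa-\bar\kappa$, and identification of the $\bar\kappa^{2}$-remainder with $-\tfrac{d}{dt}(8\omega^{2}\pi^{2}\ln L)$ via $\bar\kappa=2\omega\pi/L$ and $L'=-\llll{\kappa_{s^{p}}}_2^2$. Your factorisation $2(\kappa-\bar\kappa)\kappa^{2}-(\kappa-\bar\kappa)^{2}\kappa=\kappa(\kappa-\bar\kappa)(\kappa+\bar\kappa)$ is a mild streamlining of the paper's direct expansion; note that carrying it out gives coefficient $3\bar\kappa$ (not $\bar\kappa$) on the $(\kappa-\bar\kappa)^{2}$ term, a harmless discrepancy with the stated formula that is absorbed into the constants of Lemma~\ref{CurvatureLemma3}.
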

\begin{proof}
We have
\begin{align*}
&\frac{d}{dt}K_{osc}=\frac{d}{dt}L\cdot\intcurve{\oo{\kappa-\bar{\kappa}}^{2}}+L\cdot\frac{d}{dt}\intcurve{\oo{\kappa-\bar{\kappa}}^{2}}\\
&=-\llll{\kappa_{s^{p}}}_{2}^{2}\intcurve{\oo{\kappa-\bar{\kappa}}^{2}}+L\Biggl[2\intcurve{\oo{\kappa-\bar{\kappa}}\kappa_{t}}+\oo{-1}^{p+1}\intM{\oo{\kappa-\bar{\kappa}}^{2}\cdot\kappa\cdot\kappa_{s^{2p}}}\Biggr]\\
&=-\frac{\llll{k_{s^{p}}}_{2}^{2}}{L}K_{osc}+2\oo{-1}^{p}L\intcurve{\oo{\kappa-\bar{\kappa}}\oo{\kappa_{s^{2p+2}}+\kappa^{2}\cdot\kappa_{s^{2p}}}}\\
&+\oo{-1}^{p+1}L\intcurve{\oo{\kappa-\bar{\kappa}}^{2}\cdot\kappa\cdot\kappa_{s^{2p}}}\\
&=-\frac{\llll{\kappa_{s^{p}}}_{2}^{2}}{L}K_{osc}-2L\llll{\kappa_{s^{p+1}}}_{2}^{2}+2\oo{-1}^{p}L\intcurve{\oo{\kappa-\bar{\kappa}}\cdot\kappa^{2}\cdot\kappa_{s^{2p}}}\\
&+\oo{-1}^{p+1}L\intcurve{\oo{\kappa-\bar{\kappa}}^{2}\cdot\kappa\cdot \kappa_{s^{2p}}}.
\end{align*}
Hence
\begin{align*}
&\frac{d}{dt}K_{osc}+\frac{\llll{\kappa_{s^{p}}}_{2}^{2}}{L}K_{osc}+2L\llll{\kappa_{s^{p+1}}}_{2}^{2}\\
&=2\oo{-1}^{p}L\intcurve{\oo{\kappa-\bar{\kappa}}\cdot\cc{\oo{\kappa-\bar{\kappa}}^{2}+2\bar{\kappa}\oo{\kappa-\bar{\kappa}}+\bar{\kappa}^{2}}\cdot \kappa_{s^{2p}}}\\
&+\oo{-1}^{p+1}L\intcurve{\oo{\kappa-\bar{\kappa}}^{2}\cdot\cc{\oo{\kappa-\bar{\kappa}}+\bar{\kappa}}\cdot\kappa_{s^{2p}}}\\
&=\oo{-1}^{p}L\intcurve{\cc{\oo{\kappa-\bar{\kappa}}^{3}+\bar{\kappa}\oo{\kappa-\bar{\kappa}}^{2}+2\bar{\kappa}^{2}\oo{\kappa-\bar{\kappa}}}\oo{\kappa-\bar{\kappa}}_{s^{2p}}}\\
&=L\intcurve{\cc{\oo{\kappa-\bar{\kappa}}^{3}+\bar{\kappa}\oo{\kappa-\bar{\kappa}}^{2}}_{s^{p}}\oo{\kappa-\bar{\kappa}}_{s^{p}}}+2\bar{\kappa}^{2}L\llll{\kappa_{s^{p}}}_{2}^{2}\\
&=L\intcurve{\cc{\oo{\kappa-\bar{\kappa}}^{3}+\bar{\kappa}\oo{\kappa-\bar{\kappa}}^{2}}_{s^{p}}\oo{\kappa-\bar{\kappa}}_{s^{p}}}+\frac{8\omega^{2}\pi^{2}}{L}\llll{\kappa_{s^{p}}}_{2}^{2}\\
&=L\intcurve{\cc{\oo{\kappa-\bar{\kappa}}^{3}+\bar{\kappa}\oo{\kappa-\bar{\kappa}}^{2}}_{s^{p}}\oo{\kappa-\bar{\kappa}}_{s^{p}}}-8\omega^{2}\pi^{2}\frac{d}{dt}\ln{L}
\end{align*}
Here we have used Corollary $\ref{PrelimCor1}$ and Lemma $\ref{CurvatureLemma1}$ in the penultimate step. Rearranging then yields the desired result.
\end{proof}
\begin{lemma}\label{CurvatureLemma3}
\begin{equation}
L\intcurve{\cc{\oo{\kappa-\bar{\kappa}}^{3}+\bar{\kappa}\oo{\kappa-\bar{\kappa}}^{2}}_{s^{p}}\oo{\kappa-\bar{\kappa}}_{s^{p}}}\leq L\oo{c_{1}K_{osc}+c_{2}\sqrt{K_{osc}}}
\end{equation}
for some universal constants $c_{1},c_{2}>0$. Here $c_{i}=c_{i}\oo{p}$.
\end{lemma}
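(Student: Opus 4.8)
The plan is to expand the integrand with the Leibniz rule and then estimate each resulting monomial by the interpolation inequalities for mean-zero periodic functions (iterates of Lemmas~\ref{AppendixLemma1} and~\ref{AppendixLemma2}) together with Young's inequality, tracking homogeneity throughout.

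First I would set $u:=\kappa-\bar\kappa$ and record the facts I will use: $\intcurve{u}=0$; periodicity gives $\intcurve{u_{s^\ell}}=0$ for every $\ell\ge 0$ and $u_{s^\ell}=\kappa_{s^\ell}$ for $\ell\ge 1$; $K_{osc}=L\llll{u}_2^2$; $\norm{\bar\kappa}=2\pi\norm{\omega}/L$; and $4\pi A(\gamma_0)\le L^2\le L(\gamma_0)^2$ by the isoperimetric inequality and Corollary~\ref{PrelimCor1}. Since $\bar\kappa$ is constant in $s$, the Leibniz rule writes $\cc{u^3+\bar\kappa u^2}_{s^p}$ as a $p$-dependent linear combination of products $u_{s^i}u_{s^j}u_{s^k}$ with $i+j+k=p$ and of $\bar\kappa\,u_{s^i}u_{s^j}$ with $i+j=p$, so after multiplying by $u_{s^p}$ the integrand is a finite combination of quartic monomials $u_{s^i}u_{s^j}u_{s^k}u_{s^p}$ ($i+j+k=p$) and cubic monomials $\bar\kappa\,u_{s^i}u_{s^j}u_{s^p}$ ($i+j=p$), and in every monomial the derivative orders sum to $2p$. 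It thus suffices to bound $L$ times the integral of each such monomial.

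For a generic monomial I would apply H\"older's inequality to split it into $L^{q_1}\cdots L^{q_r}$ norms with $\sum 1/q_\ell=1$ (here $r=4$ or $r=3$), and then bound each $\llll{u_{s^m}}_{q_m}$ by the Gagliardo--Nirenberg--Sobolev inequality for mean-zero periodic functions, interpolating between the top norm $\llll{\kappa_{s^{p+1}}}_2$ and $\llll{u}_2=\sqrt{K_{osc}/L}$. The key homogeneity count is that, because the derivative orders total $2p$ over $r$ factors, the interpolation exponents falling on $\llll{\kappa_{s^{p+1}}}_2$ sum to $\tfrac{2p+r/2-1}{p+1}$, which is strictly less than $2$ since $r\le 4$, while the overall homogeneity of the monomial in $u$ is $r$. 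Young's inequality then turns each monomial into a small multiple of $L\llll{\kappa_{s^{p+1}}}_2^2$ plus a constant times $L$ times a fixed positive power of $K_{osc}$: the quartic monomials produce powers of $K_{osc}$ no smaller than $1$, giving (under the standing smallness $K_{osc}\le 1$) the $c_1 K_{osc}$ term, while the $\bar\kappa$-weighted cubic monomials, carrying one fewer factor of $\kappa-\bar\kappa$, produce smaller powers that are dominated by $\sqrt{K_{osc}}$, giving the $c_2\sqrt{K_{osc}}$ term. Summing the finitely many monomials, absorbing the accumulated small multiple of $L\llll{\kappa_{s^{p+1}}}_2^2$ into the dissipative term $2L\llll{\kappa_{s^{p+1}}}_2^2$ already present on the left of Lemma~\ref{CurvatureLemma2} (so that it does not appear in the stated estimate), and folding the remaining powers of $L$ into $c_1,c_2$ via the bounds on $L$ above, yields the claim.

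The main obstacle I expect is the uniform homogeneity bookkeeping: one must verify, over every distribution $i+j+k=p$ (respectively $i+j=p$) of the $2p$ derivatives among the factors and over the admissible H\"older exponents, that the top-order norm $\llll{\kappa_{s^{p+1}}}_2$ always enters with exponent strictly below $2$ (so that Young's inequality applies against the dissipation) and that the residual power of $K_{osc}$ never drops below $\tfrac12$. Granting this, the Leibniz expansion, the individual Gagliardo--Nirenberg estimates, and the concluding Young step are routine.
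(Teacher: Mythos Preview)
Your Leibniz/H\"older/Gagliardo--Nirenberg setup is correct, and your exponent count (the top norm $\llll{\kappa_{s^{p+1}}}_2$ appears with total power $\eta=\frac{2p+r/2-1}{p+1}<2$) is exactly the one in Lemma~\ref{AppendixLemma5}. The gap is in how you pass from $\llll{\kappa_{s^{p+1}}}_2^{\eta}$ to $\llll{\kappa_{s^{p+1}}}_2^{2}$.

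First, note that the right-hand side of the lemma as printed is missing a factor $\llll{\kappa_{s^{p+1}}}_2^2$: Corollary~\ref{CurvatureCorollary1} only follows if the bound reads
\[
L\intcurve{\cc{\oo{\kappa-\bar{\kappa}}^{3}+\bar{\kappa}\oo{\kappa-\bar{\kappa}}^{2}}_{s^{p}}\oo{\kappa-\bar{\kappa}}_{s^{p}}}\leq L\oo{c_{1}K_{osc}+c_{2}\sqrt{K_{osc}}}\llll{\kappa_{s^{p+1}}}_{2}^{2},
\]
and indeed the stated version is scale-inconsistent. So the target is a \emph{multiplicative} bound in $\llll{\kappa_{s^{p+1}}}_2^2$.

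Your Young step instead gives an \emph{additive} bound: for a quartic monomial one obtains
\[
L\intcurve{\norm{P_{4}^{2p}\oo{\kappa-\bar\kappa}}}\le \varepsilon L\llll{\kappa_{s^{p+1}}}_2^{2}+C_\varepsilon\,L^{-\oo{2p+2}}K_{osc}^{\,2p+3},
\]
and similarly for the cubic terms. The residual carries a genuinely negative power of $L$, and absorbing it via $4\pi A(\gamma_0)\le L^2\le L(\gamma_0)^2$, as you propose, forces $c_1,c_2$ to depend on $\gamma_0$. That contradicts $c_i=c_i(p)$ and would propagate to make $\varepsilon_0$ in Theorem~\ref{TM1} depend on the initial data. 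Moreover, after absorbing the $\varepsilon L\llll{\kappa_{s^{p+1}}}_2^2$ into the dissipation of Lemma~\ref{CurvatureLemma2}, the surviving inequality has a strictly positive right-hand side and is no longer of the form needed in Proposition~\ref{FlowProp2}.

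The fix is to drop Young entirely. After Lemma~\ref{AppendixLemma5} with $K=p+1$ you have, schematically,
\[
L\intcurve{\norm{P_{\nu}^{2p}\oo{\kappa-\bar\kappa}}}\le c\oo{p}\,L^{2-2p-\nu}\,K_{osc}^{\frac{\nu-\eta}{2}}\oo{L^{2p+3}\llll{\kappa_{s^{p+1}}}_2^{2}}^{\frac{\eta}{2}}.
\]
Now use the Wirtinger inequality (Lemma~\ref{AppendixLemma1}, iterated $p+1$ times; see \eqref{AppendixLemma5,3}) in the direction $K_{osc}\le c\oo{p}L^{2p+3}\llll{\kappa_{s^{p+1}}}_2^{2}$ to replace $K_{osc}^{\frac{2-\eta}{2}}$ by $\oo{L^{2p+3}\llll{\kappa_{s^{p+1}}}_2^{2}}^{\frac{2-\eta}{2}}$. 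This raises the exponent on $\llll{\kappa_{s^{p+1}}}_2$ from $\eta$ to $2$ and the powers of $L$ collapse exactly:
\[
L\intcurve{\norm{P_{\nu}^{2p}\oo{\kappa-\bar\kappa}}}\le c\oo{p}\,L\,K_{osc}^{\frac{\nu-2}{2}}\,\llll{\kappa_{s^{p+1}}}_2^{2}.
\]
Taking $\nu=4$ for the quartic terms gives $K_{osc}$, while the cubic terms carry an extra $|\bar\kappa|=2\pi\omega/L$ and with $\nu=3$ give $\sqrt{K_{osc}}$, yielding the stated $c_1K_{osc}+c_2\sqrt{K_{osc}}$ with constants depending only on $p$.
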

\begin{proof}
The proof follows from an application of a number of interpolation inequalities which can be found in \cite{DKS}. It has been included in the Appendix for the convenience of the reader.
\end{proof}
\begin{corollary}\label{CurvatureCorollary1}
Suppose $\gamma:\mathbb{S}^{1}\times\left[0,T\right)\rightarrow\mathbb{R}^{2}$
solves $\eqref{FlowDefinition}$. Then
\[
\frac{d}{dt}\oo{K_{osc}+8\omega^{2}\pi^{2}\ln{L}}+\frac{\llll{\kappa_{s^{p}}}_{2}^{2}}{L}K_{osc}+L\oo{2-c_{1}K_{osc}-c_{2}\sqrt{K_{osc}}}\llll{\kappa_{s^{p+1}}}_{2}^{2}\leq0.
\]
Here $c_{1}\oo{p},c_{2}\oo{p}$ are the universal constants given in Lemma $\ref{CurvatureLemma3}$. Moreover, if there exists a $T^{*}$ such that for $t\in\left[0,T^{*}\right)$
\begin{equation}
K_{osc}\oo{t}\leq\frac{8c_{1}+2c_{2}^{2}-2c_{2}\sqrt{8c_{1}+c_{2}^{2}}}{4c_{1}^{2}}=2K^{*},\label{CurvatureCorollary1,0}
\end{equation}
then during this time the estimate
\begin{equation}
K_{osc}+8\omega^{2}\pi^{2}\ln{L}+\int_{0}^{t}{K_{osc}\frac{\llll{\kappa_{s}^{p}}_{2}^{2}}{L}\,d\tau}\leq K_{osc}\oo{0}+8\omega^{2}\pi^{2}\ln{L\oo{0}}\label{CurvatureCorollary1,1}
\end{equation}
holds.

\end{corollary}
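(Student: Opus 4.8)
The plan is to read off the first (differential) inequality by substituting the interpolation estimate of Lemma \ref{CurvatureLemma3} into the exact evolution identity of Lemma \ref{CurvatureLemma2}, and then to deduce the integrated bound \eqref{CurvatureCorollary1,1} by a one-line ODE comparison once the smallness hypothesis \eqref{CurvatureCorollary1,0} is assumed. First I would start from the identity of Lemma \ref{CurvatureLemma2}, which expresses
\[
\frac{d}{dt}\oo{K_{osc}+8\omega^{2}\pi^{2}\ln L}+\frac{\llll{\kappa_{s^{p}}}_{2}^{2}}{L}K_{osc}+2L\llll{\kappa_{s^{p+1}}}_{2}^{2}
\]
as the single term $L\intcurve{\cc{\oo{\kappa-\bar{\kappa}}^{3}+\bar{\kappa}\oo{\kappa-\bar{\kappa}}^{2}}_{s^{p}}\oo{\kappa-\bar{\kappa}}_{s^{p}}}$. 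Bounding this term from above by Lemma \ref{CurvatureLemma3} and transposing the resulting multiple of $L\llll{\kappa_{s^{p+1}}}_{2}^{2}$ to the left-hand side converts the coefficient $2$ of $L\llll{\kappa_{s^{p+1}}}_{2}^{2}$ into $2-c_{1}K_{osc}-c_{2}\sqrt{K_{osc}}$, which is precisely the claimed differential inequality. No sign information is used at this stage; it is pure algebraic rearrangement.

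For the integrated estimate I would then observe that $2K^{*}$ in \eqref{CurvatureCorollary1,0} is exactly the value of $K_{osc}$ at which $2-c_{1}K_{osc}-c_{2}\sqrt{K_{osc}}$ vanishes: setting $x=\sqrt{K_{osc}}$ and solving $c_{1}x^{2}+c_{2}x-2=0$ by the quadratic formula gives $x=\frac{-c_{2}+\sqrt{c_{2}^{2}+8c_{1}}}{2c_{1}}$, and squaring recovers the right-hand side of \eqref{CurvatureCorollary1,0}. Hence on any $[0,T^{*})$ with $K_{osc}\le 2K^{*}$ the coefficient $2-c_{1}K_{osc}-c_{2}\sqrt{K_{osc}}$ is nonnegative, so the term $L\oo{2-c_{1}K_{osc}-c_{2}\sqrt{K_{osc}}}\llll{\kappa_{s^{p+1}}}_{2}^{2}$ may be discarded from the differential inequality, leaving
\[
\frac{d}{dt}\oo{K_{osc}+8\omega^{2}\pi^{2}\ln L}+\frac{\llll{\kappa_{s^{p}}}_{2}^{2}}{L}K_{osc}\le 0.
\]
Since $\gamma$ is smooth on $\mathbb{S}^{1}\times[0,T)$ every quantity here is a smooth function of $t$, so I may integrate from $0$ to $t$; moving the (nonnegative) integral term back to the left yields exactly \eqref{CurvatureCorollary1,1}.

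I do not expect a genuine obstacle: the two substantive ingredients — the evolution identity of Lemma \ref{CurvatureLemma2} and the Dziuk--Kuwert--Sch\"atzle--type interpolation bound of Lemma \ref{CurvatureLemma3} — are already established, and what remains is the algebraic rearrangement in the first step together with the elementary quadratic-formula computation identifying the threshold $2K^{*}$. The one point requiring a little care is the bookkeeping of signs and the exact coefficient $2-c_{1}K_{osc}-c_{2}\sqrt{K_{osc}}$, so that the explicit constant in \eqref{CurvatureCorollary1,0} comes out on the nose.
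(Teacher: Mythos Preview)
Your proposal is correct and follows exactly the approach of the paper: combine Lemma~\ref{CurvatureLemma2} with Lemma~\ref{CurvatureLemma3} to obtain the differential inequality, then use the smallness hypothesis to drop the $L\oo{2-c_{1}K_{osc}-c_{2}\sqrt{K_{osc}}}\llll{\kappa_{s^{p+1}}}_{2}^{2}$ term and integrate. Your added observation that $2K^{*}$ is precisely the positive root of $2-c_{1}K_{osc}-c_{2}\sqrt{K_{osc}}=0$, verified via the quadratic formula in $x=\sqrt{K_{osc}}$, makes explicit what the paper leaves implicit in its one-line ``Using the assumed smallness of $K_{osc}$ then gives the second.''
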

\begin{proof}
Combining Lemma $\ref{CurvatureLemma2}$ and Lemma $\ref{CurvatureLemma3}$ immediately gives the first result. Using the assumed smallness of $K_{osc}$ then gives the second. 
\end{proof}

Note that although Corollary $\ref{CurvatureCorollary1}$ implies that the
normalised oscillation of curvature remains bounded if initially sufficiently
small, it does not seem to give tight control of the quantity per se, because
we already know that $\ln{L}$ (on the left hand side of
$\oo{\ref{CurvatureCorollary1,1}}$) is decreasing, and so without further
analysis, one might think that $K_{osc}$ could be static in time (or even
worse, \emph{increasing}).

However, note by the isoperimetric inequality that for any closed curve solving
$\eqref{FlowDefinition}$ we have
\[
\frac{L^{2}\oo{\gamma}}{4\pi A\oo{\gamma}}\geq1,\text{  and so  }\frac{1}{L\oo{\gamma}}\leq\frac{1}{\sqrt{4\pi A\oo{\gamma}}}.
\]
It follows that for any $t\in\left[0,T\right)$,
\begin{equation}
\frac{L\oo{\gamma_{0}}}{L\oo{\gamma_{t}}}\leq\frac{L\oo{\gamma_{0}}}{\sqrt{4\pi A\oo{\gamma_{t}}}}=\frac{L\oo{\gamma_{0}}}{\sqrt{4\pi A\oo{\gamma_{0}}}}=\sqrt{I\oo{\gamma_{0}}}.\label{IsoperimetricConsequence}
\end{equation}
Here we have used the fact that by Corollary $\ref{PrelimCor1}$, the enclosed area of our family of immersed curves is static in time.

So, the quantity $\frac{L\oo{\gamma_{0}}}{L\oo{\gamma_{t}}}$ can be controlled over $\left[0,T\right)$ a priori by assuming that $\gamma_{0}$ is ``sufficiently circular''. In particular, since we may choose $\gamma_{0}$ such that $I\oo{\gamma_{0}}$ is as close to $1$ as we wish (and so $\frac{L\oo{\gamma_{0}}}{L\oo{\gamma_{t}}}$ remains close to $1$ as well),  equation $\oo{\ref{CurvatureCorollary1,1}}$ becomes much more appealing because it can be rearranged to give
\begin{equation}
K_{osc}+\int_{0}^{t}{K_{osc}\frac{\llll{\kappa_{s^{p}}}_{2}^{2}}{L}\,d\tau}\leq K_{osc}\oo{0}+8\omega^{2}\pi^{2}\ln{\sqrt{I\oo{0}}}=K_{osc}\oo{0}+4\omega^{2}\pi^{2}\ln\oo{{I\oo{0}}}.\label{OscillationOfCurvature4}
\end{equation}
This of course is an improvement upon  Corollary $\ref{CurvatureCorollary1}$ because it tells us that $K_{osc}$ can not get larger than the right hand side of the inequality. One problem is that this inequality as it stands is only valid whilst $K_{osc}$ satisfies $\oo{\ref{CurvatureCorollary1,0}}$, and it is not clear from $\oo{\ref{OscillationOfCurvature4}}$ that this smallness condition should hold for the duration of the flow.

 A little bit of tweaking will give us tighter control over $K_{osc}$ for the duration of the flow, and we present this result in the following proposition.
\begin{proposition}\label{FlowProp2}
Let $\gamma:\mathbb{S}^{1}\times\left[0,T\right)\rightarrow\mathbb{R}^{2}$
solve $\eqref{FlowDefinition}$. Additionally, suppose that $\gamma_{0}$ is a
simple closed curve with $\omega=1$, satisfying
\[
K_{osc}\oo{0}\leq K^{\star}\,\,\text{and}\,\,I\oo{0}\leq e^{\frac{K^{\star}}{8\pi^{2}}}.
\]
Then
\[
K_{osc}\oo{t}\leq 2K^{\star}\,\,\text{for}\,\,t\in\left[0,T\right).
\]
\end{proposition}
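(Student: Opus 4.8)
The plan is to run a continuity (bootstrap) argument driven by Corollary~\ref{CurvatureCorollary1}. The mechanism is this: whenever $K_{osc}$ is already known to satisfy the threshold bound $(\ref{CurvatureCorollary1,0})$ (whose right-hand side is $2K^\star$, a positive quantity), the corollary together with the isoperimetric estimate $(\ref{IsoperimetricConsequence})$ forces $K_{osc}$ to lie \emph{strictly} below that threshold --- and this strict improvement is exactly what lets the set of ``good'' times be simultaneously open and closed in $[0,T)$.

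First I would set up the scaffold. Since $\gamma$ is a smooth solution on $[0,T)$, the function $t\mapsto K_{osc}(t)$ is continuous, and $K_{osc}(0)\le K^\star<2K^\star$, so
\[
T^\star := \sup\bigl\{\, t\in[0,T)\ :\ K_{osc}(\tau)\le 2K^\star\ \text{for all}\ \tau\in[0,t]\,\bigr\}
\]
is positive. On $[0,T^\star)$ the smallness condition $(\ref{CurvatureCorollary1,0})$ holds, so Corollary~\ref{CurvatureCorollary1} yields estimate $(\ref{CurvatureCorollary1,1})$. Taking $\omega=1$, discarding the nonnegative term $\int_0^t K_{osc}\,\llll{\kappa_{s^{p}}}_2^2/L\,d\tau$, and using that $A$ is constant along the flow (Corollary~\ref{PrelimCor1}) together with $(\ref{IsoperimetricConsequence})$ in the form $8\pi^2(\ln L(0)-\ln L(t))\le 4\pi^2\ln I(0)$, I arrive at $(\ref{OscillationOfCurvature4})$: $K_{osc}(t)\le K_{osc}(0)+4\pi^2\ln I(0)$ for $t\in[0,T^\star)$. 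Feeding in the hypotheses $K_{osc}(0)\le K^\star$ and $\ln I(0)\le K^\star/(8\pi^2)$ then gives $K_{osc}(t)\le K^\star+K^\star/2=\frac{3}{2}K^\star$ on $[0,T^\star)$.

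Finally I would close the bootstrap. If $T^\star<T$, continuity of $K_{osc}$ forces $K_{osc}(T^\star)\le\frac{3}{2}K^\star<2K^\star$, hence $K_{osc}\le 2K^\star$ on a strictly larger interval $[0,T^\star+\delta)$, contradicting the maximality of $T^\star$. Therefore $T^\star=T$ and $K_{osc}(t)\le\frac{3}{2}K^\star\le 2K^\star$ for all $t\in[0,T)$, which is the claim. The only genuine obstacle is the circularity intrinsic to a bootstrap of this type: the a priori estimate $(\ref{CurvatureCorollary1,1})$ may be invoked only where $(\ref{CurvatureCorollary1,0})$ has \emph{already} been verified, so everything hinges on the strict gap between the derived bound $\frac{3}{2}K^\star$ and the threshold $2K^\star$ --- precisely the gap that the two smallness assumptions are calibrated to produce. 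Once that gap is in hand, continuity does the rest, and the remaining steps are elementary.
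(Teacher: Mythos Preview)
Your argument is correct and is essentially the same continuity/bootstrap argument the paper gives: both define the maximal time $T^\star$ on which $K_{osc}\le 2K^\star$, invoke Corollary~\ref{CurvatureCorollary1} together with the isoperimetric bound $(\ref{IsoperimetricConsequence})$ to obtain $(\ref{OscillationOfCurvature4})$ and hence $K_{osc}\le\frac{3}{2}K^\star$ on that interval, and then use the strict gap $\frac{3}{2}K^\star<2K^\star$ to contradict maximality of $T^\star$. Your write-up is in fact a bit more explicit about the role of continuity and the derivation of $(\ref{OscillationOfCurvature4})$ from $(\ref{CurvatureCorollary1,1})$ than the paper's, but there is no substantive difference.
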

\begin{proof}
Suppose for the sake of contradiction that $K_{osc}$ does \emph{not} remain bounded by $2K^{\star}$ for the duration of the flow. Then we can find a maximal $T^{\star}<T$ such that
\[
K_{osc}\oo{t}\leq 2K^{\star}\text{  for  }t\in\left[0,T^{\star}\right).
\]
Then, by $\oo{\ref{OscillationOfCurvature4}}$, the following identity holds for $t\in\left[0,T^{\star}\right)$:
\begin{equation}
K_{osc}\oo{t}\leq K_{osc}\oo{0}+4\pi^{2}\ln\oo{{I\oo{0}}}\leq K^{\star}+4\pi^{2}\ln\oo{e^{\frac{K^{\star}}{8\pi^{2}}}}=\frac{3K^{\star}}{2}\text{ for }t\in\left[0,T^{\star}\right).\label{FlowProp2,1}
\end{equation}
We have also used the fact that Lemma $\ref{CurvatureLemma1}$ ensures that $\omega=1$ for the duration of the flow.

Taking $t\nearrow T$ in inequality $\oo{\ref{FlowProp2,1}}$ gives $K_{osc}\leq\frac{3K^{\star}}{2}<2K^{\star}$, meaning that by continuity, $K_{osc}\leq 2K^{\star}$ on some larger time interval $\left[0,T^{\star}+\delta\right)$. But $\left[0,T^{\star}\right)$ was chosen to be the largest time interval containing $0$ such that $K_{osc}$ remains bounded by $2K^{\star}$ and so we have arrived at a contradiction. Thus our assumption that $T<T^{\star}$ must have been false, and the result of the proposition follows.
\end{proof}

\begin{corollary}\label{CurvatureCorollary2}
Let $\gamma:\mathbb{S}^{1}\times\left[0,T\right)\rightarrow\mathbb{R}^{2}$
solve $\eqref{FlowDefinition}$. Additionally, suppose that $\gamma_{0}$ is a
simple embedded closed curve satisfying
\[
K_{osc}\oo{0}\leq K^{\star}<\epsilon_{0}\,\,\text{and}\,\,I\oo{0}\leq e^{\frac{K^{\star}}{8\pi^{2}}}\leq e^{\frac{\varepsilon_{0}}{8\pi^{2}}},
\]
where $\epsilon_{0}<32-2\pi^{2}$ is a sufficiently small constant. Then $\gamma$ remains embedded on $\left[0,T\right)$. 
\end{corollary}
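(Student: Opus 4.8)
The plan is to combine the uniform smallness of $K_{osc}$ furnished by Proposition \ref{FlowProp2} with a purely geometric embeddedness criterion for closed curves of rotation index one. Choosing the unit normal suitably we may assume $\intcurve{\kappa}\big|_{t=0}=2\pi$ — an embedded closed curve has rotation index $\pm1$, and \eqref{FlowDefinition} is insensitive to this choice since reversing $\nu$ reverses $\kappa$ while leaving $\kappa_{s^{2p}}\nu$ unchanged — and then Lemma \ref{CurvatureLemma1} preserves this, so $\bar\kappa=2\pi/L$ for all $t\in[0,T)$. The hypotheses $K_{osc}(0)\le K^{\star}$ and $I(0)\le e^{K^{\star}/8\pi^{2}}$ are exactly those of Proposition \ref{FlowProp2}, whence $K_{osc}(t)\le 2K^{\star}<2\epsilon_0$ on $[0,T)$. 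It therefore suffices to establish the following time-independent statement and apply it with $\eta=\gamma(\cdot,t)$ for each $t$: \emph{if $\epsilon_0$ is small enough} (and $\epsilon_0<32-2\pi^{2}$ will be admissible) \emph{then every closed immersed curve $\eta$ with $\intcurve{\kappa_\eta}=2\pi$ and $K_{osc}(\eta)<2\epsilon_0$ is embedded}.

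To prove this I would work with the turning angle $\theta$ ($\theta_{s}=\kappa$) and set $\phi(s)=\theta(s)-\theta(0)-\bar\kappa s$, so that $\phi$ is $L$-periodic with $\phi(0)=0$ and $\phi'=\kappa-\bar\kappa$. Since $\intcurve{(\kappa-\bar\kappa)}=0$, for any subarc $\operatorname{osc}_{[a,b]}\phi\le\int_{a}^{b}|\kappa-\bar\kappa|\,ds\le |b-a|^{1/2}\big(\tfrac1L K_{osc}(\eta)\big)^{1/2}$, and globally $\operatorname{osc}\phi\le\tfrac12\sqrt{K_{osc}(\eta)}$. Suppose, for contradiction, $\eta(s_{1})=\eta(s_{2})$ with $0\le s_{1}<s_{2}\le L$, $\ell:=s_{2}-s_{1}\in(0,L)$. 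Identifying $\mathbb{R}^{2}$ with $\mathbb{C}$, $\int_{s_{1}}^{s_{2}}e^{i\theta}\,ds=\eta(s_{2})-\eta(s_{1})=0$; pulling out $e^{i\theta(s_{1})}$ and comparing with the secant of a circle of circumference $L$ using $|1-e^{i\psi}|\le|\psi|$ gives
\[
\frac{L}{\pi}\Big|\sin\frac{\pi\ell}{L}\Big|=\Big|\int_{s_{1}}^{s_{2}}e^{i\bar\kappa(s-s_{1})}\,ds\Big|\le \ell\cdot\operatorname{osc}_{[s_{1},s_{2}]}\phi\,.
\]
Running the same estimate on the complementary subarc $[s_{2},s_{1}+L]$, using $\int_{[s_{1},s_{2}]}(\kappa-\bar\kappa)^{2}+\int_{[s_{2},s_{1}+L]}(\kappa-\bar\kappa)^{2}=K_{osc}(\eta)/L$ to control the two oscillation terms, and the lower bounds for $(\sin x)/x$ on $(0,\pi)$, one finds the resulting system of inequalities has no admissible $\ell\in(0,L)$ once $K_{osc}(\eta)<2\epsilon_0$; hence $\eta$ is embedded. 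Up to the explicit constant, this is the embeddedness lemma used for the curve diffusion flow in \cite{Wkosc}, so the corollary may alternatively be obtained by quoting it.

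The qualitative assertion (``non-embedded plus small $K_{osc}$ is impossible'') is soft; the real work is the explicit threshold, and the enemy is a very short closed subarc — a small loop — where $\ell/L$ is near $0$, the circular secant $\tfrac{L}{\pi}\sin(\pi\ell/L)$ is itself small, and the crude comparison above is vacuous. There one instead uses that a short closed subarc forces $\theta$ to turn by essentially a full $2\pi$ over an arc of length $\ell$, hence $\int_{[s_{1},s_{2}]}(\kappa-\bar\kappa)^{2}\gtrsim(2\pi-\bar\kappa\ell)^{2}/\ell\to\infty$ as $\ell\to0$, contradicting $K_{osc}(\eta)<2\epsilon_0$ and bounding $\ell$ (and, by the periodicity of $\phi$, also $L-\ell$) away from $0$. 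Balancing this short-arc estimate against the secant comparison on the bulk of the curve, and optimising the split of $K_{osc}(\eta)$ between the two arcs, is precisely the delicate step, and it produces an explicit window of admissible $\epsilon_0$ in which $32-2\pi^{2}$ sits; since $\epsilon_0<32-2\pi^{2}$ is a fortiori small enough for Proposition \ref{FlowProp2}, the hypotheses are consistent and the proof closes.
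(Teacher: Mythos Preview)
Your overall strategy matches the paper's: invoke Proposition~\ref{FlowProp2} to keep $K_{osc}(t)\le 2K^{\star}$ for all $t\in[0,T)$, then apply a purely static embeddedness criterion for closed curves of rotation index~$1$ with small $K_{osc}$. Where you diverge is in how that criterion is supplied. The paper does not attempt a secant/turning-angle argument at all; it simply quotes the quantitative inequality
\[
K_{osc}(\gamma)\ge 16m^{2}-4\omega^{2}\pi^{2}
\]
from \cite{Wkosc} (Theorem~16 there), where $m=\sup_{x}\#\gamma^{-1}(x)$ is the maximal multiplicity. With $\omega=1$ and $K_{osc}<2K^{\star}<2(32-2\pi^{2})=64-4\pi^{2}$ this yields $m^{2}<4$, hence $m\le 1$ and embeddedness. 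The threshold $32-2\pi^{2}$ thus falls out of a single line of algebra rather than an optimisation.

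Your hands-on route is reasonable in spirit but remains a sketch precisely where the constant has to emerge: you assert that balancing the short-arc and bulk estimates ``produces an explicit window of admissible $\epsilon_{0}$ in which $32-2\pi^{2}$ sits'' without actually performing that balance, and the short-arc step (``$\theta$ turns by essentially a full $2\pi$ over the subloop'') needs care, since the tangents at $s_{1}$ and $s_{2}$ need not coincide and the subarc is not a priori a $C^{1}$-closed curve to which one can apply the Umlaufsatz. Since you already note that the result can alternatively be obtained by quoting \cite{Wkosc}, and that is exactly what the paper does, the cleanest completion is to drop the direct argument and cite the multiplicity inequality above; the paper's version buys you the explicit threshold for free.
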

\begin{proof}
Suppose $\gamma:\mathbb{S}^{1}\rightarrow\mathbb{R}^{2}$ is a smooth immersed curve with winding number $\omega=1$. From the Gauss-Bonnet theorem and Lemma $\ref{CurvatureLemma1}$, we know that for $t\in\left[0,T\right)$ the winding number of $\gamma_{t}$ remains the same. Therefore the hypothesis of the corollary implies that $\omega=1$ for the duration of the flow. Define $m\oo{\gamma}$ to be the maximum number of times that $\gamma$ intersects itself in any one point. That is,
\[
m\oo{\gamma}:=\sup_{x\in\mathbb{R}^{2}}\norm{\gamma^{-1}\oo{x}}.
\]
By Theorem $16$ from \cite{Wkosc}, $m$ satisfies the following inequality:
\begin{equation}
K_{osc}\oo{\gamma}\geq16m^{2}-4\omega^{2}\pi^{2}=16m^{2}-4\pi^{2}.\nonumber
\end{equation}
Hence
\begin{equation}
m^{2}\leq\frac{1}{16}\oo{K_{osc}\oo{\gamma}+4\pi^{2}}.\label{CurvatureCorollary2,1}
\end{equation}
Proposition $\ref{FlowProp2}$ then tells us that by the hypothesis of the corollary, $K_{osc}$ remains bounded above by $2K^{\star}$ for the duration of the flow. We can assume without loss of generality that $K^{\star}<32-2\pi^{2}\approx12.26$, and so we have $K_{osc}\oo{\gamma}<64-4\pi^{2}$ on $\left[0,T\right)$. Therefore by $\oo{\ref{CurvatureCorollary2,1}}$ we have
\[
m^{2}<\frac{1}{16}\oo{64-4\pi^{2}+4\pi^{2}}=4\,\,\text{for}\,\,t\in\left[0,T\right),
\]
and embeddedness follows immediately.
\end{proof}

\begin{lemma}\label{LongTimeLemma1}
Suppose $\gamma:\mathbb{S}^{1}\times \left[0,T\right)\rightarrow\mathbb{R}^{2}$
is a maximal solution to $\eqref{FlowDefinition}$. If $T<\infty$ then
\[
\intcurve{\kappa^{2}}\geq c\oo{T-t}^{-1/2\oo{p+1}}
\]
for a universal constant $c>0$.
\end{lemma}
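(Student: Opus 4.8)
The plan is to adapt the blow-up criterion argument of Dziuk--Kuwert--Sch\"atzle \cite{DKS} to the $(2p+2)$-th order equation \eqref{FlowDefinition}. Set $f(t):=\intcurve{\kappa^{2}}$. Under the parabolic rescaling adapted to \eqref{FlowDefinition}, $f$ carries scaling dimension $-1$ while $t$ carries dimension $2(p+1)$, so the asserted estimate $f\ge c\,(T-t)^{-\frac{1}{2(p+1)}}$ is scale invariant; the scaling-consistent engine for such a bound is a differential inequality of the form $\frac{d}{dt}f\le C\,f^{\,2p+3}$, and the bulk of the work is in establishing it.

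First I would compute the evolution of $f$. Applying Lemma \ref{PrelimLemma1} to the function $\kappa^{2}$, inserting the evolution equation \eqref{CurvatureLemma1,1} for $\kappa$, and integrating by parts, one obtains
\[
\frac{d}{dt}\intcurve{\kappa^{2}}=-2\llll{\kappa_{s^{p+1}}}_{2}^{2}+\intcurve{\oo{\kappa^{3}}_{s^{p}}\,\kappa_{s^{p}}}\,.
\]
Expanding $\oo{\kappa^{3}}_{s^{p}}$ by the Leibniz rule, the last term is a finite sum of terms of the schematic shape $\intcurve{\kappa_{s^{a}}\kappa_{s^{b}}\kappa_{s^{c}}\kappa_{s^{p}}}$ with $a+b+c=p$, i.e.\ each term has four curvature factors and $2p$ arc-length derivatives in total. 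The key step is to estimate each such term by the interpolation inequalities of \cite{DKS} (exactly as in the proof of Lemma \ref{CurvatureLemma3}), which gives a bound of the form $\varepsilon\,\llll{\kappa_{s^{p+1}}}_{2}^{2}+C_{\varepsilon}\llll{\kappa}_{2}^{\,4p+6}$. The exponents here are forced by scaling; the decisive point is that interpolation produces the power $2-\tfrac{1}{p+1}<2$ on $\llll{\kappa_{s^{p+1}}}_{2}$, so that Young's inequality lets us absorb the top-order contribution into the term $-2\llll{\kappa_{s^{p+1}}}_{2}^{2}$. Choosing $\varepsilon<2$ and discarding the remaining negative term yields $\frac{d}{dt}f\le C\,f^{\,2p+3}$ with $C=C(p)$, since $\llll{\kappa}_{2}^{\,4p+6}=f^{\,2p+3}$.

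It remains to run the ODE comparison. Because $\gamma$ is a maximal solution and $T<\infty$, a standard bootstrap — controlling each $\llll{\kappa_{s^{m}}}_{2}^{2}$ on $[0,T)$ by its evolution equation together with the same interpolation inequalities, then invoking Sobolev embedding and short-time existence — shows that $f$ cannot stay bounded as $t\nearrow T$; hence $\limsup_{t\to T}f=\infty$. Since $\kappa\not\equiv0$ on a closed curve, $f>0$ throughout $[0,T)$, so $f^{-2(p+1)}$ is $C^{1}$ there and the differential inequality rearranges to $\frac{d}{dt}\bigl(f^{-2(p+1)}\bigr)\ge-2(p+1)C$. Integrating from $t$ up to times $\sigma\nearrow T$ along which $f(\sigma)\to\infty$ gives $f^{-2(p+1)}(t)\le 2(p+1)C\,(T-t)$, that is $f(t)\ge c\,(T-t)^{-\frac{1}{2(p+1)}}$ with $c=\bigl(2(p+1)C\bigr)^{-\frac{1}{2(p+1)}}$, which is the claim.

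The main obstacle is the interpolation bookkeeping: one must check that every term produced by $\oo{\kappa^{3}}_{s^{p}}\kappa_{s^{p}}$ — and, in the bootstrap, by the corresponding higher-order evolution equations — falls in the subcritical range where the exponent on the highest derivative is strictly below $2$, so that it can be absorbed, and that the absorbed pieces combine to give exactly the power $f^{\,2p+3}$ demanded by scaling. The evolution computation and the final ODE argument are routine by comparison.
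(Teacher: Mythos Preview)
Your proposal is correct and follows essentially the same route as the paper: derive the differential inequality $\frac{d}{dt}\llll{\kappa}_{2}^{2}\le C\llll{\kappa}_{2}^{2(2p+3)}$ via the DKS interpolation machinery (your computation $\eta=\frac{2p+1}{p+1}<2$ is exactly the subcriticality check needed), argue by the bootstrap/short-time-existence contradiction that $\llll{\kappa}_{2}^{2}$ must blow up at $T$, and then integrate the ODE to extract the rate. Your write-up is in fact more explicit than the paper's sketch, which simply cites ``an interpolation inequality in the same vein as \cite{DKS}'' and ``an argument similar to Theorem~3.1 of \cite{Wkosc}'' for these two steps.
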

\begin{proof}
Deriving an evolution equation for $\intcurve{\kappa^{2}}$ in the same manner as Lemma $\ref{CurvatureLemma2}$ and using an interpolation inequality in the same vein as \cite{DKS} gives us
\begin{equation}
\frac{d}{dt}\intcurve{\kappa^{2}}+\intcurve{\kappa_{s^{p+1}}^{2}}\leq c\oo{p}\oo{\intcurve{\kappa^{2}}}^{2\oo{m+p}+3},\nonumber
\end{equation}
which implies that
\begin{equation}
-\frac{1}{2\oo{p+1}}\cc{\oo{\intcurve{\kappa^{2}}\Big|_{t=t_{1}}}^{-1/2\oo{p+1}}-\oo{\intcurve{\kappa^{2}}\Big|_{t=t_{0}}}^{-1/2\oo{p+1}}}\leq c\oo{t_{1}-t_{0}}.\label{LongTimeLemma1,1}
\end{equation}
for any times $t_{0}\leq t_{1}$. Note that if $\limsup_{t\rightarrow T}\intcurve{\kappa^{2}}=\infty$, then taking $t_{1}\nearrow T$ in $\oo{\ref{LongTimeLemma1,1}}$ and rearranging will prove the lemma. Assume for the sake of contradiction that $\intcurve{\kappa^{2}}\leq\varrho$ for all $t<T$. By using an argument similar to Theorem $3.1$ of \cite{Wkosc}, we are able to show that the inequality 
\[
\llll{\partial_{u}^{m}\gamma}_{\infty}\leq c_{m}\oo{\varrho,\gamma_{0},T}<\infty
\]
holds for every $m\in\mathbb{N}$ up until time $T$. By short time existence we are then able to extend the life of the flow, contradicting the maximality of $\gamma$. Hence our assumption that $\limsup_{t\rightarrow T}\intcurve{\kappa^{2}}<\infty$ must have been incorrect, and so the limit must diverge. We then conclude the desired result of the lemma from $\oo{\ref{LongTimeLemma1,1}}$.
\end{proof}
\end{section}

\begin{section}{Global analysis}
\begin{corollary}\label{LongTimeCorollary1}
Suppose $\gamma:\mathbb{S}^{1}\times\left[0,T\right)\rightarrow\mathbb{R}^{2}$
solves $\eqref{FlowDefinition}$. Additionally, suppose that $\gamma_{0}$ is
a simple closed curve satisfying
\[
K_{osc}\oo{0}\leq K^{\star}\,\,\text{and}\,\,I\oo{0}\leq e^{\frac{K^{\star}}{8\pi^{2}}}.
\]
Then $T=\infty$.
\end{corollary}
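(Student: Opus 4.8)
The plan is to deduce $T=\infty$ by contradiction from the blowup criterion of Lemma \ref{LongTimeLemma1}. Take $\gamma$ to be the maximal solution and suppose $T<\infty$; then Lemma \ref{LongTimeLemma1} forces $\intcurve{\kappa^{2}}\to\infty$ as $t\nearrow T$. Hence it suffices to show that the hypotheses of the corollary produce an \emph{a priori} bound on $\intcurve{\kappa^{2}}$ that is uniform on $\left[0,T\right)$.

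To obtain such a bound I would decompose $\kappa$ into its average and oscillatory parts. Since $\intcurve{\oo{\kappa-\bar\kappa}}=0$, expanding the square gives
\[
\intcurve{\kappa^{2}}=\intcurve{\oo{\kappa-\bar\kappa}^{2}}+\bar\kappa^{2}L=\frac{K_{osc}}{L}+\frac{4\omega^{2}\pi^{2}}{L},
\]
using $\bar\kappa=2\omega\pi/L$. A simple closed curve has winding number $\pm1$; after fixing orientation we may take $\omega=1$, and Lemma \ref{CurvatureLemma1} (together with Gauss--Bonnet) keeps $\omega=1$ for all $t\in\left[0,T\right)$.

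It remains to control the two terms on the right. For $K_{osc}$ I would invoke Proposition \ref{FlowProp2}, whose hypotheses ($\gamma_{0}$ a simple closed curve with $\omega=1$, $K_{osc}\oo{0}\le K^{\star}$, $I\oo{0}\le e^{K^{\star}/8\pi^{2}}$) are precisely those assumed here; it yields $K_{osc}\oo{t}\le 2K^{\star}$ on $\left[0,T\right)$. For the factor $1/L$ I would use the isoperimetric inequality together with the constancy of the enclosed area from Corollary \ref{PrelimCor1}: $L\oo{\gamma_{t}}^{2}\ge 4\pi A\oo{\gamma_{t}}=4\pi A\oo{\gamma_{0}}>0$, so $1/L\oo{\gamma_{t}}\le\oo{4\pi A\oo{\gamma_{0}}}^{-1/2}$. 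Combining,
\[
\intcurve{\kappa^{2}}\le\frac{2K^{\star}+4\pi^{2}}{\sqrt{4\pi A\oo{\gamma_{0}}}}<\infty
\]
for every $t\in\left[0,T\right)$, contradicting the divergence forced by Lemma \ref{LongTimeLemma1}. Therefore $T=\infty$.

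I do not expect a genuine obstacle: essentially all of the analytic work has already been absorbed into Proposition \ref{FlowProp2} (the uniform $K_{osc}$ estimate) and Lemma \ref{LongTimeLemma1} (the blowup criterion). The only points needing care are checking that Proposition \ref{FlowProp2} really applies — in particular that $\omega=1$ throughout, and that $2K^{\star}$ indeed satisfies the smallness condition $\oo{\ref{CurvatureCorollary1,0}}$ required there — and that $A\oo{\gamma_{0}}>0$, which holds because $\gamma_{0}$ is a simple closed curve and hence encloses positive area.
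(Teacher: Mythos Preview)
Your proof is correct and follows essentially the same approach as the paper: both combine the blowup criterion (Lemma \ref{LongTimeLemma1}), the uniform $K_{osc}$ bound from Proposition \ref{FlowProp2}, the lower length bound $L\ge\sqrt{4\pi A(\gamma_0)}$ from area constancy and the isoperimetric inequality, and the identity $K_{osc}=L\intcurve{\kappa^{2}}-4\pi^{2}$. The only cosmetic difference is that the paper argues $\intcurve{\kappa^{2}}\to\infty\Rightarrow K_{osc}\to\infty$ and contradicts Proposition \ref{FlowProp2}, whereas you argue $K_{osc}\le 2K^{\star}\Rightarrow\intcurve{\kappa^{2}}$ bounded and contradict Lemma \ref{LongTimeLemma1}; these are contrapositives of one another.
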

\begin{proof}
Suppose for the sake of contradiction that $T<\infty$. Then by Lemma $\ref{LongTimeLemma1}$ we have
\[
\intcurve{\kappa^{2}}\geq c\oo{T-t}^{-1/2\oo{p+3}}
\]
and so in particular,
\begin{equation}
\intcurve{\kappa^{2}}\rightarrow\infty\text{  as  }t\rightarrow T.\label{LongTimeCorollary1,1}
\end{equation}
Next note that $\oo{\ref{IsoperimetricConsequence}}$ gives us an absolute lower bound on the length of $\gamma$:
\[
L\oo{\gamma_{t}}\geq\sqrt{4\pi A\oo{\gamma_{0}}}.
\]
Hence we establish the following following bound on $K_{osc}$:
\[
K_{osc}=L\intcurve{\kappa^{2}}-4\pi^{2}\geq\sqrt{4\pi A\oo{\gamma_{0}}}\intcurve{\kappa^{2}}-4\pi^{2}.
\]
Hence it follows from $\oo{\ref{LongTimeCorollary1,1}}$ that
\[
K_{osc}\oo{t}\rightarrow\infty\,\,\text{as}\,\,t\rightarrow T.
\]
But this directly contradicts the results of Proposition $\ref{FlowProp2}$, and so we conclude that our assumption that $T$ was finite must have been incorrect. Thus $T=\infty$.
\end{proof}
Recall we know that if $\gamma:\mathbb{S}^{1}\times\left[0,T\right)\rightarrow\mathbb{R}^{2}$ satisfies the hypothesis of Corollary $\ref{LongTimeCorollary1}$ then $T=\infty$, and then identity $\oo{\ref{OscillationCurvature2}}$ tells us that
\begin{equation}
K_{osc}\in L^{1}\oo{\left[0,\infty\right)},\,\,\text{with}\,\,\int_{0}^{\infty}{K_{osc}\oo{\tau}\,d\tau}\leq \frac{1}{2\oo{p+1}\oo{2\pi}^{2p}}L^{2\oo{p+1}}\oo{0}.\label{OscillationOfCurvatureL1}
\end{equation}
So we can conclude that the ``tail'' of the function $K_{osc}\oo{t}$ must get small as $t\nearrow\infty$. However, at the present time we have not ruled out the possibility that $K_{osc}$ gets smaller and smaller as $t$ gets large, whilst vibrating with higher and higher frequency, remaining in $L^{1}\oo{\left[0,\infty\right)}$ whilst never actually fully dissipating to zero in a smooth sense. To rule out this from happening, it is enough to show that $\norm{\frac{d}{dt}K_{osc}}$ remains bounded by a universal constant for all time. To do so we will need to first show that $\llll{\kappa_{s^{p}}}_{2}^{2}$ remains bounded. We will address this issue with the following proposition.

\begin{proposition}\label{LongTimeProp1}
Suppose $\gamma:\mathbb{S}^{1}\times\left[0,T\right)\rightarrow\mathbb{R}^{2}$
solves $\eqref{FlowDefinition}$ and is simple. There exists a
$\varepsilon_{0}>0$ (with $\varepsilon_{0}\leq K^{\star}$) such that if
\[
K_{osc}\oo{0}<\varepsilon_{0}\text{  and  }I\oo{0}<e^{\frac{\varepsilon_{0}}{8\pi^{2}}}
\]
then $\llll{k_{s^{p}}}_{2}^{2}$ remains bounded for all time. In particular,
\[
\intcurve{\kappa_{s^{p}}^{2}}\leq\tilde{c}\oo{\gamma_{0}}
\]
for some constant $\tilde{c}\oo{\gamma_{0}}$ depending only upon the initial immersion.
\end{proposition}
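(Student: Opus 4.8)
The plan is to derive a differential inequality of the form $\frac{d}{dt}\intcurve{\kappa_{s^{p}}^{2}}+\intcurve{\kappa_{s^{2p+1}}^{2}}\le c(\gamma_{0})\,K_{osc}(t)$ and then simply integrate it in time. The hypotheses (with $\varepsilon_{0}\le K^{\star}$, and a simple closed curve has $\omega=\pm1$, which we normalise to $\omega=1$) put us in the situation of Proposition~\ref{FlowProp2}, so $K_{osc}(t)\le 2K^{\star}$ on $[0,T)$, and of Corollary~\ref{LongTimeCorollary1}, so $T=\infty$. Moreover $\oo{\ref{IsoperimetricConsequence}}$ gives $L(\gamma_{t})\ge\sqrt{4\pi A(\gamma_{0})}$, whence $\bar\kappa=2\pi/L$ and $\llll{\kappa-\bar\kappa}_{2}^{2}=K_{osc}/L$ are both bounded by constants depending only on $\gamma_{0}$. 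We take $\varepsilon_{0}=K^{\star}$.

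First I would compute the evolution of $\intcurve{\kappa_{s^{p}}^{2}}$ exactly as in the proof of Lemma~\ref{CurvatureLemma2}: apply Lemma~\ref{PrelimLemma1} with $f=\kappa_{s^{p}}^{2}$, insert the curvature evolution $\oo{\ref{CurvatureLemma1,1}}$, and commute $\partial_{t}$ past the $p$ arc-length derivatives using $\oo{\ref{PrelimCor1,2}}$. After integrating by parts $p+1$ times in the top-order term, this yields
\[
\frac{d}{dt}\intcurve{\kappa_{s^{p}}^{2}}+2\intcurve{\kappa_{s^{2p+1}}^{2}}=\intcurve{P},
\]
where $\intcurve{P}$ is a finite sum of terms $\bar\kappa^{d}\intcurve{\kappa_{s^{c_{1}}}\cdots\kappa_{s^{c_{N}}}}$ arising from the nonlinearity $\kappa^{2}\kappa_{s^{2p}}$, the arc-length term $\oo{-1}^{p+1}\kappa_{s^{p}}^{2}\kappa\,\kappa_{s^{2p}}$, and the commutator corrections. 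A scaling count gives $N\ge2$, $N+d\ge4$ and $c_{1}+\dots+c_{N}=4p+4-N-d$ in every such term, and further integrations by parts (which preserve $N$, $d$ and $\sum_i c_i$) allow one to assume each $c_{i}\le2p+1$.

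Next I would estimate $\intcurve{P}$ by the interpolation inequalities of \cite{DKS} (the ones already used for Lemma~\ref{CurvatureLemma3}, collected in the Appendix). Each factor carrying at least one derivative equals $(\kappa-\bar\kappa)_{s^{c_{i}}}$ and, like $\kappa-\bar\kappa$ itself, has zero mean on $\gamma$, so Gagliardo--Nirenberg interpolation controls it between $\llll{\kappa_{s^{2p+1}}}_{2}$ and $\llll{\kappa-\bar\kappa}_{2}$, the exponents on the top-order norm summing to at most $(4p+1)/(2p+1)<2$. Young's inequality then gives, for each term,
\[
\bar\kappa^{d}\intcurve{\kappa_{s^{c_{1}}}\cdots\kappa_{s^{c_{N}}}}\le\varepsilon\,\intcurve{\kappa_{s^{2p+1}}^{2}}+c(\varepsilon,\gamma_{0})\,K_{osc}(t),
\]
since every residual is a power at least one of $\llll{\kappa-\bar\kappa}_{2}^{2}=K_{osc}/L$ — which is itself bounded by $2K^{\star}/\sqrt{4\pi A(\gamma_{0})}$ — multiplied by a bounded power of $\bar\kappa$, hence is $\le c(\gamma_{0})K_{osc}(t)$. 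Summing over the finitely many terms and choosing $\varepsilon=\varepsilon(\gamma_{0})$ small enough to absorb at most half of $2\intcurve{\kappa_{s^{2p+1}}^{2}}$, we obtain
\[
\frac{d}{dt}\intcurve{\kappa_{s^{p}}^{2}}+\intcurve{\kappa_{s^{2p+1}}^{2}}\le c(\gamma_{0})\,K_{osc}(t)\qquad\text{on }[0,\infty).
\]

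Finally I would discard the nonnegative term and integrate in time, using the $L^{1}$ bound $\oo{\ref{OscillationOfCurvatureL1}}$ on $K_{osc}$:
\[
\intcurve{\kappa_{s^{p}}^{2}}(t)\le\intcurve{\kappa_{s^{p}}^{2}}(0)+c(\gamma_{0})\int_{0}^{\infty}K_{osc}(\tau)\,d\tau\le\intcurve{\kappa_{s^{p}}^{2}}(0)+\frac{c(\gamma_{0})}{2(p+1)(2\pi)^{2p}}L^{2(p+1)}(\gamma_{0})=:\tilde c(\gamma_{0}),
\]
which is the assertion. The only place that needs genuine work is the middle step: expanding the commutator corrections, integrating by parts so that no factor exceeds order $2p+1$, and checking the exponent count term by term so that the top-order contribution is always absorbable and every residual retains at least one factor of $K_{osc}$. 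It is essential that the interpolation be carried out against the zero-mean quantity $\kappa-\bar\kappa$ rather than against $\kappa$ — otherwise the residual is only bounded by a time-independent constant and integration yields mere linear growth instead of a uniform bound. Apart from this bookkeeping, everything is routine within the Dziuk--Kuwert--Sch\"atzle framework.
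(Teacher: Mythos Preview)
Your proposal is correct and follows the paper's approach: compute the evolution of $\intcurve{\kappa_{s^{p}}^{2}}$, estimate the nonlinear reaction terms via the Dziuk--Kuwert--Sch\"atzle interpolation inequalities applied to $\kappa-\bar\kappa$, absorb the top-order contribution into $-2\intcurve{\kappa_{s^{2p+1}}^{2}}$, and integrate against the $L^{1}$-in-time bound \eqref{OscillationOfCurvatureL1} on $K_{osc}$. The only difference is organisational: the paper isolates the term $2\intcurve{\kappa^{2}\kappa_{s^{2p}}^{2}}$ (handling its $\bar\kappa^{2}$-part separately via Lemma~\ref{AppendixLemma0}) and bounds the remaining $P$-style terms in the multiplicative form $c(p)\big(K_{osc}+\sqrt{K_{osc}}\big)\intcurve{\kappa_{s^{2p+1}}^{2}}$, so that absorption genuinely requires $\varepsilon_{0}$ small; your uniform Young-inequality splitting shifts the smallness onto the free parameter $\varepsilon$ instead, which is why you can take $\varepsilon_{0}=K^{\star}$.
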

\begin{proof}
We first derive the evolution equation for the quantity $\intcurve{\kappa_{s^{p}}^{2}}$. Applying Lemma $\ref{PrelimLemma1}$ along with repeated applications of the formula for the commutator $\cc{\partial_{t},\partial_{s}}$, we have
\begin{align}
\frac{d}{dt}\intcurve{\kappa_{s^{p}}^{2}}&=2\intcurve{\kappa_{s^{p}}\partial_{t}\kappa_{s^{p}}}+\intcurve{\kappa\cdot\kappa_{s^{p}}^{2}\cdot\kappa_{s^{2p}}}\nonumber\\
&=-2\intcurve{\kappa_{s^{2p+1}}^{2}}+2\oo{-1}^{p}\sum_{j=0}^{p}\oo{-1}^{j}\intcurve{\kappa\cdot\kappa_{s^{p-j}}\cdot\kappa_{s^{p+j}}\cdot\kappa_{s^{2p}}}\nonumber\\
&+\oo{-1}^{p+1}\intcurve{\kappa\cdot\kappa_{s^{p}}^{2}\cdot\kappa_{s^{2p}}}\nonumber\\
&=-2\intcurve{\kappa_{s^{2p+1}}^{2}}+2\intcurve{\kappa^{2}\cdot\kappa_{s^{2p}}^{2}}\nonumber\\
&+2\oo{-1}^{p}\sum_{j=0}^{p-1}\oo{-1}^{j}\intcurve{\cc{\oo{\kappa-\bar{\kappa}}+\bar{\kappa}}\oo{\kappa-\bar{\kappa}}_{s^{p-j}}\oo{\kappa-\bar{\kappa}}_{s^{p+j}}\oo{\kappa_{s^{2p}}}}\nonumber\\
&+\oo{-1}^{p+1}\intcurve{\cc{\oo{\kappa-\bar{\kappa}}+\bar{\kappa}}\oo{\kappa-\bar{\kappa}}_{s^{p}}^{2}\oo{\kappa-\bar{\kappa}}_{s^{2p}}}\nonumber\\
&\leq-2\intcurve{\kappa_{s^{2p+1}}^{2}}+2\intcurve{\kappa^{2}\cdot\kappa_{s^{2p}}^{2}}\nonumber\\
&+c\oo{p}\intcurve{\norm{P_{4}^{4p,2p}\oo{\kappa-\bar{\kappa}}}}+c\oo{p}L^{-1}\intcurve{\norm{P_{3}^{4p,2p}\oo{\kappa-\bar{\kappa}}}}.\label{LongTimeProp1,1}
\end{align}
Here $P_{i}^{j,k}\oo{\cdot}$ stands for a polynomial in $\phi$ of the form
\[
P_{i}^{j,k}\oo{\phi}=\sum_{\mu_{1}+\dots+\mu_{i}=j,\mu_{l}\leq k}\partial_{s}^{\mu_{1}}\phi\star\partial_{s}^{\mu_{2}}\phi\star\cdots\star\partial_{s}^{\mu_{i}}\phi.
\]
(See, for example \cite{DKS} for more details).
Using Lemma $\ref{AppendixLemma5}$, it follows that
\[
\intcurve{\norm{P_{4}^{4p,2p}\oo{\kappa-\bar{\kappa}}}}+L^{-1}\intcurve{\norm{P_{3}^{4p,2p}\oo{\kappa-\bar{\kappa}}}}\leq c\oo{p}\oo{K_{osc}+\sqrt{K_{osc}}}\intcurve{\kappa_{s^{2p+1}}^{2}},
\]
Hence inequality $\oo{\ref{LongTimeProp1,1}}$ can be rearranged to read
\begin{equation}
\frac{d}{dt}\intcurve{\kappa_{s^{p}}^{2}}+\oo{2-c\oo{p}\oo{K_{osc}+\sqrt{K_{osc}}}}\intcurve{\kappa_{s^{2p+1}}^{2}}\leq2\intcurve{\kappa^{2}\cdot\kappa_{s^{2p}}^{2}}.\label{LongTimeProp1,2}
\end{equation}
Next we expand the right hand side of $\ref{LongTimeProp1,2}$ and use the Cauchy-Schwarz inequality on the result:
\begin{align}
2\intcurve{\kappa^{2}\cdot\kappa_{s^{2p}}^{2}}&=2\intcurve{\cc{\oo{\kappa-\bar{\kappa}}^{2}+2\bar{\kappa}\oo{\kappa-\bar{\kappa}}+\bar{\kappa}^{2}}\kappa_{s^{2p}}^{2}}\nonumber\\
&\leq4\intcurve{\oo{\kappa-\bar{\kappa}}^{2}\kappa_{s^{2p}}^{2}}+4\bar{\kappa}^{2}\intcurve{\kappa_{s^{2p}}^{2}}.\label{LongTimeProp1,5}
\end{align}
The first term in $\oo{\ref{LongTimeProp1,5}}$ can be estimated easily, using
Lemma \ref{AppendixLemma2} with $f=\kappa_{s^{2p}}$:
\begin{align}
4\intcurve{\oo{\kappa-\bar{\kappa}}^{2}\kappa_{s^{2p}}^{2}}&\leq4\llll{\kappa_{s^{2p}}}_{\infty}^{2}\intcurve{\oo{\kappa-\bar{\kappa}}^{2}}\nonumber\\
&\leq 4\oo{\frac{L}{2\pi}\intcurve{\kappa_{s^{2p+1}}^{2}}}\intcurve{\oo{\kappa-\bar{\kappa}}^{2}}=\frac{2}{\pi}K_{osc}\intcurve{\kappa_{s^{2p+1}}^{2}}.\label{LongTimeProp1,6}
\end{align}
The second term is dealt with by using Lemma $\ref{AppendixLemma0}$
 with $m=2p$:
\begin{align}
4\bar{\kappa}^{2}\intcurve{\kappa_{s^{2p}}^{2}}&=16\pi^{2}L^{-2}\intcurve{\kappa_{s^{2p}}^{2}}\nonumber\\
&\leq16\pi^{2}L^{-2}\oo{\varepsilon L^{2}\intcurve{\kappa_{s^{2p+1}}^{2}}+\frac{1}{4\varepsilon^{2p}}L^{-\oo{4p+1}}K_{osc}}\nonumber\\
&=16\pi^{2}\varepsilon\intcurve{\kappa_{s^{2p+1}}^{2}}+\frac{16\pi^{2}}{4\varepsilon^{2p}}L^{-\oo{4p+3}}K_{osc}.\nonumber
\end{align}
Here, of course $\varepsilon>0$ can be made as small as desired. Letting $\varepsilon^{\star}=16\pi^{2}\varepsilon$ yields
\begin{equation}
4\bar{\kappa}^{2}\intcurve{\kappa_{s^{2p}}^{2}}\leq\varepsilon^{\star}\intcurve{\kappa_{s^{2p+1}}^{2}}+4\pi^{2}\oo{\frac{16\pi^{2}}{\varepsilon^{\star}}}^{2p}L^{-\oo{4p+3}}K_{osc}.\label{LongTimeProp1,7}
\end{equation}
Substituting $\oo{\ref{LongTimeProp1,6}}$ and $\oo{\ref{LongTimeProp1,7}}$ into $\oo{\ref{LongTimeProp1,2}}$ gives
\begin{align}
&\frac{d}{dt}\intcurve{\kappa_{s^{p}}^{2}}+\oo{2-\oo{c\oo{p}+\frac{2}{\pi}+\varepsilon^{\star}}K_{osc}-c\oo{p}\sqrt{K_{osc}}}\intcurve{\kappa_{s^{2p+1}}^{2}}\nonumber\\
&\leq4\pi^{2}\oo{\frac{16\pi^{2}}{\varepsilon^{\star}}}^{2p}L^{-\oo{4p+3}}K_{osc}\leq4\pi^{2}\oo{\frac{16\pi^{2}}{\varepsilon^{\star}}}^{2p}\oo{4\pi A\oo{\gamma_{0}}}^{-\oo{4p+3}/2}K_{osc}.\label{LongTimeProp1,8}
\end{align}
Here we have used the inequality $\oo{\ref{IsoperimetricConsequence}}$ in the last step. Hence Proposition $\ref{FlowProp2}$ tells us that choosing choosing $K_{osc}\oo{0}<\varepsilon_{0}$ for $\varepsilon_{0}>0$ sufficiently small yields the following inequality
\begin{equation}
\frac{d}{dt}\intcurve{\kappa_{s^{p}}^{2}}\leq c\oo{\gamma_{0}}K_{osc}\label{LongTimeProp1,9}
\end{equation}
for some constant $c\oo{\gamma_{0}}$ which only depends upon our initial immersion. This inequality is valid over $\left[0,T\right)$. Note that we have chosen $\varepsilon^{\star}$ to be sufficiently small so that the absorption process is valid in the last step.
Integrating $\oo{\ref{LongTimeProp1,9}}$ while using our $L^{1}$ bound for $K_{osc}$ from $\oo{\ref{OscillationOfCurvatureL1}}$ then yields for any $t\in\left[0,T\right)$ the following inequality:
\[
\intcurve{\kappa_{s^{p}}^{2}}\leq\intcurve{\kappa_{s^{p}}^{2}}\Big|_{t=0}+\frac{c\oo{\gamma_{0}}}{2\oo{p+1}\oo{2\pi}^{2p}}L^{2\oo{p+1}}\oo{\gamma_{0}}\leq \tilde{c}\oo{\gamma_{0}}
\]
for some new constant $\tilde{c}\oo{\gamma_{0}}$ that only depends on the initial immersion. This completes the proof.
\end{proof}

\begin{corollary}\label{LongTimeCorollary2}
Suppose $\gamma:\mathbb{S}^{1}\times\left[0,T\right)\rightarrow\mathbb{R}^{2}$
solves $\eqref{FlowDefinition}$. Then there exists a constant
$\varepsilon_{0}>0$ (with $\varepsilon_{0}\leq K^{\star}$) such that if
\[
K_{osc}\oo{0}<\varepsilon_{0}\text{  and  }I\oo{0}<e^{\frac{\varepsilon_{0}}{8\pi^{2}}}
\]
then $\gamma\oo{\mathbb{S}^{1}}$ approaches a round circle with radius $\sqrt{\frac{A\oo{\gamma_{0}}}{\pi}}$.
\end{corollary}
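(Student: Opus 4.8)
The plan is to assemble the ingredients already established. Since $\gamma_{0}$ satisfies the hypotheses of Corollary~\ref{LongTimeCorollary1}, the flow is global, $T=\infty$, and \eqref{OscillationOfCurvatureL1} then gives $K_{osc}\in L^{1}([0,\infty))$. As $L$ is nonincreasing with $\sqrt{4\pi A(\gamma_{0})}\le L(t)\le L(\gamma_{0})$, it has a limit $L_{\infty}\ge\sqrt{4\pi A(\gamma_{0})}$; the winding number is $\omega=1$ throughout (a larger $\omega$ would force $I(0)$ or $K_{osc}(0)$ above the allowed range), so $\bar\kappa=2\pi/L\to 2\pi/L_{\infty}$. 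What remains is to show that $\gamma(\cdot,t)$ converges, that its limit is a round circle, and to identify the radius.

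First I would record uniform-in-time bounds on all curvature derivatives. Because $\int_{\gamma}\kappa^{2}\,ds=(K_{osc}+4\pi^{2})/L$ and $K_{osc}\le 2K^{\star}$ by Proposition~\ref{FlowProp2}, the quantity $\int_{\gamma}\kappa^{2}\,ds$ is bounded uniformly in $t$; the Dziuk--Kuwert--Sch\"atzle-type bootstrap from the proof of Lemma~\ref{LongTimeLemma1} then yields $\|\kappa_{s^{m}}\|_{\infty}\le c_{m}(\gamma_{0})$, uniformly in $t$, for every $m$.

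Next I would show $K_{osc}(t)\to0$ and upgrade to smooth convergence of the curvature. For $\varepsilon_{0}$ small the coefficient $2-c_{1}K_{osc}-c_{2}\sqrt{K_{osc}}$ in Corollary~\ref{CurvatureCorollary1} is positive, so after discarding the two nonnegative terms there and using $\tfrac{d}{dt}\ln L=-\|\kappa_{s^{p}}\|_{2}^{2}/L$ one gets $\tfrac{d}{dt}K_{osc}\le 8\pi^{2}\|\kappa_{s^{p}}\|_{2}^{2}/L$, which is bounded above by Proposition~\ref{LongTimeProp1} and $L\ge\sqrt{4\pi A(\gamma_{0})}$. Since $K_{osc}\ge 0$ lies in $L^{1}$ and has derivative bounded above, $K_{osc}(t)\to0$, and hence $\int_{\gamma}(\kappa-\bar\kappa)^{2}\,ds=K_{osc}/L\to0$. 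The same argument applied to \eqref{LongTimeProp1,9}, together with $\int_{0}^{\infty}\|\kappa_{s^{p}}\|_{2}^{2}\,d\tau=L(\gamma_{0})-L_{\infty}<\infty$, gives $\|\kappa_{s^{p}}\|_{2}^{2}\to0$. Interpolating each fixed derivative between $\|\kappa-\bar\kappa\|_{2}\to0$ and the uniform bounds of the previous step yields $\|\kappa_{s^{j}}\|_{2}\to0$ for every $j\ge1$, hence $\|\kappa_{s^{j}}\|_{\infty}\to0$ by Sobolev embedding; with $\bar\kappa\to2\pi/L_{\infty}$ this says $\kappa\to2\pi/L_{\infty}$ in $C^{\infty}(\mathbb{S}^{1})$. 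A closed curve of constant curvature $2\pi/L_{\infty}>0$ is a round circle of radius $r:=L_{\infty}/(2\pi)$, so $\gamma(\mathbb{S}^{1})$ approaches such a circle; and since the enclosed area is constantly $A(\gamma_{0})$ by Corollary~\ref{PrelimCor1} and passes to the limit, $A(\gamma_{0})=\pi r^{2}$, i.e.\ $r=\sqrt{A(\gamma_{0})/\pi}$ (equivalently $L_{\infty}=\sqrt{4\pi A(\gamma_{0})}$).

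The step I expect to be the main obstacle is promoting ``$K_{osc}\to0$'' to an exponential decay \emph{rate}: this is what prevents the limiting circle from drifting and what produces the $C^{\infty}$-exponential refinement of Theorem~\ref{TM1}. The plan is to couple two differential inequalities. Using Corollary~\ref{CurvatureCorollary1} (keeping only the $\|\kappa_{s^{p+1}}\|_{2}^{2}$ term), the evolution of $\ln L$, and Lemma~\ref{AppendixLemma1} applied $(p+1)$ times, one gets $\tfrac{d}{dt}K_{osc}+c\,K_{osc}\le c'\|\kappa_{s^{p}}\|_{2}^{2}$; from \eqref{LongTimeProp1,8} with one further application of Lemma~\ref{AppendixLemma1} one gets $\tfrac{d}{dt}\|\kappa_{s^{p}}\|_{2}^{2}+c''\|\kappa_{s^{p}}\|_{2}^{2}\le c'''K_{osc}$, where all the constants are controlled through the isoperimetric bounds $\sqrt{4\pi A(\gamma_{0})}\le L\le L(\gamma_{0})$ and the smallness of $\varepsilon_{0}$. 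One then checks that this quasimonotone linear pair has exponentially stable comparison dynamics, so $K_{osc}(t)+\|\kappa_{s^{p}}\|_{2}^{2}(t)\le Ce^{-\lambda t}$; feeding this through the interpolation propagates exponential decay to every $\|\kappa_{s^{j}}\|_{\infty}$, whence $\int_{0}^{\infty}\|\partial_{t}\gamma\|_{\infty}\,dt=\int_{0}^{\infty}\|\kappa_{s^{2p}}\|_{\infty}\,dt<\infty$ and $\gamma(\cdot,t)$ converges in $C^{\infty}$ to a single round circle of radius $\sqrt{A(\gamma_{0})/\pi}$.
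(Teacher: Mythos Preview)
Your first two paragraphs reproduce the paper's argument: global existence via Corollary~\ref{LongTimeCorollary1}, the $L^{1}$ bound \eqref{OscillationOfCurvatureL1}, then Proposition~\ref{LongTimeProp1} to bound $\tfrac{d}{dt}K_{osc}$ (the paper bounds $\lvert\tfrac{d}{dt}K_{osc}\rvert$, you bound only from above; your one--sided version already suffices, since a nonnegative $L^{1}$ function with derivative bounded above must tend to zero), conclude $K_{osc}\to0$, and read off the radius from area conservation. You are in fact more careful than the paper about the existence of the limiting curve: the paper simply writes $\gamma_{\infty}:=\lim_{t\to\infty}\gamma_{t}$ and proceeds, while you first extract uniform $C^{\infty}$ bounds via the Dziuk--Kuwert--Sch\"atzle bootstrap and interpolate each $\lVert\kappa_{s^{j}}\rVert_{2}$ between $\lVert\kappa-\bar\kappa\rVert_{2}\to0$ and those bounds.

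Your third paragraph, on an exponential rate, goes beyond what Corollary~\ref{LongTimeCorollary2} asserts; that is the content of Corollary~\ref{LongTimeCorollary3}, and there the paper takes a different route. Rather than coupling $K_{osc}$ with $\lVert\kappa_{s^{p}}\rVert_{2}^{2}$, the paper absorbs the bad right--hand term $4\pi^{2}L^{-2}\lVert\kappa_{s^{m+p}}\rVert_{2}^{2}$ in \eqref{LongTimeCorollary3,8} by proving, via Breuning's compactness theorem, the sharpened Wirtinger--type inequality $\lVert\kappa_{s^{l}}\rVert_{2}^{2}\le c_{l}L^{2}K_{osc}\lVert\kappa_{s^{l+1}}\rVert_{2}^{2}$; once $K_{osc}$ is small this closes the evolution of each $\lVert\kappa_{s^{m}}\rVert_{2}^{2}$ on its own. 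Your coupled--ODE approach is plausible, but the assertion that the pair $X'+cX\le c'Y$, $Y'+c''Y\le c'''X$ has ``exponentially stable comparison dynamics'' is not automatic: it requires $cc''>c'c'''$, and since $c'''$ carries the factor $(16\pi^{2}/\varepsilon^{\star})^{2p}$ from \eqref{LongTimeProp1,8} with $\varepsilon^{\star}$ already forced small, that inequality is not evident. The paper's compactness trick (the extra factor of $K_{osc}$ in \eqref{LongTimeCorollary3,9}) is precisely what sidesteps this constant--chasing.
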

\begin{proof}
Recall from a previous discussion that to show $K_{osc}\searrow0$, it will be enough to show that $\norm{K_{osc}'}$ is bounded for all time. 

Firstly, by Corollary $\ref{CurvatureCorollary1}$ and Corollary $\ref{LongTimeCorollary1}$ we know that for $\epsilon_{0}>0$ sufficiently small $T=\infty$ and for all time we have the estimate
\[
\norm{\frac{d}{dt}K_{osc}}\leq\oo{\frac{8\pi^{2}-K_{osc}}{L}}\llll{\kappa_{s^{p}}}_{2}^{2}\leq\frac{8\pi^{2}}{\sqrt{4\pi A\oo{\gamma_{0}}}}\llll{\kappa_{s^{p}}}_{2}^{2}\leq\frac{8\pi^{2}}{\sqrt{4\pi A\oo{\gamma_{0}}}}\cdot\tilde{c}\oo{\gamma_{0}}<\infty.
\]
Here we have used also the results of Proposition $\ref{LongTimeProp1}$.
This immediately tells us that $K_{osc}\searrow0$ as $t\nearrow\infty$.
We will denote the limiting immersion by $\gamma_{\infty}$. That is,
\[
\gamma_{\infty}:=\lim_{t\to\infty}\gamma_{t}\oo{\mathbb{S}^{1}}=\lim_{t\to\infty}\gamma\oo{\cdot,t}.
\]
Our earlier equations imply that $K_{osc}\oo{\gamma_{\infty}}\equiv0$. Note that because the isoperimetric inequality forces $L\oo{\gamma_{\infty}}\geq\sqrt{4\pi A\oo{\gamma_{\infty}}}=\sqrt{4\pi A\oo{\gamma_{0}}}>0$, we can not have $L\searrow0$ and so we may conclude that
\begin{equation}
\int_{\gamma_{\infty}}{\oo{\kappa-\bar{\kappa}}^{2}\,ds}=0.\label{LongTimeCorollary2,1}
\end{equation}
It follow from $\oo{\ref{LongTimeCorollary2,1}}$ that $\kappa\oo{\gamma_{\infty}}\equiv C$ for some constant $C>0$ (note that we know $C$ must be positive because it is impossible for a closed curve with constant curvature to possess negative curvature). That is to say, $\gamma_{t}\oo{\mathbb{S}^{1}}$ approaches a round circle as $t\nearrow\infty$. The final statement of the Corollary regarding the radius of $\gamma_{\infty}$ (which we denote $r\oo{\gamma_{\infty}}$) then follows easily because the enclosed area $A\oo{\gamma_{t}}$ is static in time:
\[
r\oo{\gamma_{\infty}}=\sqrt{\frac{A\oo{\gamma_{\infty}}}{\pi}}=\sqrt{\frac{A\oo{\gamma_{0}}}{\pi}}.
\]
\end{proof}
Since the previous corollary tells us that $\gamma_{t}\oo{\mathbb{S}^{1}}\rightarrow\mathbb{S}_{\sqrt{\frac{A\oo{\gamma_{0}}}{\pi}}}^{1}$, we can conclude that for every $m\in\mathbb{N}$ there exists a sequence of times $\left\{t_{j}\right\}$ such that 
\[
\int{\kappa_{s^{m}}^{2}}\Big|_{t=t_{j}}\searrow0.
\]
Unfortunately, this is only subconvergence, and does not allow us to rule out the possibility of short sharp ``spikes'' (oscillations) in time. Indeed, even if we were to show that for every $m\in\mathbb{N}$ we have $\llll{\kappa_{s^{m}}}_{2}^{2}\in L^{1}\oo{\left[0,\infty\right)}$ (which is true), this would not be enough because these aforementioned ``spikes'' could occur on a time interval approaching that of (Lebesgue) measure zero. To overcome this dilemma, we attempt to control $\norm{\frac{d}{dt}\intcurve{\kappa_{s^{m}}^{2}}}$, and show that his quantity can be bounded by a multiple of $K_{osc}\oo{0}$ (which can be fixed to be as small as desired a priori). We will see this allows to strengthen the subconvergences result above to one of classical exponential convergence. 

\begin{corollary}[Exponential Convergence]\label{LongTimeCorollary3}
Suppose $\gamma:\mathbb{S}^{1}\times\left[0,T\right)\rightarrow\mathbb{R}^{2}$
solves $\eqref{FlowDefinition}$ and satisfies the assumptions of Corollary
$\ref{LongTimeCorollary2}$. Then for each $m\in\mathbb{N}$ there are constants
$c_{m},c_{m}^{\star}$ such that we have the estimates
\[
\intcurve{\kappa_{s^{m}}^{2}}\leq c_{m}e^{-c_{m}^{\star}t}\text{  and  }\llll{\kappa_{s^{m}}}_{\infty}\leq \sqrt{\frac{L\oo{\gamma_{0}}c_{m+1}}{2\pi}}e^{-\frac{c_{m+1}^{\star}}{2}t}.
\]
\end{corollary}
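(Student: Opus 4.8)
The plan is to bootstrap from the exponential decay of $K_{osc}$, which we already essentially have in hand, up through all higher derivatives of curvature by induction on $m$, using an $L^1$-in-time gradient estimate to upgrade subconvergence to genuine exponential decay. First I would establish the base case. We know from the proof of Corollary \ref{LongTimeCorollary2} that $\left|\frac{d}{dt}K_{osc}\right|$ is bounded by a constant depending only on $\gamma_0$, and that $K_{osc}\in L^1([0,\infty))$ by \eqref{OscillationOfCurvatureL1}. A function that is $L^1$ on $[0,\infty)$ with bounded derivative must tend to zero; combining this with the differential inequality from Corollary \ref{CurvatureCorollary1}, namely $\frac{d}{dt}\oo{K_{osc}+8\pi^2\ln L} + \frac{\llll{\kappa_{s^p}}_2^2}{L}K_{osc}+\dots\le 0$, together with the lower length bound $L\ge\sqrt{4\pi A(\gamma_0)}$ and the uniform bound $\llll{\kappa_{s^p}}_2^2\le\tilde c(\gamma_0)$ from Proposition \ref{LongTimeProp1}, I would derive a genuine exponential ODE. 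The key point is a Poincar\'e-type lower bound: since $\int_{\gamma}(\kappa-\bar\kappa)_{s^j}^2$ controls $K_{osc}$ from below via Lemma \ref{AppendixLemma1} run backwards appropriately --- or more directly, since $\llll{\kappa_{s^p}}_2^2 \ge c\, K_{osc}/L^{2p+1}$ would give $\frac{d}{dt}K_{osc}\le -c'K_{osc}$ once $\ln L$ is shown to have integrable derivative (which it does, as $\frac{d}{dt}\ln L = -\llll{\kappa_{s^p}}_2^2/L^2 \in L^1$). This yields $K_{osc}\le c_0 e^{-c_0^\star t}$, i.e. $\intcurve{\kappa_{s}^2}$ decays exponentially after noting $\intcurve{(\kappa-\bar\kappa)^2}\le \frac{L}{2\pi}\intcurve{\kappa_s^2}$ is not quite what I want --- rather I use $K_{osc}=L\intcurve{(\kappa-\bar\kappa)^2}$ and $\intcurve{\kappa_s^2}=\intcurve{(\kappa-\bar\kappa)_s^2}$, then interpolate.

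Next, the inductive step for $m\ge 1$. Assume $\intcurve{\kappa_{s^j}^2}\le c_j e^{-c_j^\star t}$ for all $j\le m$. Deriving the evolution equation for $\intcurve{\kappa_{s^m}^2}$ exactly as in Proposition \ref{LongTimeProp1} (with $p$ replaced by $m$ in the bookkeeping, using the commutator formula \eqref{PrelimCor1,2} repeatedly and Lemma \ref{PrelimLemma1}) produces
\[
\frac{d}{dt}\intcurve{\kappa_{s^m}^2}+2\intcurve{\kappa_{s^{m+1}}^2}\le c(m,p)\intcurve{|P|},
\]
where $P$ schematically collects terms $\kappa^a\star\partial_s^{\mu_1}(\kappa-\bar\kappa)\star\cdots$ plus $\bar\kappa^2$-terms. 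The curvature-dependent terms are absorbed using the DKS-style interpolation of Lemma \ref{AppendixLemma5} against $\intcurve{\kappa_{s^{m+1}}^2}$ at the cost of a factor $c(K_{osc}+\sqrt{K_{osc}})$, which is small by Proposition \ref{FlowProp2}; the $\bar\kappa^2\intcurve{\kappa_{s^m}^2}$ term, being $\frac{4\pi^2}{L^2}\intcurve{\kappa_{s^m}^2}$, is handled by a Poincar\'e--Wirtinger inequality applied to $\kappa_{s^m}$ (which has zero mean) to get $\intcurve{\kappa_{s^m}^2}\le\frac{L^2}{4\pi^2}\intcurve{\kappa_{s^{m+1}}^2}$ --- but this goes the wrong way for absorption, so instead I split: $\frac{4\pi^2}{L^2}\intcurve{\kappa_{s^m}^2}\le \varepsilon\intcurve{\kappa_{s^{m+1}}^2}+C_\varepsilon\intcurve{\kappa_{s^{m-1}}^2}$ via Lemma \ref{AppendixLemma0}, and the last factor decays exponentially by the inductive hypothesis. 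This gives, after absorption, $\frac{d}{dt}\intcurve{\kappa_{s^m}^2}\le -c\intcurve{\kappa_{s^{m+1}}^2} + C e^{-\lambda t}$ for a known rate $\lambda = c_{m-1}^\star$; applying again the Poincar\'e bound $\intcurve{\kappa_{s^{m+1}}^2}\ge\frac{4\pi^2}{L^2}\intcurve{\kappa_{s^m}^2}\ge \frac{4\pi^2}{(L(\gamma_0)\sqrt{I(\gamma_0)})^2}\,\cdot$ (wait --- $L$ is decreasing, bounded below, so $\frac{1}{L^2}$ is bounded below by $\frac{1}{L(\gamma_0)^2}$) yields $\frac{d}{dt}\intcurve{\kappa_{s^m}^2}\le -\delta\intcurve{\kappa_{s^m}^2}+Ce^{-\lambda t}$, and a standard ODE comparison (Gr\"onwall) closes the induction with $\intcurve{\kappa_{s^m}^2}\le c_m e^{-c_m^\star t}$ where $c_m^\star = \min(\delta,\lambda)$ (or $\delta-\epsilon$ if $\delta=\lambda$).

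Finally, the $L^\infty$ bound follows immediately from the first estimate: applying Lemma \ref{AppendixLemma2} (the Agmon/Gagliardo--Nirenberg interpolation $\vn{f}_\infty^2\le\frac{L}{2\pi}\intcurve{f_s^2}$ for periodic mean-zero $f$) with $f=\kappa_{s^m}$ gives
\[
\llll{\kappa_{s^m}}_\infty^2\le\frac{L}{2\pi}\intcurve{\kappa_{s^{m+1}}^2}\le\frac{L(\gamma_0)}{2\pi}c_{m+1}e^{-c_{m+1}^\star t},
\]
using the length bound $L\le L(\gamma_0)$; taking square roots gives exactly the stated estimate. I expect the main obstacle to be the bookkeeping in the inductive step: ensuring the interpolation constants genuinely only involve $K_{osc}$ (uniformly small by Proposition \ref{FlowProp2}) and lower-order quantities with established exponential decay, and checking that the Poincar\'e-type lower bound on $\intcurve{\kappa_{s^{m+1}}^2}$ in terms of $\intcurve{\kappa_{s^m}^2}$ is compatible --- since the length $L$ is only known to lie in a bounded interval $[\sqrt{4\pi A(\gamma_0)},L(\gamma_0)]$ this works, but one must be careful that the absorption constant in \eqref{LongTimeProp1,8}-style estimates does not degenerate. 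The subtle point, already flagged in the discussion preceding the corollary, is that bare $L^1$-in-time control of $\llll{\kappa_{s^m}}_2^2$ is insufficient to rule out high-frequency spikes; it is precisely the uniform bound on the time-derivative $\left|\frac{d}{dt}\intcurve{\kappa_{s^m}^2}\right|$, obtained as a byproduct of the evolution equation above once all right-hand-side terms are controlled, that upgrades subconvergence to honest exponential decay.
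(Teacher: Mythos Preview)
Your inductive strategy is different from the paper's, and unfortunately the base case does not close. The heart of the matter is that when you differentiate $\int_\gamma \kappa_{s^m}^2\,ds$ under \eqref{FlowDefinition}, the good term is $-2\int_\gamma \kappa_{s^{m+p+1}}^2\,ds$ (not $-2\int_\gamma \kappa_{s^{m+1}}^2\,ds$ as you wrote), and the residual ``bad'' $\bar\kappa^2$-term is $2\bar\kappa^2\int_\gamma \kappa_{s^{m+p}}^2\,ds = \dfrac{8\pi^2}{L^2}\int_\gamma \kappa_{s^{m+p}}^2\,ds$. Wirtinger (Lemma \ref{AppendixLemma1}) gives $\dfrac{8\pi^2}{L^2}\int_\gamma \kappa_{s^{m+p}}^2\,ds \le 2\int_\gamma \kappa_{s^{m+p+1}}^2\,ds$ with \emph{equality} on the first Fourier mode, so the absorption is exactly borderline: you are left with coefficient $2-2=0$ in front of the highest-order term and no room to extract decay. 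The same obstruction appears when you try to show $K_{osc}$ decays exponentially directly from Corollary \ref{CurvatureCorollary1}: unpacking $\frac{d}{dt}(8\pi^2\ln L)=-\frac{8\pi^2}{L}\llll{\kappa_{s^p}}_2^2$ produces a positive term that Wirtinger bounds by $2L\llll{\kappa_{s^{p+1}}}_2^2$, again matching the good term on the nose. Your claimed lower bound $\llll{\kappa_{s^p}}_2^2\ge cK_{osc}/L^{2p+1}$ goes the wrong way here (it makes the bad term \emph{larger}), and your interpolation via Lemma \ref{AppendixLemma0} leaves a remainder $C_\varepsilon K_{osc}$ that you cannot absorb without already knowing $K_{osc}$ decays exponentially --- which is precisely the base case you are trying to prove.

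The paper breaks this circularity not by induction but by proving, via a compactness/contradiction argument invoking Breuning's theorem (Theorem \ref{AppendixTheorem1}), the sharpened Wirtinger-type inequality
\[
\int_\gamma \kappa_{s^{l}}^2\,ds \;\le\; c_l\,L^2\,K_{osc}\,\int_\gamma \kappa_{s^{l+1}}^2\,ds
\]
for each fixed $l$ (equation \eqref{LongTimeCorollary3,9}). The extra $K_{osc}$ factor on the right is what makes absorption strict: since $K_{osc}\to 0$ by Corollary \ref{LongTimeCorollary2}, after some waiting time $t_m$ the bad term is genuinely smaller than the good one, and you obtain $\frac{d}{dt}\int_\gamma\kappa_{s^m}^2\,ds \le -\int_\gamma\kappa_{s^{m+p+1}}^2\,ds$ directly, for every $m$, with no inductive hypothesis required. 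Also note that ``bounded time-derivative plus $L^1$-in-time'' only yields convergence to zero, not exponential decay; the exponential rate comes from the clean differential inequality, not from the derivative bound. Your argument for the $L^\infty$ estimate via Lemma \ref{AppendixLemma2} is correct and matches the paper.
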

\begin{proof}
We first derive the evolution equation for $\intcurve{\kappa_{s^{m}}^{2}},m\in\mathbb{N}$ in a similar manner to Proposition $\ref{LongTimeProp1}$:
\begin{align}
\frac{d}{dt}\intcurve{\kappa_{s^{m}}^{2}}&=-2\intcurve{\kappa_{s^{m+p+1}}^{2}}+2\oo{-1}^{p}\sum_{j=1}^{m}\oo{-1}^{j}\intcurve{\kappa\cdot\kappa_{s^{m-j}}\cdot\kappa_{s^{m+j}}\cdot\kappa_{s^{2p}}}\nonumber\\
&+\oo{-1}^{p}\intcurve{\kappa\cdot\kappa_{s^{m}}^{2}\cdot\kappa_{s^{2p}}}.\label{LongTimeCorollary3,1}
\end{align}
We need to be careful in dealing with the extraneous terms in $\oo{\ref{LongTimeCorollary3,1}}$. We wish to apply Lemma $\ref{AppendixLemma5}$
  but to do so must consider the cases $p\geq m$ and $p\leq m$ separately.
	If $p\geq m$, with $p=m+l,l\in\mathbb{N}_{0}$, then we can perform integration by parts on each term in $\oo{\ref{LongTimeCorollary3,1}}$ $l$ times:
\begin{align}
&2\oo{-1}^{p}\sum_{j=1}^{m}\oo{-1}^{j}\intcurve{\kappa\cdot\kappa_{s^{m-j}}\cdot\kappa_{s^{m+j}}\cdot\kappa_{s^{2p}}}+\oo{-1}^{p}\intcurve{\kappa\cdot\kappa_{s^{m}}^{2}\cdot\kappa_{s^{2p}}}\nonumber\\
&=2\oo{-1}^{p}\sum_{j=1}^{m-1}\oo{-1}^{j}\intcurve{\oo{\kappa-\bar{\kappa}+\bar{\kappa}}\oo{\kappa-\bar{\kappa}}_{s^{m-j}}\oo{\kappa-\bar{\kappa}}_{s^{m+j}}\oo{\kappa-\bar{\kappa}}_{s^{m+p+l}}}\nonumber\\
&+2\oo{-1}^{m+p}\intcurve{\cc{\oo{\kappa-\bar{\kappa}}^{2}+2\bar{\kappa}\oo{\kappa-\bar{\kappa}}+\bar{\kappa}^{2}}\oo{\kappa-\bar{\kappa}}_{s^{2m}}\oo{\kappa-\bar{\kappa}}_{s^{m+p+l}}}\nonumber\\
&+\oo{-1}^{p}\intcurve{\oo{\kappa-\bar{\kappa}+\bar{\kappa}}\oo{\kappa-\bar{\kappa}}_{s^{m}}^{2}\oo{\kappa-\bar{\kappa}}_{s^{m+p+l}}}\nonumber\\
&=2\oo{-1}^{p}\sum_{j=1}^{m-1}\oo{-1}^{j+l}\intcurve{\partial_{s}^{l}\cc{\oo{\kappa-\bar{\kappa}+\bar{\kappa}}\oo{\kappa-\bar{\kappa}}_{s^{m-j}}\oo{\kappa-\bar{\kappa}}_{s^{m+j}}}\oo{\kappa-\bar{\kappa}}_{s^{p+p}}}\nonumber\\
&+2\intcurve{\partial_{s}^{l}\cc{\cc{\oo{\kappa-\bar{\kappa}}^{2}+2\bar{\kappa}\oo{\kappa-\bar{\kappa}}+\bar{\kappa}^{2}}\oo{\kappa-\bar{\kappa}}_{s^{2m}}}\oo{\kappa-\bar{\kappa}}_{s^{m+p}}}\nonumber\\
&+\oo{-1}^{p+l}\intcurve{\partial_{s}^{l}\cc{\oo{\kappa-\bar{\kappa}+\bar{\kappa}}\oo{\kappa-\bar{\kappa}}_{s^{m}}^{2}}\oo{\kappa-\bar{\kappa}}_{s^{m+p}}}\nonumber\\
&\leq c\oo{m,p}\intcurve{\norm{P_{4}^{2\oo{m+p},m+p}\oo{\kappa-\bar{\kappa}}}}+c\cdot L^{-1}\intcurve{\norm{P_{3}^{2\oo{m+p},m+p}\oo{\kappa-\bar{\kappa}}}}\nonumber\\
&+2\bar{\kappa}^{2}\intcurve{\kappa_{s^{2m+l}}\cdot\kappa_{s^{m+p}}}\nonumber\\
&\leq4\pi^{2}L^{-2}\intcurve{\kappa_{s^{m+p}}^{2}}+c\oo{m,p}\oo{K_{osc}+\sqrt{K_{osc}}}\intcurve{\kappa_{s^{m+p+1}}^{2}}.\label{LongTimeCorollary3,2}
\end{align}
Here we have used the energy inequality Lemma \ref{AppendixLemma5} on the $P$-style terms, as well as Lemma $\ref{CurvatureLemma1}$ which tells us that $\bar{\kappa}=\frac{2\pi}{L}$.

If $p<m$ (say with $m=p+l,l\in\mathbb{N}$), then we must proceed slightly differently. In this case, identity $\oo{\ref{LongTimeCorollary3,1}}$ becomes
\begin{align}
&2\oo{-1}^{p}\sum_{j=1}^{m}\oo{-1}^{j}\intcurve{\kappa\cdot\kappa_{s^{m-j}}\cdot\kappa_{s^{m+j}}\cdot\kappa_{s^{2p}}}+\oo{-1}^{p}\intcurve{\kappa\cdot\kappa_{s^{m}}^{2}\cdot\kappa_{s^{2p}}}\nonumber\\
&=2\oo{-1}^{p}\sum_{j=1}^{p+l-1}\oo{-1}^{j}\intcurve{\oo{\kappa-\bar{\kappa}+\bar{\kappa}}\oo{\kappa-\bar{\kappa}}_{s^{p+l-j}}\oo{\kappa-\bar{\kappa}}_{s^{p+l+j}}\oo{\kappa-\bar{\kappa}}_{s^{2p}}}\nonumber\\
&+2\oo{-1}^{m+p}\intcurve{\cc{\oo{\kappa-\bar{\kappa}}^{2}+2\bar{\kappa}\oo{\kappa-\bar{\kappa}}+\bar{\kappa^{2}}}\oo{\kappa-\bar{\kappa}}_{s^{m+p+l}}\oo{\kappa-\bar{\kappa}}_{s^{2p}}}\nonumber\\
&+\intcurve{\oo{\kappa-\bar{\kappa}+\bar{\kappa}}\oo{\kappa-\bar{\kappa}}_{s^{p+l}}^{2}\oo{\kappa-\bar{\kappa}}_{s^{2p}}}\nonumber\\
&=2\oo{-1}^{p}\sum_{j=1}^{p}\oo{-1}^{j}\intcurve{\oo{\kappa-\bar{\kappa}+\bar{\kappa}}\oo{\kappa-\bar{\kappa}}_{s^{p+l-j}}\oo{\kappa-\bar{\kappa}}_{s^{p+l+j}}\oo{\kappa-\bar{\kappa}}_{s^{2p}}}\nonumber\\
&+2\oo{-1}^{p}\sum_{j=p+1}^{p+l-1}\oo{-1}^{j}\intcurve{\oo{\kappa-\bar{\kappa}+\bar{\kappa}}\oo{\kappa-\bar{\kappa}}_{s^{p+l-j}}\oo{\kappa-\bar{\kappa}}_{s^{p+l+j}}\oo{\kappa-\bar{\kappa}}_{s^{2p}}}\nonumber\\
&+2\oo{-1}^{m+p}\intcurve{\cc{\oo{\kappa-\bar{\kappa}}^{2}+2\bar{\kappa}\oo{\kappa-\bar{\kappa}}+\bar{\kappa^{2}}}\oo{\kappa-\bar{\kappa}}_{s^{m+p+l}}\oo{\kappa-\bar{\kappa}}_{s^{2p}}}\nonumber\\
&+\intcurve{\oo{\kappa-\bar{\kappa}+\bar{\kappa}}\oo{\kappa-\bar{\kappa}}_{s^{p+l}}^{2}\oo{\kappa-\bar{\kappa}}_{s^{2p}}}\nonumber\\
&\leq 2\oo{-1}^{m+p}\oo{\frac{2\pi}{L}}^{2}\intcurve{\kappa_{s^{m+p+l}}\cdot\kappa_{s^{2p}}}+c\oo{m,p}\intcurve{\norm{P_{4}^{2\oo{m+p},m+p}\oo{\kappa-\bar{\kappa}}}}\nonumber\\
&+c\oo{m,p}L^{-1}\intcurve{\norm{P_{3}^{2\oo{m+p},m+p}\oo{\kappa-\bar{\kappa}}}}+c\oo{m,p}\intcurve{\norm{P_{4}^{2\oo{m+p},M}\oo{\kappa-\bar{\kappa}}}}\nonumber\\
&+c\oo{m,p}L^{-1}\intcurve{\norm{P_{3}^{2\oo{m+p},M}\oo{\kappa-\bar{\kappa}}}}\nonumber\\
&+2\oo{-1}^{p}\sum_{j=p+1}^{p+l-1}\oo{-1}^{j}\intcurve{\oo{\kappa-\bar{\kappa}+\bar{\kappa}}\oo{\kappa-\bar{\kappa}}_{s^{p+l-j}}\oo{\kappa-\bar{\kappa}}_{s^{p+l+j}}\oo{\kappa-\bar{\kappa}}_{s^{2p}}}\label{LongTimeCorollary3,3}
\end{align}
Here $M:=\max\left\{m,2p\right\}<m+p$.
The second and third terms of $\oo{\ref{LongTimeCorollary3,3}}$ are identical to those in our calculation of $\oo{\ref{LongTimeCorollary3,2}}$, in which we established the identity
\begin{align}
&\intcurve{\norm{P_{4}^{2\oo{m+p},m+p}\oo{\kappa-\bar{\kappa}}}}+L^{-1}\intcurve{\norm{P_{3}^{2\oo{m+p},m+p}\oo{\kappa-\bar{\kappa}}}}\nonumber\\
&\leq c\oo{m,p}\oo{K_{osc}+\sqrt{K_{osc}}}\intcurve{\kappa_{s^{m+p+1}}^{2}}.\label{LongTimeCorollary3,4}
\end{align} 
The fourth and fifth terms in $\oo{\ref{LongTimeCorollary3,3}}$ are estimated in a similar way. Because $M<m+p$, we are free to utilise Lemma $\ref{AppendixLemma5}$ wth $K=m+p+1$ and then the terms are estimatable in the same way as the $P$-style terms in $\oo{\ref{LongTimeCorollary3,2}}$. We conclude that
\begin{align}
&\intcurve{\norm{P_{4}^{2\oo{m+p},M}\oo{\kappa-\bar{\kappa}}}}+L^{-1}\intcurve{\norm{P_{3}^{2\oo{m+p},M}\oo{\kappa-\bar{\kappa}}}}\nonumber\\
&\leq c\oo{m,p}\oo{K_{osc}+\sqrt{K_{osc}}}\intcurve{\kappa_{s^{m+p+1}}^{2}}.\label{LongTimeCorollary3,5}
\end{align}
Finally, the last part of $\oo{\ref{LongTimeCorollary3,3}}$ involving the summation can be estimated by applying integrating by parts by parts $j-p$ times to each term in $j$ and then estimating in the same way as above:
\begin{align}
&2\oo{-1}^{p}\sum_{j=p+1}^{p+l-1}\oo{-1}^{j}\intcurve{\oo{\kappa-\bar{\kappa}+\bar{\kappa}}\oo{\kappa-\bar{\kappa}}_{s^{p+l-j}}\oo{\kappa-\bar{\kappa}}_{s^{p+l+j}}\oo{\kappa-\bar{\kappa}}_{s^{2p}}}\nonumber\\
&2\oo{-1}^{p}\sum_{j=p+1}^{p+l-1}\oo{-1}^{2j-p}\intcurve{\partial_{s}^{j-p}\cc{\oo{\kappa-\bar{\kappa}+\bar{\kappa}}\oo{\kappa-\bar{\kappa}}_{s^{p+l-j}}\oo{\kappa-\bar{\kappa}}_{s^{2p}}}\oo{\kappa_{s^{2p+l}}}}\nonumber\\
&\leq c\oo{m,p}\intcurve{\norm{P_{4}^{2\oo{m+p},m+p}\oo{\kappa-\bar{\kappa}}}}+c\oo{m,p}L^{-1}\intcurve{\norm{P_{3}^{2\oo{m+p},m+p}}\oo{\kappa-\bar{\kappa}}}\nonumber\\
&\leq c\oo{m,p}\oo{K_{osc}+\sqrt{K_{osc}}}\intcurve{\kappa_{s^{m+p+1}}^{2}}.\label{LongTimeCorollary3,6}
\end{align}
Combining $\oo{\ref{LongTimeCorollary3,4}}$,$\oo{\ref{LongTimeCorollary3,5}}$ and $\oo{\ref{LongTimeCorollary3,6}}$ and substituting into $\oo{\ref{LongTimeCorollary3,3}}$ then gives
\begin{align}
&2\oo{-1}^{p}\sum_{j=1}^{m}\oo{-1}^{j}\intcurve{\kappa\cdot\kappa_{s^{m-j}}\cdot\kappa_{s^{m+j}}\cdot\kappa_{s^{2p}}}+\oo{-1}^{p}\intcurve{\kappa\cdot\kappa_{s^{m}}^{2}\cdot\kappa_{s^{2p}}}\nonumber\\
&\leq 4\pi^{2}L^{-2}\intcurve{\kappa_{s^{m+p}}^{2}}+c\oo{m,p}\oo{K_{osc}+\sqrt{K_{osc}}}\intcurve{\kappa_{s^{m+p+1}}^{2}}.\label{LongTimeCorollary3,7}
\end{align}
We can clearly see from $\oo{\ref{LongTimeCorollary3,2}}$ and $\oo{\ref{LongTimeCorollary3,7}}$ that the estimates of the extraneous terms in $\oo{\ref{LongTimeCorollary3,1}}$ are of the same form, regardless of the sign of $p-m$. We can conclude that
\begin{equation}
\frac{d}{dt}\intcurve{\kappa_{s^{m}}^{2}}+\oo{2-c\oo{m,p}\oo{K_{osc}+\sqrt{K_{osc}}}}\intcurve{\kappa_{s^{m+p+1}}^{2}}\leq 4\pi^{2}L^{-2}\intcurve{\kappa_{s^{m+p}}^{2}}.\label{LongTimeCorollary3,8}
\end{equation}

Let us step back for a moment and forget about our time parameter, assuming without loss of generality that we are looking at a fixed time slice.

We claim that for any smooth closed curve $\gamma$ and any $l\in\mathbb{N}$, there exists a universal, bounded constant $c_{l}>0$ such that
\begin{equation}
\intcurve{\kappa_{s^{l}}^{2}}\leq c_{l}L^{2}K_{osc}\intcurve{\kappa_{s^{l+1}}^{2}}.\label{LongTimeCorollary3,9}
\end{equation}
Let us assume for the sake of contradiction that we can not find a suitable constant $c_{l}<\infty$ such that inequality $\oo{\ref{LongTimeCorollary3,9}}$ holds. Then, there exists a sequence of immersions $\left\{\gamma_{j}\right\}$ such that 
\begin{equation}
R_{j}:=\frac{\llll{\kappa_{s^{l}}}_{2,\gamma_{j}}^{2}}{L^{2}\oo{\gamma_{j}}K_{osc}\oo{\gamma_{j}}\llll{\kappa_{s^{l+1}}}_{2,\gamma_{j}}^{2}}\nearrow\infty\text{ as  }j\rightarrow\infty.\label{LongTimeCorollary3,10}
\end{equation}
Now, Theorem $\ref{AppendixTheorem1}$ implies that for any $j\in\mathbb{N}$ we have
\[
R_{j}\leq \frac{\frac{L^{2}\oo{\gamma_{j}}}{4\pi^{2}}\llll{\kappa_{s^{l+1}}}_{2,\gamma_{j}}^{2}}{L^{2}\oo{\gamma_{j}}K_{osc}\oo{\gamma_{j}}\llll{\kappa_{s^{l+1}}}_{2,\gamma_{j}}^{2}}=\frac{1}{4\pi^{2}K_{osc}\oo{\gamma_{j}}},
\]
and so the only way that $\oo{\ref{LongTimeCorollary3,10}}$ can occur is if we have
\begin{equation}
K_{osc}\oo{\gamma_{j}}\searrow0\text{  as  }j\rightarrow\infty.\label{LongTimeCorollary3,11}
\end{equation}
Then, as each $\gamma_{j}$ satisfies the criteria of Theorem $\ref{AppendixTheorem1}$, we conclude there is a subsequence of immersions $\left\{\gamma_{j_{k}}\right\}$ and an immersion $\gamma_{\infty}$ such that $\gamma_{j_{k}}\rightarrow\gamma_{\infty}$ in the $C^{1}$-topology. Moreover, by $\oo{\ref{LongTimeCorollary3,11}}$ we have $K_{osc}\oo{\gamma_{\infty}}=0$. But this implies that $\gamma_{\infty}$ must be a circle, in which case both sides of inequality $\oo{\ref{LongTimeCorollary3,9}}$ are zero. Hence the inequality holds trivially with the immersion $\gamma_{\infty}$ for \emph{any} $c_{l}$ we wish, and so we can not in fact have $R_{j}\nearrow\infty$. This contradicts $\oo{\ref{LongTimeCorollary3,10}}$, and so the assumption that we can not find a constant $c_{l}$ such that the inequality $\oo{\ref{LongTimeCorollary3,9}}$ holds, must be false. 

Next, combining $\oo{\ref{LongTimeCorollary3,9}}$ with $\oo{\ref{LongTimeCorollary3,8}}$ gives us
\[
\frac{d}{dt}\intcurve{\kappa_{s^{m}}^{2}}+\oo{2-c\oo{m,p}\oo{K_{osc}+\sqrt{K_{osc}}}-4\tilde{c}_{m}\pi^{2}K_{osc}}\intcurve{\kappa_{s^{m+p+1}}^{2}}\leq0.
\]
Here $\tilde{c}_{m}$ is our new interpolative constant, which we take to be the largest of all optimal constants in inequality  $\oo{\ref{LongTimeCorollary3,9}}$ for closed simple curves with length bounded by $L\oo{\gamma_{0}}$. Then, because $K_{osc}\rightarrow0$, we know that there exists a time, say $t_{m}$, such that for $t\geq t_{m}$, $c\oo{m,p}\oo{K_{osc}+\sqrt{K_{osc}}}+4\tilde{c}_{m}\pi^{2}K_{osc}\leq1$. Hence for $t\geq t_{m}$ the previous inequality implies that
\begin{equation}
\frac{d}{dt}\intcurve{\kappa_{s^{m}}^{2}}\leq-\intcurve{\kappa_{s^{m+p+1}}^{2}}.\label{LongTimeCorollary3,12}
\end{equation}

Next, applying inequality Lemma $\ref{AppendixLemma1}$ $p+1$ times and using the monotonicity of $L\oo{\gamma_{t}}$ gives
\[
\intcurve{\kappa_{s^{m}}^{2}}\leq\oo{\frac{L^{2}\oo{\gamma_{t}}}{4\pi^{2}}}^{p+1}\intcurve{\kappa_{s^{m+p+1}}^{2}}\leq\oo{\frac{L^{2}\oo{\gamma_{0}}}{4\pi^{2}}}^{p+1}\intcurve{\kappa_{s^{m+p+1}}^{2}}.
\]
Hence if we define $c_{m}^{\star}:=\oo{\frac{4\pi^{2}}{L^{2}\oo{\gamma_{0}}}}^{p+1}$ then we conclude from $\oo{\ref{LongTimeCorollary3,12}}$ that for any $t\geq t_{m}$ we have the estimate
\[
\frac{d\intcurve{\kappa_{s^{m}}^{2}}}{\intcurve{\kappa_{s^{m}}^{2}}}\leq-c_{m}^{\star}\,dt.
\]
Integrating over $\left[t_{m},t\right]$ and exponentiating yields
\[
\intcurve{\kappa_{s^{m}}^{2}}\leq\int_{\gamma_{t_{m}}}{\kappa_{s^{m}}^{2}\,ds}\cdot e^{-c_{m}^{\star}\oo{t-t_{m}}}=\oo{e^{c_{m}^{\star}t_{m}}\int_{\gamma_{t_{m}}}{\kappa_{s^{m}}^{2}\,ds}}\cdot e^{-c_{m}^{\star}t},
\]
which is the first statement of the corollary. For the second statement, we simply combine the first statement and Lemma $\ref{AppendixLemma2}$ with $f=\kappa_{s^{m}}$:
\[
\llll{\kappa_{s^{m}}}_{\infty}^{2}\leq \frac{L\oo{\gamma_{0}}}{2\pi}\intcurve{\kappa_{s^{m+1}}^{2}}\leq \frac{L\oo{\gamma_{0}}c_{m+1}}{2\pi}e^{-c_{m+1}^{\star}t}.
\]
The pointwise exponential convergence result follows immediately from taking the square root of both sides.
\end{proof}

Let us finish by proving Proposition \ref{PN1}.

\begin{proof}
We follow \cite{Wkosc}.
Rearranging $\gamma$ in time if necessary, we may assume that
\begin{align*}
k(\cdot,t) \not> 0 ,\qquad &\text{ for all }t\in[0,t_0)\\
k(\cdot,t) > 0,    \qquad &\text{ for all }t\in[t_0,\infty)
\end{align*}
where 
$t_0 > \frac{2}{p+1}\bigg[
                 \bigg( \frac{L(\gamma_0)}{2\pi}\bigg)^{2(p+1)}
               - \bigg( \frac{A(\gamma_0)}{\pi} \bigg)^{p+1}
                 \bigg]$,
otherwise we have nothing to prove.  However in this case we have
\begin{align*}
\frac{d}{dt}L
 &= -\vn{k_{s^p}}_2^2
 \le -\frac{4\pi^2}{L^2}\vn{k_{s^{p-1}}}_2^2
 \le \cdots
 \le -\bigg(\frac{4\pi^2}{L^2}\bigg)^{p-1}\vn{k_s}_2^2
\\
 &\le -\frac{\pi^2}{L^2}\bigg(\frac{4\pi^2}{L^2}\bigg)^{p-1}\vn{k}_2^2
\\
 &\le -\frac{4\pi^4}{L^3}\bigg(\frac{4\pi^2}{L^2}\bigg)^{p-1},&\text{for}\ t\in[0,t_0),
\intertext{where we used the fact that $\gamma$ is closed and that the curvature has a zero.  This implies}
L^{2p+2}(t) &\le -\frac{p+1}{2}(2\pi)^{2p+2}t + L^{2p+2}(\gamma_0),& \text{for}\ t\in[0,t_0),
\end{align*}
and thus $L^{2p+2}(t_0) < (4\pi A(\gamma_0))^{2p+2}$.
This is in contradiction with the isoperimetric inequality.
\end{proof}

\end{section}

\begin{section}{Appendix}
\begin{lemma}\label{AppendixLemma0}
Let $\gamma:\mathbb{S}^{1}\rightarrow\mathbb{R}^{2}$ be a smooth closed curve with Euclidean curvature $\kappa$ and arc length element $ds$. Then for any $m\in\mathbb{N}$ we have
\[
\intcurve{\kappa_{s^{m}}^{2}}\leq\varepsilon L^{2}\intcurve{\kappa_{s^{m+1}}^{2}}+\frac{1}{4\varepsilon^{m}}L^{-\oo{2m+1}}K_{osc},
\]
where $\varepsilon>0$ can be made as small as desired.
\end{lemma}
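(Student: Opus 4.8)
The plan is to reduce the inequality to a scale-invariant interpolation statement for $\tilde\kappa:=\kappa-\bar\kappa$ and then exploit the log-convexity of the $L^{2}$-norms of its arc-length derivatives. First I would note that, since $m\geq 1$, we have $\kappa_{s^{m}}=\tilde\kappa_{s^{m}}$ and $\kappa_{s^{m+1}}=\tilde\kappa_{s^{m+1}}$, while $K_{osc}=L\intcurve{\tilde\kappa^{2}}$ and $\intcurve{\tilde\kappa}=0$. Writing $x_{j}:=\big(\intcurve{\tilde\kappa_{s^{j}}^{2}}\big)^{1/2}$ for $j\geq 0$, the claimed bound becomes $x_{m}^{2}\leq \varepsilon L^{2}x_{m+1}^{2}+\tfrac14\,\varepsilon^{-m}L^{-2m}\,x_{0}^{2}$, since $x_{0}^{2}=L^{-1}K_{osc}$. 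If $\tilde\kappa\equiv 0$ both sides vanish, so from now on I assume $\tilde\kappa\not\equiv 0$; then, because any $\tilde\kappa_{s^{j}}$ vanishing identically would force $\tilde\kappa$ to be a periodic polynomial in $s$ and hence (being mean-zero) identically zero, every $x_{j}$ is strictly positive.

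The key step is the log-convexity estimate $x_{j}^{2}\leq x_{j-1}x_{j+1}$ for every $j\geq 1$. To prove it I would integrate $\intcurve{\tilde\kappa_{s^{j}}\,\tilde\kappa_{s^{j}}}$ by parts once; the boundary terms cancel by periodicity of $\gamma$, giving $\intcurve{\tilde\kappa_{s^{j}}^{2}}=-\intcurve{\tilde\kappa_{s^{j-1}}\,\tilde\kappa_{s^{j+1}}}$, and then Cauchy--Schwarz yields $x_{j}^{2}\leq x_{j-1}x_{j+1}$. Thus the finite sequence $(\log x_{j})_{0\leq j\leq m+1}$ is convex, and evaluating convexity at the interior index $m=\tfrac{1}{m+1}\cdot 0+\tfrac{m}{m+1}(m+1)$ gives
\[
x_{m}\;\leq\;x_{0}^{\frac{1}{m+1}}\,x_{m+1}^{\frac{m}{m+1}}.
\]

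To finish I would apply the weighted Young inequality $a^{\theta}b^{1-\theta}\leq \theta\delta a+(1-\theta)\delta^{-\theta/(1-\theta)}b$ (for $a,b\geq 0$, $\theta\in(0,1)$, $\delta>0$) with $\theta=\tfrac{m}{m+1}$, $a=x_{m+1}^{2}$, $b=x_{0}^{2}$, and the choice $\delta=\tfrac{m+1}{m}\varepsilon L^{2}$. This produces exactly $x_{m}^{2}\leq \varepsilon L^{2}x_{m+1}^{2}+\tfrac{m^{m}}{(m+1)^{m+1}}\varepsilon^{-m}L^{-2m}x_{0}^{2}$, and since $(1+\tfrac1m)^{m}\geq 2$ for every $m\geq 1$ we have $\tfrac{m^{m}}{(m+1)^{m+1}}=\big[(m+1)(1+\tfrac1m)^{m}\big]^{-1}\leq\tfrac14$, which is precisely the constant in the statement. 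Undoing the substitutions ($x_{m}^{2}=\intcurve{\kappa_{s^{m}}^{2}}$, $x_{0}^{2}=L^{-1}K_{osc}$) gives the lemma. There is no genuine obstacle here: the integration-by-parts and interpolation steps are of the same routine type as those in \cite{DKS}, and the only points requiring a little care are the disposal of the degenerate case $\tilde\kappa\equiv 0$ (which is what justifies taking logarithms) and the bookkeeping that makes the coefficient of the low-order term land at exactly $\tfrac14$ rather than at some merely finite constant. If a sharp $\tfrac14$ is not required, one may alternatively run a short induction on $m$ with the $m=1$ case handled directly by one integration by parts, the Poincar\'e inequality (Lemma \ref{AppendixLemma1}), and Young's inequality.
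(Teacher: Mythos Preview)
Your argument is correct. Both your proof and the paper's rest on the same one-line identity $\intcurve{\tilde\kappa_{s^{j}}^{2}}=-\intcurve{\tilde\kappa_{s^{j-1}}\tilde\kappa_{s^{j+1}}}\leq x_{j-1}x_{j+1}$, but you and the paper organise this differently. The paper proceeds by induction on $m$: it verifies $m=1$ directly, and for the step it writes $\intcurve{\kappa_{s^{j+1}}^{2}}\leq\tfrac{\varepsilon}{2}L^{2}\intcurve{\kappa_{s^{j+2}}^{2}}+\tfrac{1}{2\varepsilon}L^{-2}\intcurve{\kappa_{s^{j}}^{2}}$, substitutes the inductive hypothesis into the last term, and absorbs the resulting $\tfrac{1}{2}\intcurve{\kappa_{s^{j+1}}^{2}}$ to close. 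You instead iterate the Cauchy--Schwarz step to obtain log-convexity of $(x_{j})$, read off the single interpolation inequality $x_{m}\leq x_{0}^{1/(m+1)}x_{m+1}^{m/(m+1)}$, and finish with one weighted Young inequality. Your route is a little more direct and in fact yields the sharper coefficient $\tfrac{m^{m}}{(m+1)^{m+1}}\leq\tfrac14$ on the low-order term; the paper's induction is more hands-on but avoids having to dispose of the degenerate case $\tilde\kappa\equiv 0$ before taking logarithms. You even flag the inductive alternative in your last sentence, which is exactly the paper's method.
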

\begin{proof}
We will prove the lemma inductively. The case $m=1$ can be checked quite easily, by applying integration by parts and the Cauchy-Schwarz inequality:
\begin{align}
\intcurve{\kappa_{s}^{2}}&=\intcurve{\oo{\kappa-\bar{\kappa}}_{s}^{2}}=-\intcurve{\oo{\kappa-\bar{\kappa}}\oo{\kappa-\bar{\kappa}}_{s^{2}}}\nonumber\\
&\leq\oo{\intcurve{\oo{\kappa-\bar{\kappa}}^{2}}}^{\frac{1}{2}}\oo{\intcurve{\kappa_{s^{2}}^{2}}}^{\frac{1}{2}}\nonumber\\
&\leq\varepsilon L^{2}\intcurve{\kappa_{s^{2}}^{2}}+\frac{1}{4\varepsilon^{1}}L^{-2}\intcurve{\oo{\kappa-\bar{\kappa}}^{2}}.\nonumber
\end{align}

Next assume inductively that the statement is true for $j=m$. That is, assume that
\begin{equation}
\intcurve{\kappa_{s^{j}}^{2}}\leq\varepsilon L^{2}\intcurve{\kappa_{s^{j+1}}^{2}}+\frac{1}{4\varepsilon^{j}}L^{-\oo{2j+1}}K_{osc}\label{AppendixLemma0,1}
\end{equation}
where $\varepsilon>0$ can be made as small as desired.

Again performing integration by parts and the Cauchy-Schwarz inequality, we have for any $\varepsilon>0$:
\begin{align}
\intcurve{\kappa_{s^{j+1}}^{2}}&=-\intcurve{\kappa_{s^{j}}\cdot\kappa_{s^{j+2}}}\leq\oo{\intcurve{\kappa_{s^{j}}^{2}}}^{\frac{1}{2}}\oo{\intcurve{\kappa_{s^j+2}^{2}}}^{\frac{1}{2}}\nonumber\\
&\leq\frac{\varepsilon}{2} L^{2}\intcurve{\kappa_{s^{j+2}}^{2}}+\frac{1}{2\varepsilon}L^{-2}\intcurve{\kappa_{s^{j}}^{2}}.\label{AppendixLemma0,2}
\end{align}
Substituting the inductive assumption $\oo{\ref{AppendixLemma0,1}}$ into $\oo{\ref{AppendixLemma0,2}}$ then gives
\begin{align}
\intcurve{\kappa_{s^{j+1}}^{2}}&\leq\frac{\varepsilon}{2} L^{2}\intcurve{\kappa_{s^{j+2}}^{2}}+\frac{1}{2\varepsilon}L^{-2}\cc{\varepsilon L^{2}\intcurve{\kappa_{s^{j+1}}^{2}}+\frac{1}{4\varepsilon^{j}}\oo{\varepsilon}L^{-\oo{2j+1}}K_{osc}},\nonumber
\end{align}
meaning that
\[
\frac{1}{2}\intcurve{\kappa_{s^{j+1}}^{2}}\leq\frac{\varepsilon}{2} L^{2}\intcurve{\kappa_{s^{j+2}}^{2}}+\frac{1}{2}\cdot\frac{1}{4\varepsilon^{j+1}}L^{-\oo{2\oo{j+1}+1}}K_{osc}.
\]
Multiplying out by $2$ then gives us the inductive step, completing the lemma.
\end{proof}

\begin{lemma}\label{AppendixLemma1}
Let $f:\mathbb{R}\rightarrow\mathbb{R}$ be an absolutely continuous and periodic function of period $P$. Then, if $\int_{0}^{P}{f\,dx}=0$ we have
\[
\int_{0}^{P}{f^{2}\,dx}\leq\frac{P^{2}}{4\pi^{2}}\int_{0}^{P}{f_{x}^{2}\,dx},
\]
with equality if and only if
\[
f\oo{x}=A\cos\oo{\frac{2\pi}{P}x}+B\sin\oo{\frac{2\pi}{P}x}
\]
for some constants $A,B$.
\end{lemma}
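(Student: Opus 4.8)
The plan is to establish this by rescaling to period $2\pi$ and then invoking Fourier series. First I would set $g(y) := f\!\left(\frac{P}{2\pi}y\right)$, which is absolutely continuous, $2\pi$-periodic, and has $\int_0^{2\pi} g\,dy = 0$; a change of variables shows that the asserted inequality is equivalent to
\[
\int_0^{2\pi} g^2\,dy \le \int_0^{2\pi} g_y^2\,dy,
\]
and that the extremal functions correspond to one another under the rescaling. If $g_y \notin L^2$ the right-hand side is infinite and there is nothing to prove, so I may assume $g \in H^1(\mathbb{S}^1)$.

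Next I would expand $g$ in its real Fourier series; the vanishing mean removes the constant term, so $g(y) = \sum_{n\ge1}\bigl(a_n\cos ny + b_n\sin ny\bigr)$ with convergence in $L^2$. Since $g$ is absolutely continuous and periodic, integrating by parts against trigonometric polynomials identifies the Fourier coefficients of $g_y$ as exactly those obtained by term-by-term differentiation, so that by Parseval's identity
\[
\int_0^{2\pi} g^2\,dy = \pi\sum_{n\ge1}(a_n^2 + b_n^2), \qquad \int_0^{2\pi} g_y^2\,dy = \pi\sum_{n\ge1} n^2(a_n^2 + b_n^2).
\]

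The inequality then follows at once from $n^2 \ge 1$ for every $n\ge 1$, since
\[
\int_0^{2\pi} g_y^2\,dy - \int_0^{2\pi} g^2\,dy = \pi\sum_{n\ge1}(n^2-1)(a_n^2+b_n^2) \ge 0,
\]
and equality forces $a_n = b_n = 0$ for all $n \ge 2$, i.e. $g(y) = a_1\cos y + b_1\sin y$. Undoing the rescaling yields $f(x) = A\cos\!\left(\frac{2\pi}{P}x\right) + B\sin\!\left(\frac{2\pi}{P}x\right)$, and conversely such $f$ visibly saturate the bound.

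The only point requiring care is the step asserting that differentiation acts coefficient-wise on the Fourier expansion under the sole hypothesis of absolute continuity (rather than $C^1$); this is precisely where one uses the identity $\int_0^{2\pi} g\,\varphi' = -\int_0^{2\pi} g_y\,\varphi$ for all trigonometric polynomials $\varphi$, valid by absolute continuity and periodicity, which pins down the coefficients of $g_y$ and makes Parseval applicable. Everything else is routine. (An elementary alternative avoiding Fourier analysis — writing $f = u\sin(2\pi x/P) + v\cos(2\pi x/P)$ and completing squares after an integration by parts — is also available, but the Fourier argument gives the equality case most transparently.)
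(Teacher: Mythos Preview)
Your proof is correct and is the standard Fourier/Parseval argument for Wirtinger's inequality; the handling of the absolute-continuity hypothesis via integration by parts against trigonometric test functions is exactly the right justification for term-by-term differentiation of the Fourier coefficients.

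The paper, however, takes a genuinely different route: it treats the inequality as a constrained optimisation problem, writes down the Lagrangian $L=f^{2}+\lambda f_{x}^{2}$, derives the Euler--Lagrange equation $f_{xx}-\lambda^{-1}f=0$, and then uses the periodicity and zero-mean constraints to pin down the admissible frequencies $P/\sqrt{|\lambda|}=2n\pi$, from which the sharp constant and the extremals drop out. Your Fourier approach is shorter and arguably more rigorous, since it bypasses the question of \emph{existence} of an extremiser in the admissible class (a point the paper's variational argument takes for granted) and delivers the equality characterisation automatically from the vanishing of the $(n^{2}-1)$ terms. The variational approach, on the other hand, makes the structural reason for the extremals more visible and generalises readily to other Rayleigh-quotient problems where an explicit orthogonal basis is not available.
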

\begin{proof}
We will use the calculus of variations. Essentially, we wish to find $f$ that maximises the integral $\int_{0}^{P}{f^{2}\,dx}$, given a fixed value of $\int_{0}^{P}{f_{x}^{2}\,dx}$. We will show that combining this with the requirement that $\int_{0}^{P}{f\,dx}=0$ forces the extremal function to satisfy
\[
\frac{\int_{0}^{P}{f^{2}\,dx}}{\int_{0}^{P}{f_{x}^{2}\,dx}}\leq\frac{P^{2}}{4\pi^{2}}.
\] 
For the constrained problem, the associated Euler-Lagrange equation is
\[
L=f^{2}+\lambda f_{x}^{2},
\]
with extremal functions satisfying
\[
\frac{\partial L}{\partial f}-\frac{d}{dx}\oo{\frac{\partial F}{\partial f_{x}}}=2f-2\lambda f_{xx}=0.
\]
That is to say,
\begin{equation}
f_{xx}-\frac{1}{\lambda}f=0.\label{AppendixLemma1,1}
\end{equation}
This means that
\[
0\leq\int_{0}^{P}{f_{x}^{2}\,dx}=-\int_{0}^{P}{ff_{xx}\,dx}=-\frac{1}{\lambda}\int_{0}^{P}{f^{2}\,dx},
\]
which forces $\lambda<0$. By standard arguments, we conclude from $\oo{\ref{AppendixLemma1,1}}$ that our extremal function is
\begin{equation}
f\oo{x}=A\cos\oo{\frac{x}{\sqrt{\norm{\lambda}}}}+B\sin\oo{\frac{x}{\sqrt{\norm{\lambda}}}}.\label{AppendixLemma1,2}
\end{equation}
Here $A,B$ are constants. The periodicity of $f$ forces $f\oo{0}=f\oo{P}$, so
\begin{equation}
A=A\cos\oo{\frac{P}{\sqrt{\norm{\lambda}}}}+B\sin\oo{\frac{P}{\sqrt{\norm{\lambda}}}}.\label{AppendixLemma1,3}
\end{equation}
Also, the requirement that $\int_{0}^{P}{f\,dx}=0$ forces
\begin{equation}
A\sin\oo{\frac{P}{\sqrt{\norm{\lambda}}}}-B\cos\oo{\frac{P}{\sqrt{\norm{\lambda}}}}=-B.\label{AppendixLemma1,4}
\end{equation}
Combining $\oo{\ref{AppendixLemma1,3}}$ and $\oo{\ref{AppendixLemma1,4}}$,
\[
A^{2}=A^{2}\cos\oo{\frac{P}{\sqrt{\norm{\lambda}}}}+AB\sin\oo{\frac{P}{\sqrt{\norm{\lambda}}}}\text{  and  }B^{2}=B^{2}\cos\oo{\frac{P}{\sqrt{\norm{\lambda}}}}-AB\sin\oo{\frac{P}{\sqrt{\norm{\lambda}}}},
\]
meaning that
\[
A^{2}+B^{2}=\oo{A^{2}+B^{2}}\cos\oo{\frac{P}{\sqrt{\norm{\lambda}}}}.
\]
We conclude
\[
\frac{P}{\sqrt{\norm{\lambda}}}=2n\pi
\]
for some $n\in\mathbb{Z}\backslash\left\{0\right\}$ to be determined. 
Hence
\begin{equation}
f\oo{x}=A\cos\oo{\frac{2n\pi x}{P}}+B\sin\oo{\frac{2n\pi x}{P}}.\label{AppendixLemma1,5}
\end{equation}
A quick calculation yields
\[
\int_{0}^{P}{f^{2}\,dx}=\oo{\frac{A^{2}+B^{2}}{2}}P,\,\,\text{and}\,\,\int_{0}^{P}{f_{x}^{2}\,dx}=\oo{\frac{2n\pi}{P}}^{2}\oo{\frac{A^{2}+B^{2}}{2}}P.
\]

Hence for any of our extremal functions $f$,
\[
\frac{\int_{0}^{P}{f^{2}\,dx}}{\int_{0}^{P}{f_{x}^{2}\,dx}}=\oo{\frac{P}{2n\pi}}^{2}\leq\frac{P^{2}}{4\pi^{2}},
\]
with equality if and only if $n=1$. Thus our constrained function $f$ that maximises the ratio $\frac{\int_{0}^{P}{f^{2}\,dx}}{\int_{0}^{P}{f_{x}^{2}\,dx}}$ is given by
\[
f\oo{x}=A\cos\oo{\frac{2\pi}{P}x}+B\sin\oo{\frac{2\pi}{P}x},
\]
with 
\[
\int_{0}^{P}{f^{2}\,dx}\leq\frac{P^{2}}{4\pi^{2}}\int_{0}^{P}{f_{x}^{2}\,dx}
\]
amongst all continuous and $P-$periodic functions with $\int_{0}^{P}{f\,dx}=0$.
\end{proof}
\begin{lemma}\label{AppendixLemma2}
Let $f:\mathbb{R}\rightarrow\mathbb{R}$ be an absolutely continuous and periodic function of period $P$. Then, if $\int_{0}^{P}{f\,dx}=0$ we have
\[
\llll{f}_{\infty}^{2}\leq\frac{P}{2\pi}\int_{0}^{P}{f_{x}^{2}\,dx}.
\]
\end{lemma}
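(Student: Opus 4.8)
The plan is to reduce the statement to the Poincar\'e--Wirtinger inequality already established in Lemma \ref{AppendixLemma1} by a one-dimensional fundamental-theorem-of-calculus argument, taking care to use the periodicity \emph{twice} so as not to lose the sharp constant. (A Fourier-series proof via Parseval is also available, but the calculus argument is cleaner and invokes only the immediately preceding lemma.)

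\textbf{Setup.} I may assume $f\not\equiv 0$ and $\int_0^P f_x^2\,dx<\infty$, since otherwise there is nothing to prove. By continuity of the absolutely continuous periodic function $f$ on the compact period, pick $x_1$ with $\norm{f(x_1)}=\llll{f}_\infty=:M>0$; replacing $f$ by $-f$ if necessary, assume $f(x_1)=M$. Because $\int_0^P f\,dx=0$ and $f$ is continuous, $f$ attains a negative value; shifting by a multiple of $P$ that value is attained in $(x_1,x_1+P)$, so by the intermediate value theorem $f$ has a zero there (note $f(x_1+P)=f(x_1)=M\neq 0$). Let $a$ be the smallest zero of $f$ exceeding $x_1$ and $b$ the largest zero of $f$ less than $x_1+P$; by continuity $x_1<a\le b<x_1+P$.

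\textbf{Main estimate.} Since $f^2$ is absolutely continuous with $(f^2)_x=2ff_x$ a.e., the fundamental theorem of calculus gives $M^2=f(x_1)^2-f(a)^2=-2\int_{x_1}^{a}ff_x\,dx$ and $M^2=f(x_1+P)^2-f(b)^2=2\int_{b}^{x_1+P}ff_x\,dx$. Adding these, estimating by absolute values, and using that $(x_1,a)$ and $(b,x_1+P)$ are disjoint subintervals of the period $(x_1,x_1+P)$ together with $P$-periodicity of $\norm{f}\,\norm{f_x}$,
\[
2M^2\le 2\left(\int_{x_1}^{a}+\int_{b}^{x_1+P}\right)\norm{f}\,\norm{f_x}\,dx
\le 2\int_{x_1}^{x_1+P}\norm{f}\,\norm{f_x}\,dx
= 2\int_0^P \norm{f}\,\norm{f_x}\,dx.
\]
Now Cauchy--Schwarz on $[0,P]$ and then Lemma \ref{AppendixLemma1} (applicable since $\int_0^P f\,dx=0$) yield
\[
M^2\le \int_0^P \norm{f}\,\norm{f_x}\,dx
\le \left(\int_0^P f^2\,dx\right)^{1/2}\left(\int_0^P f_x^2\,dx\right)^{1/2}
\le \frac{P}{2\pi}\int_0^P f_x^2\,dx,
\]
which is the claim.

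\textbf{Where the subtlety lies.} The only point requiring attention is the use of a zero of $f$ on \emph{each} side of the maximum point within a single period: a crude one-sided version of the argument only produces the constant $P/\pi$, and it is precisely the periodicity which makes both one-sided integrals available while their union still fits inside one period, halving the constant. Everything else is routine bookkeeping.
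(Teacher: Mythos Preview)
Your proof is correct and follows essentially the same route as the paper: locate zeros of $f$ guaranteed by the mean-zero hypothesis, apply the fundamental theorem of calculus to $f^2$ from the evaluation point to a zero on each side, add the two estimates so that the integration ranges sit disjointly inside one period, then apply Cauchy--Schwarz followed by Lemma~\ref{AppendixLemma1}. The only cosmetic difference is that the paper fixes two zeros $p<q$ first and bounds $f(x)^2$ for arbitrary $x$, whereas you fix the maximum point first and then select zeros on either side of it; the arithmetic and the resulting constant are identical.
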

\begin{proof}
Since $\int_{0}^{P}{f\,dx}=0$ and $f$ is $P-$periodic we conclude that there exists distinct $0\leq p<q<P$ such that 
\[
f\oo{p}=f\oo{q}=0.
\]
Next, the fundamental theorem of calculus tells us that for any $x\in\oo{0,P}$,
\[
\frac{1}{2}\cc{f\oo{x}}^{2}=\int_{p}^{x}{ff_{x}\,dx}=\int_{q}^{x}{ff_{x}\,dx}.
\]
Hence
\begin{align*}
&\cc{f\oo{x}}^{2}=\int_{p}^{x}{ff_{x}\,dx}-\int_{qx}^{q}{ff_{x}\,dx}\leq\int_{p}^{q}{\norm{ff_{x}}\,dx}\leq\int_{0}^{P}{\norm{ff_{x}}\,dx}\\
&\leq\oo{\int_{0}^{P}{f^{2}\,dx}\cdot\int_{0}^{P}{f_{x}^{2}\,dx}}^{\frac{1}{2}}\leq\frac{P}{2\pi}\int_{0}^{P}{f_{x}^{2}\,dx},
\end{align*}
where the last step follows from Lemma $\ref{AppendixLemma1}$. We have also utilised H\"{o}lder's inequality with $p=q=2$.
\end{proof}

\begin{lemma}[\cite{DKS}, Lemma $2.4$]\label{AppendixLemma3}
Let $\gamma:\mathbb{S}^{1}\rightarrow\mathbb{R}^{2}$ be a smooth closed curve. Let $\phi:\mathbb{S}^{1}\rightarrow\mathbb{R}$ be a sufficiently smooth function. Then for any $l\geq 2,K\in\mathbb{N}$ and $0\leq i<K$ we have
\begin{equation}
L^{i+1-\frac{1}{l}}\oo{\intcurve{\oo{\phi}_{s^{i}}^{2}}}^{\frac{1}{l}}\leq c\oo{K}L^{\frac{1-\alpha}{2}}\oo{\intcurve{\phi^{2}}}^{\frac{1-\alpha}{2}}\llll{\phi}_{K,2}^{\alpha}.\label{AppendixLemma3,1}
\end{equation}
Here $\alpha=\frac{i+\frac{1}{2}-\frac{1}{l}}{K}$, and
\[
\llll{\phi}_{K,2}:=\sum_{j=0}^{K}L^{j+\frac{1}{2}}\oo{\intcurve{\oo{\phi}_{s^{j}}^{2}}}^{\frac{1}{2}}.
\]
In particular, if $\phi=\kappa-\bar{\phi}$, then
\begin{equation}
L^{i+1-\frac{1}{l}}\oo{\intcurve{\oo{k-\bar{k}}_{s^{i}}^{2}}}^{\frac{1}{l}}\leq c\oo{K}\oo{K_{osc}}^{\frac{1-\alpha}{2}}\llll{k-\bar{k}}_{K,2}^{\alpha}.\label{AppendixLemma3,2}
\end{equation}
\end{lemma}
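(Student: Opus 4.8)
The lemma is a Gagliardo--Nirenberg interpolation inequality on a closed curve (it is Lemma~$2.4$ of \cite{DKS}), and the plan is to strip off the scaling, reduce to the unit circle, and assemble the estimate there from two elementary ingredients already implicit in Lemmas~\ref{AppendixLemma0}--\ref{AppendixLemma2}. The first step is a scaling reduction: under $s\mapsto\lambda s$ one has $\partial_s\mapsto\lambda^{-1}\partial_s$ and $ds\mapsto\lambda\,ds$, and a term-by-term check shows that the left-hand side of \eqref{AppendixLemma3,1}, the factor $L^{(1-\alpha)/2}\oo{\intcurve{\phi^{2}}}^{(1-\alpha)/2}$, and $\llll{\phi}_{K,2}$ are each homogeneous of degree $1$ in $\lambda$; this is precisely what forces the displayed powers of $L$ together with the value $\alpha=(i+\tfrac12-\tfrac1l)/K$. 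It therefore suffices to prove the inequality for a curve of length $L=1$, i.e.\ for a smooth $1$-periodic function $\phi$ on $\R$, where it reads $\llll{\partial_s^{i}\phi}_{l}\le c(K)\llll{\phi}_{2}^{1-\alpha}\llll{\phi}_{K,2}^{\alpha}$; rescaling at the end restores the $L$ dependence.

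On the unit circle I would first trade the $L^{l}$ norm for $L^{2}$ norms. Since $l\ge 2$, H\"older gives $\llll{\partial_s^{i}\phi}_{l}\le\llll{\partial_s^{i}\phi}_{2}^{2/l}\llll{\partial_s^{i}\phi}_{\infty}^{1-2/l}$ (for $l=\infty$ one skips this step), and for the sup-norm factor I use the mean-zero Agmon estimate $\llll{\partial_s^{i}\phi}_{\infty}^{2}\le c\,\llll{\partial_s^{i}\phi}_{2}\llll{\partial_s^{i+1}\phi}_{2}$ — this is exactly the intermediate bound appearing inside the proof of Lemma~\ref{AppendixLemma2}, before Lemma~\ref{AppendixLemma1} is invoked; here $\partial_s^{i}\phi$ has zero mean automatically for $i\ge 1$, and for $i=0$ one uses that $\phi$ has zero mean in the application (otherwise a harmless extra $c\llll{\phi}_{2}$ is carried along). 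Combining the two gives $\llll{\partial_s^{i}\phi}_{l}\le c\,\llll{\partial_s^{i}\phi}_{2}^{1/2+1/l}\llll{\partial_s^{i+1}\phi}_{2}^{1/2-1/l}$, with the two exponents summing to $1$.

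The second ingredient is the pure $L^{2}$ interpolation $\llll{\partial_s^{j}\phi}_{2}\le\llll{\phi}_{2}^{1-j/K}\llll{\partial_s^{K}\phi}_{2}^{j/K}$ for $0\le j\le K$, which is the log-convexity of $j\mapsto\log\llll{\partial_s^{j}\phi}_{2}$; I would get it from Fourier series (H\"older on the frequency sum), or equivalently by iterating $\llll{\partial_s^{j}\phi}_{2}^{2}\le\llll{\partial_s^{j-1}\phi}_{2}\llll{\partial_s^{j+1}\phi}_{2}$ — integration by parts plus Cauchy--Schwarz — which is the mechanism of Lemma~\ref{AppendixLemma0}. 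Applying it with $j=i$ and $j=i+1$ (legitimate since $i<K$ forces $i+1\le K$) and substituting into the previous display, the exponent of $\llll{\partial_s^{K}\phi}_{2}$ collapses to $\big(\tfrac12+\tfrac1l\big)\tfrac iK+\big(\tfrac12-\tfrac1l\big)\tfrac{i+1}{K}=\tfrac{i+\frac12-\frac1l}{K}=\alpha$ and that of $\llll{\phi}_{2}$ to $1-\alpha$. Bounding $\llll{\partial_s^{K}\phi}_{2}\le\llll{\phi}_{K,2}$ and scaling back gives \eqref{AppendixLemma3,1}. The consequence \eqref{AppendixLemma3,2} is then immediate with $\phi=\kappa-\bar{\kappa}$, which has zero mean and satisfies $L^{(1-\alpha)/2}\oo{\intcurve{(\kappa-\bar{\kappa})^{2}}}^{(1-\alpha)/2}=\oo{L\intcurve{(\kappa-\bar{\kappa})^{2}}}^{(1-\alpha)/2}=K_{osc}^{(1-\alpha)/2}$.

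I expect the only real difficulty to be bookkeeping rather than ideas: carefully verifying the scaling weights in the reduction, checking that the two interpolation steps combine to reproduce $\alpha$ exactly, and — if one wants the statement in fully clean form — tracking the spurious lower-order terms (the $i=0$ case, the $l=\infty$ endpoint) and confirming they are dominated by $\llll{\phi}_{K,2}$. Since the inequality is exactly Lemma~$2.4$ of \cite{DKS}, one could alternatively just cite it.
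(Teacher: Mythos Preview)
Your proposal is correct. The paper itself does not give an independent argument here: its entire proof is the sentence ``The proof is identical to that of Lemma~$2.4$ from \cite{DKS} and is of a standard interpolative nature,'' together with the remark that $\kappa-\bar\kappa$ replaces the $\kappa\nu$ of \cite{DKS}. Your write-up is exactly a fleshing-out of that standard interpolation argument --- scale to $L=1$, pass from $L^l$ to $L^2$ via H\"older and the Agmon-type bound from the proof of Lemma~\ref{AppendixLemma2}, then use $L^2$ log-convexity of $\|\partial_s^j\phi\|_2$ (the mechanism behind Lemma~\ref{AppendixLemma0}) --- and the exponents do combine to $\alpha=(i+\tfrac12-\tfrac1l)/K$ as you compute. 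Your closing remark that one could simply cite \cite{DKS} is precisely what the paper does.

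One small bookkeeping slip: in your scaling check you assert that the factor $L^{(1-\alpha)/2}\big(\int_\gamma\phi^2\,ds\big)^{(1-\alpha)/2}$ and $\llll{\phi}_{K,2}$ are ``each homogeneous of degree $1$.'' The second is, but the first is homogeneous of degree $1-\alpha$; the point is rather that $\llll{\phi}_{K,2}^{\alpha}$ has degree $\alpha$, so the right-hand side as a whole has degree $1$, matching the left. This does not affect the argument.
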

\begin{proof}
The proof is identical to that of Lemma $2.4$ from \cite{DKS} and is of a
standard interpolative nature. Note that although we use $k-\bar{k}$ in the
identity (as opposed to \cite{DKS} where $k\nu$ is used).
\end{proof}
\begin{lemma}[Proposition $2.5$, \cite{DKS}]\label{AppendixLemma4}
Let $\gamma:\mathbb{S}^{1}\rightarrow\mathbb{R}^{2}$ be a smooth closed curve. Let $\phi:\mathbb{S}^{1}\rightarrow\mathbb{R}$ be a sufficiently smooth function. Then for any term $P_{\nu}^{\mu}\oo{\phi}$ (where $P_{\nu}^{\mu}\oo{\cdot}$ denotes the same $P$-style notation used in for example \cite{DKS}) with $\nu\geq2$ which contains only derivatives of $\kappa$ of order at most $K-1$, we have
\begin{equation}
\intcurve{\norm{P_{\nu}^{\mu}\oo{\phi}}}\leq c\oo{K,\mu,\nu}L^{1-\mu-\nu}\oo{L\intcurve{\phi^{2}}}^{\frac{\nu-\eta}{2}}\llll{\phi}_{K,2}^{\eta}.\label{AppendixLemma4,1}
\end{equation}
In particular, for $\phi=\kappa-\bar{\kappa}$ we have the estimate
\begin{equation}
\intcurve{\norm{P_{\nu}^{\mu}\oo{\kappa-\bar{\kappa}}}}\leq c\oo{K,\mu,\nu}L^{1-\mu-\nu}\oo{K_{osc}}^{\frac{\nu-\eta}{2}}\llll{\kappa-\bar{\kappa}}_{K,2}^{\eta}\label{AppendixLemma4,2}
\end{equation}
where $\eta=\frac{\mu+\frac{\nu}{2}-1}{K}$.
\end{lemma}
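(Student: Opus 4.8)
The plan is to reduce the estimate to the case of a single monomial and then to apply the interpolation inequality of Lemma~\ref{AppendixLemma3} factor by factor, with the H\"older exponents chosen so that the resulting powers of $L$, of $\intcurve{\phi^{2}}$ and of $\llll{\phi}_{K,2}$ assemble precisely into the right-hand side of \eqref{AppendixLemma4,1}. (One may alternatively invoke Proposition~$2.5$ of \cite{DKS} verbatim; we describe the argument for the reader.)

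First I would expand $P_{\nu}^{\mu}\oo{\phi}$ as a finite linear combination, with coefficients depending only on $K,\mu,\nu$, of monomials $\partial_{s}^{i_{1}}\phi\star\cdots\star\partial_{s}^{i_{\nu}}\phi$ with $i_{1}+\cdots+i_{\nu}=\mu$ and $0\le i_{j}\le K-1$; by linearity and the triangle inequality it suffices to bound $\intcurve{\norm{\partial_{s}^{i_{1}}\phi\star\cdots\star\partial_{s}^{i_{\nu}}\phi}}$. Since $\phi$ is scalar valued the $\star$-products are, up to sign, ordinary products, so H\"older's inequality with the $\nu$ conjugate exponents all equal to $\nu$ (admissible because $\nu\ge2$) gives
\[
\intcurve{\norm{\partial_{s}^{i_{1}}\phi\star\cdots\star\partial_{s}^{i_{\nu}}\phi}}\le c\prod_{j=1}^{\nu}\oo{\intcurve{\norm{\partial_{s}^{i_{j}}\phi}^{\nu}}}^{1/\nu}.
\]

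Next I would apply Lemma~\ref{AppendixLemma3} with $l=\nu$ and $i=i_{j}$ to each factor. Writing $\alpha_{j}:=(i_{j}+\tfrac12-\tfrac1\nu)/K$ — which lies in $[0,1)$ because $\nu\ge2$ and $i_{j}\le K-1$ — this yields
\[
\oo{\intcurve{\norm{\partial_{s}^{i_{j}}\phi}^{\nu}}}^{1/\nu}\le c\,L^{-(i_{j}+1-1/\nu)+\frac{1-\alpha_{j}}{2}}\oo{\intcurve{\phi^{2}}}^{\frac{1-\alpha_{j}}{2}}\llll{\phi}_{K,2}^{\alpha_{j}}.
\]
The bookkeeping is then immediate: since $\sum_{j}\tfrac1\nu=1$ one gets $\sum_{j}\alpha_{j}=(\mu+\tfrac\nu2-1)/K=\eta$, hence $\sum_{j}\tfrac{1-\alpha_{j}}{2}=\tfrac{\nu-\eta}{2}$, while the exponent of $L$ collected from the $\nu$ factors is $-\sum_{j}(i_{j}+1-\tfrac1\nu)+\tfrac{\nu-\eta}{2}=-(\mu+\nu-1)+\tfrac{\nu-\eta}{2}$. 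Multiplying the $\nu$ estimates together therefore produces exactly $c\,L^{1-\mu-\nu}\oo{L\intcurve{\phi^{2}}}^{\frac{\nu-\eta}{2}}\llll{\phi}_{K,2}^{\eta}$, which is \eqref{AppendixLemma4,1}; specialising to $\phi=\kappa-\bar{\kappa}$ and using $L\intcurve{\oo{\kappa-\bar{\kappa}}^{2}}=K_{osc}$ gives \eqref{AppendixLemma4,2}.

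There is no serious obstacle: the content is the exponent accounting just carried out, together with the routine admissibility checks ($\alpha_{j}\in[0,1)$ and $\eta<\nu$, both reducing to $\nu\ge2$ and $i_{j}\le K-1$) and the observation that Lemma~\ref{AppendixLemma3} applies without change to factors with $i_{j}=0$. This is precisely why the statement is standard and may be cited directly from \cite{DKS}; the only mild care needed is to ensure the constant depends only on $K,\mu,\nu$, which it does since the number of monomials in $P_{\nu}^{\mu}$ and all of the $\alpha_{j}$ are determined by $K,\mu,\nu$ alone.
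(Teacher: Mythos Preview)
Your argument is correct and is essentially identical to the paper's own proof: both expand $P_\nu^\mu(\phi)$ into monomials, apply H\"older with all exponents equal to $\nu$, invoke Lemma~\ref{AppendixLemma3} with $l=\nu$ on each factor, and then sum the $\alpha_j$ to obtain $\eta$. Your exponent bookkeeping is in fact more explicit than the paper's, which simply factors out $L^{1-\mu-\nu}$ before applying Lemma~\ref{AppendixLemma3} and leaves the verification to the reader.
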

\begin{proof}
Using H\"{o}lder's inequality and Lemma $\ref{AppendixLemma3}$ with $K=\nu$, if $\sum_{j=1}^{\nu}i_{j}=\mu$ we have
\begin{align}
&\intcurve{\norm{\phi_{s^{i_{1}}}\star\cdots\star\phi_{s^{i_{\nu}}}}}\nonumber\\
&\leq\prod_{j=1}^{\nu}\oo{\intcurve{\phi_{s^{i_{j}}}^{\nu}}}^{\frac{1}{\nu}}=L^{1-\mu-\nu}\prod_{j=1}^{\nu}L^{i_{j}+1-\frac{1}{\nu}}\oo{\intcurve{\phi_{s^{i_{j}}}^{\nu}}}^{\frac{1}{\nu}}\nonumber\\
&\leq c\oo{K,\mu,\nu}L^{1-\mu-\nu}\prod_{j=1}^{\nu}\oo{L\intcurve{\phi^{2}}}^{\frac{1-\alpha_{j}}{2}}\llll{\phi}_{K,2}^{\alpha_{j}}\label{AppendixLemma4,3}
\end{align}
where $\alpha_{j}=\frac{i_{j}+\frac{1}{2}-\frac{1}{\nu}}{K}$. Now
\[
\sum_{j=1}^{\nu}\alpha_{j}=\frac{1}{K}\sum_{j=1}^{\nu}\oo{i_{j}+\frac{1}{2}-\frac{1}{\nu}}=\frac{\mu+\frac{\nu}{2}-1}{K}=\eta,
\]
ans so substituting this into $\oo{\ref{AppendixLemma4,3}}$ gives the first inequality of the lemma. It is then a simple matter of substituting $\phi=\kappa-\bar{\kappa}$ into this result to prove statement $\oo{\ref{AppendixLemma4,2}}$.
\end{proof}
\begin{lemma}[\cite{DKS}]\label{AppendixLemma5}
Let $\gamma:\mathbb{S}^{1}\rightarrow\mathbb{R}^{2}$ be a smooth closed curve and $\phi:\mathbb{S}^{1}\rightarrow\mathbb{R}$ a sufficiently smooth function.
Then for any term $P_{\nu}^{\mu}\oo{\phi}$ with $\nu\geq2$ which contains only derivatives of $\kappa$ of order at most $K-1$, we have for any $\varepsilon>0$
\begin{equation}
\intcurve{\norm{P_{\nu}^{\mu,K-1}\oo{\phi}}}\leq c\oo{K,\mu,\nu}L^{1-\mu-\nu}\oo{L\intcurve{\phi^{2}}}^{\frac{\nu-\eta}{2}}\oo{L^{2K+1}\intcurve{\phi_{s^{K}}^{2}}+L\intcurve{\phi^{2}}}^{\frac{\eta}{2}}.\label{AppendixLemma5,1}
\end{equation}
Moreover if $\mu+\frac{1}{2}\nu<2K+1$ then $\eta<2$ and we have for any $\varepsilon>0$
\begin{equation}
\intcurve{\norm{P_{\nu}^{\mu,K-1}\oo{\phi}}}\leq\varepsilon\intcurve{\phi_{s^{K}}^{2}}+c\cdot\varepsilon^{-\frac{\eta}{2-\eta}}\oo{\intcurve{\phi^{2}}}^{\frac{\nu-\eta}{2-\eta}}+c\oo{\intcurve{\phi^{2}}}^{\mu+\nu-1}.\label{AppendixLemma5,2}
\end{equation}
In particular, for $\phi=\kappa-\bar{\kappa}$, we have the estimate
\[
\intcurve{\norm{P_{\nu}^{\mu}\oo{k-\bar{k}}}}\leq c\oo{K,\mu,\nu}L^{1-\mu-\nu}\oo{K_{osc}}^{\frac{\nu-\eta}{2}}\oo{L^{2K+1}\intcurve{\oo{k-\bar{k}}_{s^{K}}^{2}}}^{\frac{\eta}{2}}.
\]
Here, as before, $\eta=\frac{\mu+\frac{\nu}{2}-1}{K}$. 
\end{lemma}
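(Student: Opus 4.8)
The plan is to obtain Lemma \ref{AppendixLemma5} as a sharpening of Lemma \ref{AppendixLemma4}: the factor $\llll{\phi}_{K,2}^{\eta}$ appearing there will be replaced by the two ``endpoint'' quantities $L^{2K+1}\intcurve{\phi_{s^{K}}^{2}}$ and $L\intcurve{\phi^{2}}$ via a standard scale-invariant interpolation, and the resulting multiplicative bound will then be turned into an additive one by Young's inequality. Nothing here is more than careful Gagliardo--Nirenberg bookkeeping, all of which is in \cite{DKS}.

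First I would record the conclusion of Lemma \ref{AppendixLemma4},
\[
\intcurve{\norm{P_{\nu}^{\mu,K-1}\oo{\phi}}}\leq c\oo{K,\mu,\nu}L^{1-\mu-\nu}\oo{L\intcurve{\phi^{2}}}^{\frac{\nu-\eta}{2}}\llll{\phi}_{K,2}^{\eta},\qquad \eta=\tfrac{1}{K}\oo{\mu+\tfrac{\nu}{2}-1},
\]
and then prove the interpolation inequality
\[
\llll{\phi}_{K,2}^{2}\leq c\oo{K}\oo{L^{2K+1}\intcurve{\phi_{s^{K}}^{2}}+L\intcurve{\phi^{2}}}.
\]
To see this, apply Lemma \ref{AppendixLemma3} with $l=2$ to each summand $L^{i+\frac12}\oo{\intcurve{\phi_{s^{i}}^{2}}}^{1/2}$ of $\llll{\phi}_{K,2}$ with $0<i<K$; the interpolation exponent is then $\alpha=i/K\in\oo{0,1}$, so the right-hand side is $c\oo{K}\oo{L\intcurve{\phi^{2}}}^{(1-\alpha)/2}\llll{\phi}_{K,2}^{\alpha}$, and Young's inequality in the form $x^{1-\alpha}y^{\alpha}\leq\delta y+c_{\delta}x$ (any $\delta>0$) lets us absorb the finitely many $\llll{\phi}_{K,2}$-terms back into the left-hand side, leaving only the $i=0$ and $i=K$ summands. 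Substituting $\llll{\phi}_{K,2}^{\eta}=\oo{\llll{\phi}_{K,2}^{2}}^{\eta/2}$ into Lemma \ref{AppendixLemma4} then gives \eqref{AppendixLemma5,1} at once.

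For \eqref{AppendixLemma5,2}, note that the hypothesis $\mu+\tfrac12\nu<2K+1$ is exactly $\eta<2$, so $t\mapsto t^{\eta/2}$ is subadditive and $\oo{A+B}^{\eta/2}\leq A^{\eta/2}+B^{\eta/2}$ with $A=L^{2K+1}\intcurve{\phi_{s^{K}}^{2}}$ and $B=L\intcurve{\phi^{2}}$. In the $A$-contribution of \eqref{AppendixLemma5,1} all powers of $L$ cancel, precisely because $K\eta=\mu+\tfrac{\nu}{2}-1$, leaving $c\oo{\intcurve{\phi^{2}}}^{(\nu-\eta)/2}\oo{\intcurve{\phi_{s^{K}}^{2}}}^{\eta/2}$; Young's inequality with conjugate exponents $\tfrac{2}{2-\eta}$ and $\tfrac{2}{\eta}$ then produces $\varepsilon\intcurve{\phi_{s^{K}}^{2}}+c\,\varepsilon^{-\eta/(2-\eta)}\oo{\intcurve{\phi^{2}}}^{\frac{\nu-\eta}{2-\eta}}$. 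The $B$-contribution is a product of powers of $\intcurve{\phi^{2}}$ and $L$ which, after collecting the $L$-powers as in \cite{DKS}, is controlled by the lower-order term $c\oo{\intcurve{\phi^{2}}}^{\mu+\nu-1}$. Finally, for the ``in particular'' statement I would take $\phi=\kappa-\overline{\kappa}$: then $\intcurve{\phi}=0$ and, since $\phi_{s^{i}}$ is also mean zero for every $i\geq1$, iterating Lemma \ref{AppendixLemma1} $K$ times yields $L\intcurve{\phi^{2}}\leq (4\pi^{2})^{-K}L^{2K+1}\intcurve{\phi_{s^{K}}^{2}}$, i.e. $B\leq A$; hence the bracket in \eqref{AppendixLemma5,1} collapses to its top-order term and $L\intcurve{\phi^{2}}=K_{osc}$ by definition.

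The genuinely delicate point is none of the individual inequalities --- each is elementary --- but the scale-invariant bookkeeping: keeping every power of $L$ correct through Lemmas \ref{AppendixLemma3}--\ref{AppendixLemma4}, checking that the absorption step is legitimate despite $\llll{\phi}_{K,2}$ appearing on both sides of Lemma \ref{AppendixLemma3}, and verifying that the identity $K\eta=\mu+\tfrac{\nu}{2}-1$ is exactly what makes the $L$-powers in the $A$-term disappear. Modulo these routine computations the lemma follows.
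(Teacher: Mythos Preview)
Your proposal is correct and follows essentially the same route as the paper: start from Lemma~\ref{AppendixLemma4}, reduce $\llll{\phi}_{K,2}$ to the endpoint terms via the interpolation inequality $\llll{\phi}_{K,2}^{2}\leq c(K)\big(L^{2K+1}\intcurve{\phi_{s^{K}}^{2}}+L\intcurve{\phi^{2}}\big)$, then split and apply Young's inequality for \eqref{AppendixLemma5,2}, and finally iterate Wirtinger (Lemma~\ref{AppendixLemma1}) for the $\phi=\kappa-\bar\kappa$ case. The only cosmetic difference is that the paper obtains the endpoint interpolation by iterating Lemma~\ref{AppendixLemma0} (citing \cite{Aubin1}), whereas you obtain it by applying Lemma~\ref{AppendixLemma3} with $l=2$ and absorbing; both derivations are standard and equivalent.
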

\begin{proof}
Combining the previous lemma with the following standard interpolation inequality from that follows from repeated applications of Lemma $\ref{AppendixLemma0}$ (and is also found in \cite{Aubin1})
\[
\llll{\phi}_{K,2}^{2}\leq c\oo{K}\oo{L^{2K+1}\intcurve{\phi_{s^{K}}^{2}}+L\intcurve{\phi^{2}}}
\]
yields the identity $\oo{\ref{AppendixLemma5,1}}$ immediately. To prove $\oo{\ref{AppendixLemma5,2}}$ we simply combine $\oo{\ref{AppendixLemma5,1}}$ with the Cauchy-Schwarz identity. The final identity of the Lemma follow by letting $\phi=\kappa-\bar{\kappa}$ in $\oo{\ref{AppendixLemma5,1}}$ and combining this with the identity
\begin{equation}
K_{osc}\leq L\oo{\frac{L^{2}}{4\pi^{2}}}^{K}\intcurve{\oo{\kappa-\bar{\kappa}}_{s^{K}}^{2}}= c\oo{K}L^{2K+1}\intcurve{\oo{\kappa-\bar{\kappa}}_{s^{K}}^{2}},\label{AppendixLemma5,3}
\end{equation}
which is a direct consequence of applying Lemma $\ref{AppendixLemma1}$ $\oo{p+1}$ times repeatedly.
\end{proof}

\begin{theorem}[\cite{Breuning1}, Theorem $1.1$]\label{AppendixTheorem1}
Let $q\in\mathbb{R}^{n}$, $m,p\in\mathbb{N}$ with $p>m$. Additionally, let $\mathcal{A},\mathcal{V}>0$ be some fixed constants. Let $\mathfrak{T}$ be the set of all mappings $f:\Sigma:\rightarrow\mathbb{R}^{n}$ with the following properties:
\begin{itemize}
\item $\Sigma$ is an $m$-dimensional, compact manifold (without boundary)

\item $f$ is an immersion in $W^{2,p}\oo{\Sigma,\mathbb{R}^{n}}$ satisfying
\begin{align*}
\llll{A\oo{f}}_{p}&\leq\mathcal{A},\\
\text{vol}\oo{\Sigma}&\leq\mathcal{V},\text{  and  }\\
q&\in f\oo{\Sigma}.
\end{align*}
\end{itemize}
Then for every sequence $f^{i}:\Sigma^{i}\rightarrow\mathbb{R}^{n}$ in $\mathfrak{T}$ there is a subsequence $f^{j}$, a mapping $f:\Sigma\rightarrow\mathbb{R}^{n}$ in $\mathfrak{T}$ and a sequence of diffeomorphisms $\phi^{j}:\Sigma\rightarrow\Sigma^{j}$ such that $f^{j}\circ\phi^{j}$ converges in the $C^{1}$-topology to $f$.
\end{theorem}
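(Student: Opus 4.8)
The plan is to run Langer's compactness scheme in the $W^{2,p}$ formulation of \cite{Breuning1}: produce \emph{uniform} local graph representations, upgrade the volume and curvature bounds into a uniform diameter bound, reduce every $f\in\mathfrak{T}$ to a uniformly finite atlas of graph patches, and then extract convergent subsequences patch by patch and glue. First I would show there are $r_0=r_0(m,n,p,\mathcal{A})>0$ and $C_0=C_0(m,n,p,\mathcal{A})$ such that for every $f\in\mathfrak{T}$ and every $x\in\Sigma$, after translating $f(x)$ to $0$ and rotating $T_xf$ onto $\mathbb{R}^m\times\{0\}$, the set $f(\Sigma)$ is near $0$ the graph of a map $u\colon B_{r_0}^m(0)\to\mathbb{R}^{n-m}$ with $u(0)=0$, $Du(0)=0$ and $\vn{u}_{C^{1,1-m/p}(B_{r_0})}\le C_0$. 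The mechanism is that where $|Du|$ is small the second fundamental form controls $D^2u$ up to a factor bounded in terms of $|Du|$, so a continuity argument started at $x$ keeps the graph condition alive on a ball whose radius is bounded below using only $\vn{A(f)}_p\le\mathcal{A}$ and $p>m$; the ensuing $W^{2,p}$ bound on $u$ self-improves via Morrey's embedding $W^{2,p}\hookrightarrow C^{1,1-m/p}$.

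Next I would use the graph representation to obtain a volume lower bound $\mathcal{H}^m(f(\Sigma)\cap B^n_\rho(f(x)))\ge c\rho^m$ for all $\rho\le r_1$, with $c,r_1>0$ uniform. Were the extrinsic diameter of $f(\Sigma)$ to exceed a threshold $D=D(m,n,p,\mathcal{A},\mathcal{V})$, one could string $\lceil D/(2r_1)\rceil$ points of $f(\Sigma)$ along a near-diametral chord at mutual distance $\ge 2r_1$, whose $r_1$-balls are disjoint and each carry volume $\ge cr_1^m$, contradicting $\mathrm{vol}(\Sigma)\le\mathcal{V}$; since $q\in f(\Sigma)$, this pins $f(\Sigma)\subset\overline{B^n_D(q)}$ for \emph{every} $f\in\mathfrak{T}$. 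Covering $\overline{B^n_D(q)}$ by $N=N(D,r_0)$ balls of radius $r_0/4$ then produces, via the first step, a cover of $\Sigma$ by at most $N$ patches, each $C^{1,\beta}$-parametrised over $B^m_{r_0}$ with uniformly controlled transition maps --- an atlas whose nerve ranges over a finite set of simplicial complexes independent of $f$.

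Then, given a sequence $f^i$, I would pass first to a subsequence along which the nerve is a \emph{fixed} complex; on each of the boundedly many patches the graph functions are bounded in $C^{1,\beta}$, so Arzel\`a--Ascoli and a diagonal argument give a further subsequence converging in $C^1$, and weakly-$*$ in $W^{2,p}$, on every patch, transition maps included. Gluing the limit graphs along the fixed nerve yields a compact boundaryless $m$-manifold $\Sigma$ and a map $f\in W^{2,p}\cap C^{1,\beta}$, and recording the patch identifications yields diffeomorphisms $\phi^j\colon\Sigma\to\Sigma^j$ with $f^j\circ\phi^j\to f$ in $C^1$. To see $f\in\mathfrak{T}$: persistence of $|Du|<1$ keeps $df$ of full rank, so $f$ is an immersion; weak $W^{2,p}$ lower semicontinuity of $\vn{\cdot}_p$ (the second fundamental form depending continuously on $u,Du$ and linearly on $D^2u$) gives $\vn{A(f)}_p\le\mathcal{A}$; $C^1$-convergence of the induced metrics, read off through a partition of unity, gives $\mathrm{vol}(\Sigma)\le\mathcal{V}$; and picking $x_j$ with $f^j(\phi^j(x_j))\equiv q$ and a convergent subsequence $x_j\to x$ gives $f(x)=q$.

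The hard part is the first step: converting an $L^p$ bound on $A$ into a \emph{uniform} graphical radius and $C^{1,\beta}$ bound, which is where the continuity/ODE control of $|Du|$ and the borderline embedding at $p>m$ are really needed. The diameter bound is the global ingredient that makes a finite atlas possible at all, and reconciling the varying nerves along the sequence is the main bookkeeping burden; all of this is carried out in \cite{Breuning1}.
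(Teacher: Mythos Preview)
The paper does not prove this statement at all: Theorem~\ref{AppendixTheorem1} is simply quoted from \cite{Breuning1} (Theorem~1.1 there) and used as a black box in the proof of Corollary~\ref{LongTimeCorollary3}. So there is no ``paper's own proof'' to compare against.

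That said, your sketch is a faithful outline of the Langer--Breuning compactness scheme that \cite{Breuning1} actually carries out: uniform local $W^{2,p}$ graph representations with radius depending only on $\mathcal{A}$ and the Morrey exponent, a diameter bound from the volume constraint via disjoint-ball packing, reduction to a finite combinatorial type for the nerve, and patchwise Arzel\`a--Ascoli plus gluing. The identification of the ``hard part'' as the uniform graphical radius from an $L^p$ curvature bound with $p>m$ is accurate. For the purposes of this paper nothing more than the citation is required, but if you wanted to include a sketch, yours captures the correct architecture.
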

\end{section}

\bibliographystyle{plain}

\bibliography{PFBib}

\end{document}